\documentclass[twoside,11pt]{article}
\usepackage[english]{babel}

\usepackage{amssymb,amsmath,mathrsfs,amsthm,indentfirst,graphicx,float,epstopdf}
\usepackage{cases}
\usepackage[numbers,sort&compress]{natbib}
\usepackage{subcaption}
\usepackage{enumerate}
\usepackage{enumitem} 
\usepackage{caption}
\usepackage{accents}
\allowdisplaybreaks  

\usepackage[dvipsnames]{xcolor}
\usepackage{tikz}
\usepackage{xxcolor}
\usepackage{tikz}
\usetikzlibrary{decorations.markings,backgrounds}
\usetikzlibrary{arrows.meta}
\usepackage{pgfplots}
\pgfplotsset{compat=1.17}
\usepackage[colorlinks=true]{hyperref}
\hypersetup{
	colorlinks,
	linkcolor={red!80!black},
	citecolor={blue!90!black},
	urlcolor={blue!80!black}
}
\usepackage{diagbox}

\usepackage{stmaryrd}

\newtheorem{The}{Theorem}[section]
\newtheorem{Lem}[The]{Lemma}
\newtheorem{Cor}[The]{Corollary}

\newtheorem{Pro}[The]{Proposition}

\theoremstyle{definition}
\newtheorem{Rem}{Remark}[section]

\graphicspath{{fig/}}

\numberwithin{equation}{section}
\numberwithin{figure}{section}

\begin{document}
	\captionsetup[figure]{labelfont={default},labelformat={default},labelsep=period,name={Fig.}}
	
	\title{\bf New Traffic Flow Model with Nonlinear Anticipation and Intelligent-control Boundary: Existence, Long-time Behavior and Large-relaxation-time Limit
    \vspace{-6mm}
		\footnotetext{\small
			*Corresponding author.}
		\footnotetext{\small E-mail addresses: zbhu@jxnu.edu.cn (Z. Hu), lisz@jxnu.edu.cn (S. Li), ming.mei@mcgill.ca (M. Mei), zejiawang@jxnu.edu.cn (Z. Wang).}}
	\author{{Zhibin Hu$^{1}$, Songzhi Li$^{1,\ast}$, Ming Mei$^{1,2,3}$, and Zejia Wang$^1$}\\[1mm]
		\small\it $^1$School of Mathematics and Statistics,  Jiangxi Normal University,\\
		\small\it    Nanchang, 330022, China \\
		\small\it $^2$Department of Mathematics, Champlain College Saint-Lambert,\\
		\small\it     Saint-Lambert, Quebec, J4P 3P2, Canada\\
		\small\it $^3$Department of Mathematics and Statistics, McGill University,\\
		\small\it     Montreal, Quebec, H3A 2K6, Canada
	}
	
	\date{}
	\maketitle
	
	\begin{quote}
		\small \textbf{Abstract}: In this paper, we propose a novel physical model for traffic flow incorporating nonlinear anticipation effects and an intelligent-control boundary, mathematically formulated as a damping boundary condition:
		\begin{align*}
			\begin{cases}
				u_t^\tau+v_x^\tau=0, & x\in(0,1),\; t>0,\\[1mm]
				v_t^\tau+g(u_x^\tau)u_x^\tau=\dfrac{f(u^\tau)-v^\tau}{\tau}, & x\in(0,1),\; t>0,\\[1mm]
				(u^\tau,v^\tau)(x,0)=(u_0^\tau(x),v_0^\tau(x)), & x\in(0,1),\\[1mm]
				u_x^\tau(0,t)=0,\quad u_x^\tau(1,t)=-ku_t^\tau(1,t), & k>0,\; t>0,
			\end{cases}
		\end{align*}
		where $f$ and $g$ are smooth functions satisfying suitable structural assumptions, and $\tau>0$ is the relaxation time. The primary objective is to rigorously investigate how the intelligent-control  boundary suppresses the stop-and-go phenomenon in the large-relaxation-time regime-a mechanism that has not been mathematically addressed in previous studies. Utilizing the energy method, we establish the global well-posedness and exponential time-decay of solutions for the original system under arbitrarily large initial data with small spatial derivatives. Furthermore, we analyze the asymptotic behavior in the large-relaxation-time limit $\tau\to\infty$, by introducing a novel technique that incorporates constant shifts into the initial data of the limiting system. This approach enables us to construct a modified auxiliary system, through which we successfully obtain the global convergence of the original solutions to the asymptotic profiles for all time $t$ as relaxation time $\tau\to\infty$. Numerical simulations further demonstrate that in the large-relaxation-time regime, the stop-and-go density waves emerging at the early stage are gradually suppressed by the damping boundary. This leads the traffic stream to eventually evolve into an essentially uniform profile, which perfectly validates our theoretical results.
		
		\indent \textbf{Keywords}: Traffic flow model, global existence, long-time behavior, large-relaxation-time limit
		
		\indent \textbf{AMS (2020) Subject Classification}: 35L60, 35L65, 35B40
	\end{quote}

	\tableofcontents
	
	\thispagestyle{empty}

	\section{Introduction}
	{\bf New traffic flow model}. Macroscopic traffic flow models describe the spatio-temporal evolution of vehicle density $u(x,t)$ and vehicle flow rate $v(x,t)$ on a single road without sources or sinks. Their classification hinges on whether the flow is assumed to be instantaneously determined by the density. First-order (equilibrium) models close the conservation law by prescribing the flow as a function of the density alone, $v=f(u)$. The prototypical example is the Lighthill-Whitham-Richards (LWR) model \cite{LW-1955,R-1956}, which represents the starting point for the mathematical modeling of vehicular flows. Second-order (non--equilibrium) models relax the instantaneous equilibrium assumption by treating the velocity (or the flow) as an independent dynamic variable coupled to the density through an additional partial differential equation. Prominent examples include the Payne-Whitham model \cite{P-1971,Whitham-1974} and the Aw-Rascle-Zhang model \cite{AR-2000,Z-2002}.
	
	In the present work, we adopt the viewpoint that the flow rate $v$ itself---rather than the velocity---is the natural non-equilibrium variable, and we postulate an evolution equation that captures the physical mechanisms governing its rate of change.
	
	\begin{itemize}
		\item \textbf{Relaxation mechanism.} In a spatially uniform traffic stream ($u_x=0$), drivers only react to the local density. Due to finite reaction and vehicle adaptation times, the actual flow cannot instantaneously match the equilibrium value $f(u)$. Instead, it relaxes toward it at a rate proportional to the current deviation:
		\[
		\left.\frac{\partial v}{\partial t}\right|_{\text{relax}} = \frac{f(u)-v}{\tau},
		\]
		where $\tau>0$ is the relaxation time constant. Physically, the relaxation time implies that, when subjected to acceleration/deceleration disturbances (e.g., braking vehicles ahead, traffic congestion, road bottlenecks), drivers cannot instantly adjust to the target speed; driving response inertia exists. This mechanism can be derived from microscopic car-following models in the homogeneous limit. In the optimal velocity model \cite{BHNSS-1995}, the acceleration of the $n$-th vehicle satisfies $\dot{v}_n = \tau^{-1}[V(\Delta x_n) - v_n]$, where $V$ is the optimal velocity function. Under uniform spacing and velocity, the headway $\Delta x_n$ is constant and related to the macroscopic density by  $\Delta x \approx 1/u$. Passing to the continuum limit and identifying the equilibrium flow as $f(u) = u \cdot V(1/u)$, the microscopic dynamics are exactly reduced to the above relaxation law. The time scale $\tau$ thus encapsulates the aggregate effect of perception delay, decision-making, and vehicle dynamics.
		
		\item \textbf{Anticipatory response.} When the density is non-uniform ($u_x\neq0$), drivers observe the downstream profile and proactively adjust their speed in addition to the relaxation process: an increasing density ahead $u_x>0$ induces a deceleration tendency, whereas a decreasing density $u_x<0$ encourages acceleration. This anticipatory behavior is modeled as an additional contribution to the rate of change of the flow:
		\[
		\left.\frac{\partial v}{\partial t}\right|_{\text{anticip}} = -g(u_x)u_x,
		\]
		where $g(\cdot)>0$ is a sensitivity function. When $g$ is constant, this term reduces to the gradient correction found in the classical Payne--Whitham model, which can be obtained from the optimal velocity model by expanding the headway to first order \cite{BHNSS-1995,P-1971,Whitham-1974}. The present work generalizes this by allowing $g$ to depend on $u_x$, motivated by the physical consideration that drivers may react more or less strongly depending on the steepness of the density gradient.
	\end{itemize}
	
	The conservation of vehicles provides the first governing equation
	\begin{align}\label{eq:cons}
		u_t + v_x = 0.
	\end{align}
	Assuming that the relaxation mechanism and the anticipatory response act simultaneously and their contributions superpose, the total rate of change of the flow is as follows,
	\[
	\frac{\partial v}{\partial t} = \frac{f(u)-v}{\tau} - g(u_x)u_x,
	\]
	which yields the second governing equation,
	\begin{align}\label{eq:second}
		v_t + g(u_x)u_x = \frac{f(u)-v}{\tau}.
	\end{align}
	This superposition hypothesis constitutes the central modeling postulate. When the gradient sensitivity is constant, $g(u_x)\equiv c_0^2>0$, the system reduces to the Payne--Whitham model \cite{P-1971,Whitham-1974} with the convection term neglected. The present work extends this structure by allowing the gradient sensitivity to depend on the gradient itself, i.e., $g=g(u_x)$. This generalization provides a flexible phenomenological framework that can capture more complex anticipatory behaviors while preserving the clear separation of roles between the relaxation time $\tau$ and the gradient sensitivity $g$.
	
	In summary, the focus of the present paper is to investigate the following conservation laws involving relaxation effect on the interval $x\in(0,1)$ with $t>0$:
	\begin{align}\label{a}
		\begin{cases}
			u^{\tau}_t+v^{\tau}_x=0,\\[1mm]
			v^{\tau}_t+g(u^{\tau}_x)u^{\tau}  _x=\frac{f\left( u^{\tau} \right) -v^{\tau}}{\tau},
		\end{cases}
		\quad x\in (0,1),t>0,
	\end{align}
	where $u^\tau(x,t)$ and $v^\tau(x,t)$ are the vehicle density and vehicle flow rate, respectively, the unknown functions of   $x$ and $t$ with respect to the parameter of relaxation time $\tau>0$, and $f(\cdot)$ is a  smooth function  and $g(\cdot)>0$ is a prescribed sensitivity function.

	To motivate the boundary conditions, we consider first the baseline case of reflecting ends, $u_x=0$ at both boundaries. An energy computation (cf. \eqref{1q1} below) shows that, in this case, density waves are barely dissipated when the relaxation time is large, so that stop-and-go patterns (cf. \cite{Fayolle-2026}) persist for a long time. To suppress this phenomenon, we impose the so-called intelligent-control bounaary $u_x(1,t)=-ku_t(1,t)$ at the exit, namely, the damping boundary, which keeps extracting energy from the system at the boundary, together with the Neumann condition $u_x(0,t)=0$ at the entrance. In summary, we study \eqref{a} subject to the initial data and the new proposed physical boundary conditions given by
	\begin{align}\label{b}
		\begin{cases}
			\left( u^{\tau},v^{\tau} \right) (x,0) =\left( u_0^\tau(x) ,v_0^\tau(x) \right) ,&		x\in (0,1),\\[1mm]
			u^{\tau}_x(0,t)=0,u^{\tau}_x(1,t)=-ku^{\tau}_t(1,t), &k>0, t>0,
		\end{cases}
	\end{align}
	where the boundary conditions presented in the above are the Neumann boundary at $x=0$, and the damping boundary at $x=1$, which   
	admit a clear physical interpretation in terms of on-ramp and off-ramp operations:
	\begin{itemize}
		\item {\bf Neumann boundary ($u_x=0$)} at the left boundary $x=0$. This enforces a spatially uniform density profile at the entrance. This corresponds to a steady, undisturbed inflow from an upstream on-ramp that feeds vehicles into the domain at a constant rate, without creating local accumulations or depletions.
		
		\item {\bf Intelligent-control boundary ($u_x=-ku_t$)} at the right boundary $x=1$. This links the spatial slope of the density to its local rate of change, thereby implementing an adaptive outflow regulation. When the exit density begins to rise ($u_t>0$), indicating that vehicles are starting to accumulate near the exit, the boundary induces a negative gradient $u_x<0$, meaning the density decreases further downstream. This gradient helps to draw vehicles out of the domain and relieves the emerging queue. Conversely, when the exit density is falling ($u_t < 0$), suggesting that vehicles are dispersing, a positive gradient $u_x>0$ is produced, which slightly increases the downstream density and prevents the mainline from emptying too quickly. The positive parameter $k$ measures the sensitivity of this regulation: a larger $k$ corresponds to a stronger response to density changes. In essence, the right boundary functions as an intelligent off-ramp controller:  it senses the local density trend in real time and adjusts the outflow in the opposite direction---releasing more vehicles when the density begins to accumulate, and gently restricting the outflow when the density tends to drop. 
	\end{itemize}
	
	Throughout this paper, we assume that the initial data satisfy the
	following compatibility conditions of order one. Let
	\begin{align}\label{comp-1}
h(s):=g'(s)s+g(s), \quad
		u^{(1)}:=-v_{0x}^{\tau},\quad
		u^{(2)}:=h(u_{0x}^{\tau})u_{0xx}^{\tau}
		+\frac{1}{\tau}\Big(v_{0x}^{\tau}-f'(u_0^{\tau})u_{0x}^{\tau}\Big),
	\end{align}
	so that $u^{(1)}=\partial_tu^\tau(\cdot,0)$ and
	$u^{(2)}=\partial_t^2u^\tau(\cdot,0)$. We assume
	\begin{align}\label{comp-2}
		u^{(0)}_{x}(0)=0,\quad u^{(1)}_{x}(0)=0,\quad
		u^{(0)}_{x}(1)=-ku^{(1)}(1),\quad u^{(1)}_{x}(1)=-ku^{(2)}(1),
	\end{align}
	where $u^{(0)}:=u_0^\tau$; equivalently, in terms of the original data,
	\[
	u_{0x}^{\tau}(0)=0,\quad v_{0xx}^{\tau}(0)=0,\quad
	u_{0x}^{\tau}(1)=kv_{0x}^{\tau}(1),\quad
	v_{0xx}^{\tau}(1)=ku^{(2)}(1).
	\]
	
	Moreover, we assume that the smooth functions $f(s)$ and $g(s)$ satisfy the following hypotheses:
	\begin{enumerate}[label=(H\textsubscript{\arabic*}),ref=H\textsubscript{\arabic*}]
		\item \label{h1} There exists a constant $\delta>0$ such that $g(s)\ge \delta$ for all $s\in\mathbb{R}$.
		\item \label{h2} $g'(s)s\ge 0$ for all $s\in\mathbb{R}$.
		\item \label{h3} $f(0)=0$.
	\end{enumerate}
	Typical examples include $g(s)=|s|^p+c$ with $c>0,\ p>0$, or $g(s)=e^{s^2}$. Assumption \eqref{h3} is physically natural, meaning that the equilibrium flow vanishes when the vehicle density is zero.
	
	{\bf Background of study}. Formally, the system \eqref{eq:cons}--\eqref{eq:second} belongs to the class of relaxation models of Jin--Xin type \cite{Jin-1995}. It couples a conservation law with a rate equation that describes the relaxation of a non-equilibrium variable toward an equilibrium manifold, a structure that has been extensively studied in the context of hyperbolic systems with relaxation. This connection enables us to leverage existing frameworks on the well-posedness, stability, and relaxation limit of such systems, and it places the present model within a broader mathematical framework that has already proven fruitful for the analysis of traffic flow and other continuum problems such as  river flows, chromatography, and kinetic theory \cite{Charke-1978,Chen-1995,Whitham-1974}. In 1987, T.-P. Liu \cite{Liu-1987} first established the fundamental theoretical framework for relaxation hyperbolic systems. Based on his work, Jin and Xin \cite{Jin-1995} proposed the following hyperbolic system with relaxation as follows:
	\begin{align}\label{1.1}
		\begin{cases}
			u^{\tau}_t+v^{\tau}_x=0,\\[2pt]
			v^{\tau}_t+au^{\tau}  _x=\frac{f\left( u^{\tau} \right) -v^{\tau}}{\tau},
		\end{cases}
		\quad \text{for}~\tau >0.
	\end{align}
	
	Regarding the well-posedness, the relaxation-time-limit, the existence and stability of nonlinear waves for the Cauchy problem or initial-boundary value problem of the Jin-Xin relaxation model  has been extensively studied. Some studies focus on the global existence of weak solutions or BV solutions \cite{Na-1996,Luo-2000,Amadori-2001,Zeng-2011}. The zero-relaxation-time limit of the system \eqref{1.1} was deeply studied by Chen-Levermore-Liu \cite{Chen-1994} and Chen-Liu \cite{Chen-1993}  and  Lattanzio-Pierangelo \cite{Lattanzio-1994} and Wang-Xin \cite{Wang-1998} and others \cite{Lattanzio-1997,Beltran-2026}.  For more relevant models on the zero-relaxation-time limit, we refer the reader to \cite{Feng-2020,Feng-2023,Peng-2011,Qiu-2002}, and for the large-relaxation-time limit, to \cite{Feng-2024-01,Feng-2024-02}. The numerical analysis and computations for relaxation time limits were systematically carried out by Jin and Xin \cite{Jin-1995}. Moreover, a series of studies were concerned with the existence for hyperbolic conservation laws with relaxation effects, such as the sharp viscous shock waves \cite{X-M-S-W-2025,H-M-W-2026}.  In addition, we refer to \cite{L-W-Y-1997,L-W-Y-1998,MC-1996,Mei-1998,Liu-2003,Zhang-2004} for the stability of viscous shock waves for the Jin-Xin system.  The convergence to the traveling wave solutions, as the relaxation time goes to zero, was recently considered by Jin and Liu \cite{Jin-1998}.  On the other hand, the asymptotic limit of relaxation time with a boundary effect was given in \cite{Mei-1999,Yang-2000,Hsiao-2000,Liu-2001,Hsiao-2001}. Regarding the stability of travelling waves to the IBVP for other models of hyperbolic conservation laws, we refer the reader to the interesting works in \cite{Li-2000,Liu-19971,Liu-19972,Matsumura-1999}.
	
	{\bf Main purpose}. In most of the above-mentioned works, the relaxation-time limits were studied only in the regime $\tau\to 0^+$. This means that drivers adjust their speed more rapidly, nearly reaching the equilibrium state instantaneously. 
	
	On the other hand, in the large-relaxation-time regime ($\tau\gg 1$), drivers adjust their speeds sluggishly, which renders the traffic flow prone to instability and to persistent stop-and-go density waves. Under reflecting boundary conditions ($u_x=0$ at both ends), such waves persist indefinitely. In contrast, the intelligent-control boundary  $u_x(1,t)=-ku_t(1,t)$ is specifically designed to suppress this undesirable phenomenon. However, to the best of our knowledge, intelligent‑control boundary conditions of this type---mathematically damping boundary conditions---have not yet been studied in this setting. In this paper, we focus on this regime. We establish the global well-posedness and exponential decay of solutions, assuming large initial data but small derivatives (Theorem~\ref{thm1}). More importantly, we investigate the large-relaxation-time limit: as $\tau\to\infty$, the solution converges uniformly in time to the shifted asymptotic profile of the limiting system, with a rate of $O(\tau^{-1})$ (Theorem~\ref{thm3}). Moreover, our analysis reveal that, even in this large-relaxation-time regime, the intelligent-control boundary keeps driving the traffic stream
	toward a spatially uniform equilibrium as time goes on, so that the stop-and-go phenomenon eventually disappears; the numerical simulations presented in Section \ref{sec5} illustrate this behavior.
	
	Formally taking the limit as $\tau\to\infty$ in \eqref{a}, and denoting the limiting functions by $(\bar{u},\bar{v})$, we obtain the following system for the asymptotic profile $(\bar{u},\bar{v})$:
	\begin{align}\label{2a}
		\begin{cases}
			\bar{u}_t+\bar{v}_x=0,\\[1mm]
			\bar{v}_t+g(\bar{u}_x)\bar{u} _x=0,
		\end{cases}
		\quad x\in (0,1),\; t>0,
	\end{align}
	subject to the initial-boundary conditions
	\begin{align}\label{bb}
		\begin{cases}
			(\bar{u},\bar{v})(x,0)=(\bar{u}_0(x),\bar{v}_0(x)), &x\in (0,1),\\[1mm]
			\bar{u}_{x}(0,t)=0,\qquad \bar{u}_{x}(1,t)=-k\bar{u}_t(1,t), &k>0,\; t>0.
		\end{cases}
	\end{align}
	We expect that the solutions $(u^\tau(x,t),v^\tau(x,t))$ of the original relaxation model \eqref{a} and \eqref{b} will converge to the corresponding asymptotic profile $(\bar{u}(x,t),\bar{v}(x,t))$ of the system \eqref{2a} and \eqref{bb} as $\tau\to\infty$ in the form of
	\[\| (u^{\tau}-\hat{u},v^{\tau}-\hat{v})(\cdot,t)\|_{L^{\infty}(0,1)}\le C\tau ^{-1},
	\]
	once the initial perturbation decays like:
	\[
	\|u_{0}^\tau-\hat{u}_0 \|_{H^1(0,1)}+\|v_{0}^\tau-\hat{v}_0\|_{H^1(0,1)}\leq C\tau^{-1}.
	\]
	Here,  the solutions $(u^\tau(x,t),v^\tau(x,t))$ are proved to  exist globally in time:
	\[\|(u^{\tau}-u^\tau_*,v^\tau-v^\tau_*)( \cdot,t)\|_{L^2(0,1)}\leq Ce^{-\sigma t},\]
	for some constant $\sigma>0$, where the constant pair  $(u^\tau_*,v^\tau_*)$ satisfies:
	\[
	u^\tau_*:=\int_0^1 u_0^\tau(x) dx + \int^\infty_0 [v^\tau(0,t)-v^\tau(1,t)]dt, \ \ \ v^\tau_*:=f(u^\tau_*);
	\]
	and   the solution $(\bar{u}(x,t),\bar{v}(x,t))$ is proved to  exist globally in time:
	\[\|(\bar{u}-\bar{u}_*,\bar{v}-\bar{v}_*)( \cdot,t)\|_{L^2(0,1)}\leq Ce^{-\gamma t},\]
	for some constant $\gamma>0$, where the constant pair $(\bar{u}_*,\bar{v}_*)$ satisfies:
	\[
	\bar{u}_*=\lim_{t\to\infty}\int_0^1 \bar{u}(x,t) dx, \ \ \ \ \bar{v}_*=\lim_{t\to\infty}\int_0^1 \bar{v}(x,t) dx,
	\]
	respectively.
	
	\textbf{Difficulties and strategies.}	However, when the relaxation time $\tau$ is sufficiently large, the dissipative mechanism is extremely weak or even disappears, which makes the study analytically challenging, and fundamentally different from the previous works. 
	Let us outline the primary difficulties of the study and  our strategies in the proof. 
	
	Firstly, the energy method for the original system \eqref{a}--\eqref{b} faces a fundamental difficulty: the boundary conditions in \eqref{b} prevent the use of the Poincar\'e inequality. As a result, the standard energy framework for wave equations-which controls only derivatives of the solution-fails, since it contains no term involving $u^\tau$ itself. However, the nonlinear source term $\frac{f(u^\tau)}{\tau}$ requires precisely an a priori bound on $u^\tau$ to close the estimate. To overcome this obstacle, we impose an {\it a priori} assumption that the spatial and temporal derivatives decay exponentially; from this we derive the missing bound on $u^\tau$. This not only closes the energy estimates but also yields, as a direct byproduct, the long-time asymptotic behavior of the solution.
	
	The second main difficulty is to establish the global convergence of $(u^\tau,v^\tau)$ for all time $t$ in the large-relaxation-time limit $\tau\to\infty$. Formally sending $\tau\to\infty$ in \eqref{a} yields the limit system \eqref{2a}--\eqref{bb}, which enjoys translational invariance: any constant shift of a solution, with the initial data shifted accordingly, remains a solution. This invariance introduces an ambiguity: the limit system alone does not determine which shifted solution serves as the asymptotic profile of $(u^\tau,v^\tau)$.
	
	Our key idea is to remove this ambiguity by choosing the shifts precisely as the limiting constants $\bar{u}_*$ and $\bar{v}_*$ provided by Theorem \ref{thm2}, specifically, we set 
	$(\hat{u},\hat{v}):=(\bar{u}-\bar{u}_*,\bar{v}-\bar{v}_*)$, and $(\hat{u}_0,\hat{v}_0):=(\bar{u}_0-\bar{u}_*,\bar{v}_0-\bar{v}_*)$.

	By Theorem \ref{thm2}, $(\hat{u},\hat{v})$ decays exponentially in time to zero. This seemingly simple translation is the central technical innovation: it aligns the asymptotic profile with the actual limit of the original system. Moreover, combined with the physical condition (\ref{h3}), the exponential decay of $\hat{u}$ implies that $f(\hat{u})$ also decays exponentially, which is precisely the property needed to control the nonlinear source term $\frac{f(u^\tau)-v^\tau}{\tau}$ and close the convergence estimates. Thus, the translational invariance, which initially poses an ambiguity, is turned into a powerful tool that enables the full convergence proof.
	
	{\bf Notations.} In this paper, the symbol $C$ is a generic positive constant that may vary between lines.  
	\begin{itemize}
		\item $ L^2(0,1)$ denotes the space of measurable functions on $(0,1)$ which are square integrable, with the norm
		$$
		\|f\|:=\left(\int_0^1|f(x)|^2 d x\right)^{1 / 2}.
		$$
		\item $H^k(0,1)(k\geq 0)$ is the $k$-th order Sobolev space on $(0,1)$ with the norm $$\|f\|_{k}:=\left(\sum_{j=0}^k\left\|\partial_x^j f\right\|_{L^2}^2\right)^{1 / 2}.$$ 
	\end{itemize}
	
	Let $T>0$ be a constant and $B$ be a Banach space, respectively. We denote by $C^k([0, T];B)$ $(k\geqq 0)$ the space of $B$-valued $k$-times continuously differentiable functions on $[0, T]$, and by $L^2([0, T] ; B)$  the space of $B$-valued $L^2$-functions on $[0, T]$. The corresponding spaces of $B$-valued functions on $[0, \infty)$ are defined similarly. We also denote that
	\[
	C^k([0,T];B)\times C^k([0,T];B) =: [C^k([0,T];B)]^2.
	\]
	
	The rest of this paper is organized as follows. In Section \ref{sec2}, we state our main results. Section \ref{sec3} is devoted to the energy estimates and the proof of global existence and long-time behavior for the original system \eqref{a}--\eqref{b}, thereby completing the proof of Theorem \ref{thm1}. In Section \ref{sec4}, we investigate the large-relaxation-time limit of system \eqref{a}--\eqref{b} and prove Theorem \ref{thm3}.  In Section \ref{sec5}, we present numerical simulations to validate the theoretical results obtained in Theorem \ref{thm3}. Finally, the proof of Theorem \ref{thm2}, concerning the global existence and long-time behavior for the quasilinear wave equation system \eqref{2a}--\eqref{bb}, is provided in the Appendix.
	
	\section{Main Results}\label{sec2}
	
	In this section, we shall present the main results of this paper.   We begin with the global existence and long-time behavior for the original problem \eqref{a}--\eqref{b}.
	\begin{The}[Existence and long-time behavior for the original system]\label{thm1}
		Assume that $(u_0^\tau,v_0^\tau)\in \left(H^3(0,1)\right)^2$  satisfies the compatibility conditions \eqref{comp-1}--\eqref{comp-2} and $g$ satisfies \eqref{h1}--\eqref{h2}. There exists a positive constant $\varepsilon_1$ such that if 
		$$\ \| u_{0x}^\tau \|_{2} + \| v_{0x}^\tau\|_{2}\leq\varepsilon_1,$$
		then there exists a positive constant $\tau^*$ independent of $t$ and $\tau$ such that, when $\tau \geq \tau^*$, the problem \eqref{a} with \eqref{b} admits a unique global solution $(u^\tau,v^\tau)\in \left(C^0\left([0,+\infty);H^3(0,1)\right)\right)^2$ satisfying 
		\begin{align}\label{tauest}
			\|u^{\tau}_x( \cdot,t)\|_2+\|v^{\tau}_x( \cdot,t)\|_2\leq C\left(\| u_{0x}^\tau \|_{2} + \| v_{0x}^\tau\|_{2} \right)e^{-\sigma t},
		\end{align}
		and 
		\begin{align}\label{tauest2}
			\|(u^{\tau}-u^\tau_*)( \cdot,t)\|\leq C\left(\| u_{0x}^\tau \| + \| v_{0x}^\tau\|\right)e^{-\sigma t},
		\end{align}
		with
		\begin{align}\label{tauest3}
			\|(v^{\tau}-v^\tau_*)( \cdot,t)\|\leq
			\left\{\begin{aligned}
				&C\frac{1+\tau}{|1-\tau\sigma|}\left(\|v_0^\tau\|_1+\|u_0^\tau\|_1+|f(0)|\right)e^{-\min\{\sigma,\frac{1}{\tau}\}t}, \quad &\text{ if }& \tau\neq\frac{1}{\sigma},\\
				&C\frac{1+\sigma}{\sigma-\sigma'}\left(\|v_0^\tau\|_1+\|u_0^\tau\|_1+|f(0)|\right)e^{-\sigma't}, &\text{ if }& \tau=\frac{1}{\sigma},
			\end{aligned}\right.
		\end{align}
		for any $0<\sigma'<\sigma$, where $C,\sigma$ are positive constants independent of $t$ and $\tau$, and  $(u^\tau_*, v^\tau_*)$ is the constant pair  given by 
		\begin{equation}\label{new1}
			u^\tau_*:=\int_0^1 u_0^\tau(x) dx + \int^\infty_0 [v^\tau(0,t)-v^\tau(1,t)]dt, \quad v^\tau_*:=f(u^\tau_*).
		\end{equation}
	\end{The}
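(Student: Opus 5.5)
The plan is to combine a standard local existence result with a priori estimates closed by a continuity argument. Local well-posedness in $H^3$ follows from the theory of quasilinear hyperbolic systems with dissipative boundary conditions. This uses \eqref{h1}, so that $h(s)=g'(s)s+g(s)\ge\delta$ by \eqref{h2}, together with the compatibility conditions \eqref{comp-1}--\eqref{comp-2}.

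\textbf{A priori assumption and differentiated system.} On $[0,T]$ I assume
\[
N(t):=\|u^\tau_x(\cdot,t)\|_2+\|v^\tau_x(\cdot,t)\|_2+\|u^\tau_t(\cdot,t)\|_2\le 2C_0\varepsilon_1 e^{-\sigma t}.
\]
The essential point is that I never need smallness of $u^\tau$ itself. Instead I work with the system for $w:=u^\tau_x$, or equivalently with the second-order form
\[
u_{tt}-\big(g(u_x)u_x\big)_x+\tfrac1\tau u_t+\tfrac1\tau f'(u)u_x=0 ,
\]
and with its $t$-derivatives. In that form $f$ enters only through $\tau^{-1}f'(u)u_x$. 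The a priori assumption gives
\[
\|u^\tau(\cdot,t)\|_{L^\infty}\le \Big|\int_0^1u_0^\tau\Big|+\int_0^t\|u^\tau_t\|\,ds+\|u^\tau_x\|\le C\big(\|u_0^\tau\|_1+\varepsilon_1\big)=:M,
\]
uniformly in $t$, since $u_t$ decays exponentially. Hence $|f'(u)|\le C_M$, and the source term is bounded by $C_M\tau^{-1}|u_x|$.

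\textbf{Energy estimates.} Multiplying by $u_t$ and using $G(s)=\int_0^s g(r)r\,dr$ gives
\[
\frac{d}{dt}\Big(\tfrac12\|u_t\|^2+\int_0^1 G(u_x)\Big)+k\,g(u_x(1,t))\,u_t(1,t)^2+\tfrac1\tau\|u_t\|^2=-\tfrac1\tau\int_0^1 f'(u)u_xu_t .
\]
Here the boundary term produces $k g\,u_t(1)^2\ge k\delta\,u_t(1)^2$. To obtain decay I add the multiplier $\epsilon(x-\tfrac12)u_x$, or alternatively $\epsilon\, x u_x$, which is the standard boundary-damping multiplier. It yields $\epsilon\big(\|u_t\|^2+\|u_x\|^2\big)$ up to boundary terms at $x=1$. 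Those boundary terms involve $u_t(1)^2$ and $u_x(1)^2=k^2u_t(1)^2$, so they are absorbed by the damping term for $\epsilon$ small. The $\tau^{-1}$ source terms are absorbed once $\tau\ge\tau^*$ with $C_M/\tau^*\ll\epsilon$. This is exactly why $\tau^*$ is needed. It depends on $M$, hence on the size of the data, but not on $t$.

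I then repeat the estimate for $\partial_t u$ and $\partial_t^2 u$. The boundary condition differentiates in time to $u_{xt}(1)=-ku_{tt}(1)$, so the same boundary damping is available at each level. Nonlinear commutators are cubic and controlled by smallness of $N$. Elliptic-type recovery of $u_{xx},u_{xxx}$ comes from the equation, using $h\ge\delta$. Grönwall then gives $E(t)\le CE(0)e^{-2\sigma t}$. Since $v_x=-u_t$ and $v_{xt}$ etc.\ follow from the equations, this proves \eqref{tauest} and closes the bootstrap when $\varepsilon_1$ is small. Uniqueness follows by a standard $L^2$ energy difference argument.

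\textbf{Convergence of the mean and the $v$-estimate.} The mean satisfies $\frac{d}{dt}\int_0^1u=v(0,t)-v(1,t)$. The integrand is integrable because $|v(0)-v(1)|\le\|v_x\|\le Ce^{-\sigma t}$, which defines $u_*^\tau$ in \eqref{new1}. Then
\[
\|u-u^\tau_*\|\le\|u_x\|+\Big|\int_0^1u-u_*^\tau\Big|\le\|u_x\|+\int_t^\infty\|v_x\|\,ds,
\]
which gives \eqref{tauest2}.

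For $v$, set $z=v-f(u_*^\tau)$. It satisfies the ODE $z_t+z/\tau=F$ with $F=\tau^{-1}\big(f(u)-f(u_*)\big)-g(u_x)u_x$ and $\|F\|\le C(1+\tau^{-1})e^{-\sigma t}$. Duhamel's formula gives
\[
\|z(t)\|\le e^{-t/\tau}\|z(0)\|+\int_0^t e^{-(t-s)/\tau}\|F(s)\|\,ds .
\]
The integral $\int_0^t e^{-(t-s)/\tau-\sigma s}\,ds$ produces the factor $\tau/|1-\tau\sigma|$ and the rate $e^{-\min\{\sigma,1/\tau\}t}$. The resonant case $\tau=1/\sigma$ gives $te^{-\sigma t}\le Ce^{-\sigma' t}/(\sigma-\sigma')$. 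The term $\|z(0)\|\le\|v_0\|+|f(u_*)|$ is bounded by $\|v_0\|_1+\|u_0\|_1+|f(0)|$ using $|u_*|\le M$, which explains the $|f(0)|$ in \eqref{tauest3}.

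\textbf{Main obstacle.} I expect the hardest step to be closing the higher-order energy estimates with only boundary damping. The difficulty is choosing the multiplier weights so that all boundary terms at $x=1$ at every derivative level are absorbed. This requires using $u_{x}(1)=-ku_t(1)$ together with the lower bound $g\ge\delta$, and keeping the $\tau^{-1}f'(u)$ terms uniformly small via the bootstrap bound on $M$.
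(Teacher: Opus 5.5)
Your proposal is correct and follows essentially the same route as the paper. It reduces to the damped quasilinear wave equation for $u^\tau$ and pairs the $u_t$-multiplier with the boundary-damping multiplier $x u_x$ at the levels $u$, $u_t$, $u_{tt}$. It absorbs the $\tau^{-1}f'(u)u_x$ terms for $\tau\ge\tau^*$ using a uniform $L^\infty$ bound on $u^\tau$ obtained from the decay of $u_t$, recovers the pure $x$-derivatives from the equation and closes by continuation. It then gets $u^\tau_*$ from the mean-value limit and treats $v^\tau-f(u^\tau_*)$ by Duhamel's formula.

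One small correction: only the weight $x$ works here. The multiplier $(x-\tfrac12)u_x$ produces a term $\tfrac14 u_t(0,t)^2$ on the wrong side, and since the left end carries only the Neumann condition, there is no boundary damping there to absorb it.
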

	
	\begin{Rem}
		It is worth noting that condition (\ref{h3}) is not required in Theorem \ref{thm1}.
	\end{Rem}
	
	\begin{Rem}
		As will be seen in the next section, the decay rates presented in
		\eqref{tauest2} and \eqref{tauest3} are determined by
		\[
		\lambda=\min\left\{\frac{2k\delta}{1+k^2\delta},\frac{\sqrt{\delta}}{2}\right\},
		\qquad
		\sigma\in\left(0,\frac{\lambda}{8\left(1+\frac{\lambda}{\sqrt{\delta}}\right)}\right),
		\]
		where $\delta>0$ is given by \eqref{h1} and $k>0$ is the damping
		coefficient in \eqref{b}. Note that, for $k$ sufficiently small,
		\[
		\lambda=\frac{2k\delta}{1+k^2\delta}=O(k)\to0,
		\]
		and hence $\sigma\to0$ as $k\to0$. Thus the exponential decay is
		driven by the damping boundary condition \eqref{b}: as the boundary
		control is switched off ($k\to0$), the decay rate formally vanishes.
		This reveals the dependence of the long-time behavior of the traffic
		flow on the damping boundary.
	\end{Rem}
	
	\begin{Rem}
		We emphasize that the smallness assumption in Theorem \ref{thm1} is imposed only on the spatial derivatives of the initial data, namely \(\|u^\tau_{0x}\|_2+\|v^\tau_{0x}\|_2\le \varepsilon_1\), rather than on the initial data themselves. Thus, the initial profiles may be arbitrarily large in amplitude, provided they are sufficiently flat (i.e., their spatial variations are small).
	\end{Rem}

	We now state the existence and decay property of the solution $(\bar{u},\bar{v})$ to the initial-boundary value problem \eqref{2a}--\eqref{bb}. Although the global existence in time of solutions to the quasilinear wave equations  was well studied by Greenberg and Li \cite{Greenberg-1984} using Riemann invariants, the  explicit decay estimates were not provided in their work. In this paper, we revisit the same problem via the energy method and obtain both global existence and explicit exponential decay rates. The proof of the following theorem will be given in the Appendix.
	
	\begin{The}[Existence and long-time behavior for the profile system]\label{thm2} 
		Assume that $(\bar{u}_0,\bar{v}_0)\in \left(H^3(0,1)\right)^2$ satisfies compatibility conditions and $g$  satisfies \eqref{h1}--\eqref{h2}. There exists a positive constant $\varepsilon_2$ such that if 
		\[ \|\bar{u}_{0x}\|_2+\|\bar{v}_{0x}\|_2\leq \varepsilon_2, \]
		then the problem \eqref{2a}--\eqref{bb} admits a unique global solution $(\bar{u},\bar{v})\in\left(C^0\left([0,+\infty);H^3(0,1)\right)\right)^2$ satisfying
		\begin{align}
			\|\bar{u}_x( \cdot,t)\|_2+\|\bar{v}_x( \cdot,t)\|_2\leq C\left(\|\bar{u}_{0x}\|_2+\|\bar{v}_{0x}\|_2\right)e^{-\gamma t},
		\end{align}
		and
		\begin{align}\label{bar u*}
			\| (\bar{u}-\bar{u}_*)( \cdot,t)\|+\|(\bar{v}-\bar{v}_*)( \cdot,t)\|\leq C\left( \|\bar{u}_{0x}\|+\|\bar{v}_{0x} \|\right)e^{-\gamma t},
		\end{align}
		where $C,\gamma$ are positive constants independent of $t$, and   the constant pair $(\bar{u}_*,\bar{v}_*)$ is given by
		\begin{equation}
			\bar{u}_*=\lim_{t\to\infty}\int_0^1 \bar{u}(x,t) dx, \ \ \ \ \bar{v}_*=\lim_{t\to\infty}\int_0^1 \bar{v}(x,t) dx.
		\end{equation}
	\end{The}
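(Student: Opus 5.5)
The plan is to prove Theorem \ref{thm2} by combining local existence with an a priori estimate and a continuation argument, all done by the energy method for the quasilinear wave equation.

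\textbf{Reduction to a wave equation.} Eliminating $\bar v$ from \eqref{2a} gives
\[
\bar u_{tt}-\big(g(\bar u_x)\bar u_x\big)_x=0,
\qquad\text{i.e.}\qquad
\bar u_{tt}-h(\bar u_x)\bar u_{xx}=0,
\]
with $h(s)=g'(s)s+g(s)\ge\delta$ by \eqref{h1}--\eqref{h2}. The boundary conditions are $\bar u_x(0,t)=0$ and $\bar u_x(1,t)=-k\bar u_t(1,t)$.

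\textbf{Local existence and continuation.} Local existence in $H^3$ follows from standard theory for quasilinear hyperbolic IBVPs with compatible data. It therefore suffices to prove an a priori estimate under the assumption
\[
\sup_{0\le t\le T}\big(\|\bar u_x\|_2+\|\bar u_t\|_2\big)\le\varepsilon,
\]
with $\varepsilon$ small. Since $\bar v_x=-\bar u_t$ and $\bar v_t=-g(\bar u_x)\bar u_x$, the $H^2$ norms of $\bar v_x$ and $\bar u_t$ are equivalent up to constants. The same holds for time derivatives.

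\textbf{First-order energy.} Set $G(s)=\int_0^s g(r)r\,dr\ge \delta s^2/2$ and
\[
E_0=\int_0^1\Big(\tfrac12\bar u_t^2+G(\bar u_x)\Big)dx .
\]
Multiplying the equation by $\bar u_t$ and integrating by parts gives
\[
\frac{d}{dt}E_0=-k\,g(\bar u_x(1,t))\,\bar u_t^2(1,t)\le -k\delta\,\bar u_t^2(1,t).
\]
So the boundary term is the only dissipation.

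\textbf{Interior dissipation.} Since the Poincar\'e inequality is not available, I would use a Morawetz-type multiplier $x\bar u_x$. Integrating $\bar u_{tt}\,x\bar u_x$ gives
\[
\frac{d}{dt}\int_0^1 x\bar u_x\bar u_t\,dx
=-\tfrac12\bar u_t^2(1,t)+\tfrac12\int_0^1 \bar u_t^2\,dx+\Big(\text{terms from } (g\bar u_x)_x x\bar u_x\Big).
\]
Those last terms give $-\int_0^1 H(\bar u_x)\,dx$ plus a boundary term at $x=1$, with $H\sim s^2$. At $x=1$ the boundary term is $\sim \bar u_x^2(1,t)=k^2\bar u_t^2(1,t)$, which is controlled by the boundary dissipation.

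This alone gives the wrong sign on $\int\bar u_t^2$. I would therefore add a second multiplier, $\bar u\,$ replaced by $\bar u-\int_0^1\bar u\,dx$, or equivalently combine with $x\bar u_t$ and use the weight $(x-\tfrac12)$-type trick. Concretely, I would use the standard choice $m(x)=x$ together with the lower-order multiplier $\bar w=\bar u-\int_0^1\bar u\,dx$. The mean-zero function $\bar w$ does satisfy the Poincar\'e inequality $\|\bar w\|\le\|\bar u_x\|$. The boundary term from $\bar w$ at $x=1$ is $-k\bar u_t(1)\bar w(1)g$, which Young's inequality splits into $\eta\|\bar u_x\|^2+C_\eta\bar u_t^2(1)$.

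The outcome is a functional $\mathcal E=E_0+\epsilon_1\int x\bar u_x\bar u_t+\epsilon_2\int\bar w\bar u_t$, equivalent to $E_0$, that satisfies $\mathcal E'\le-c\,\mathcal E$. The nonlinear errors are cubic and are absorbed using smallness, $|g(\bar u_x)-g(0)|\le C\varepsilon$. The rate $\lambda$ in the remark suggests exactly this computation.

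\textbf{Higher-order energy.} I would repeat the estimate for $\partial_t\bar u$ and $\partial_t^2\bar u$. The time-differentiated equations keep the same damping boundary condition, with coefficient $\partial_t(\bar u_x)=-k\bar u_{tt}$ at $x=1$, up to lower-order terms. Spatial derivatives are then recovered from the equation via $\bar u_{xx}=h^{-1}\bar u_{tt}$ and its derivative. This yields exponential decay of $\|\bar u_x\|_2+\|\bar u_t\|_2$ with small bound, which closes the bootstrap. Translating back, $\bar v_x=-\bar u_t$ gives the first estimate.

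\textbf{Asymptotic constants.} I would argue as follows:
\begin{itemize}
\item $\frac{d}{dt}\int_0^1\bar u\,dx=\bar v(0,t)-\bar v(1,t)=\int_0^1\bar u_t\,dx$, which decays exponentially. Hence the limit $\bar u_*$ exists, and $\big|\int_0^1\bar u\,dx-\bar u_*\big|\le Ce^{-\gamma t}$.
\item Similarly, $\frac{d}{dt}\int_0^1\bar v\,dx=-\int_0^1 g(\bar u_x)\bar u_x\,dx$ decays, so $\bar v_*$ exists.
\item Poincar\'e on the mean-zero parts, $\|\bar u-\bar u^{\text{mean}}\|\le\|\bar u_x\|$, then gives \eqref{bar u*}.
\end{itemize}
The bound in terms of $\|\bar u_{0x}\|+\|\bar v_{0x}\|$ follows since $\|\bar u_t(0)\|=\|\bar v_{0x}\|$.

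\textbf{Main obstacle.} I expect the hardest step to be the multiplier construction. The difficulty is producing interior dissipation of both $\bar u_t^2$ and $\bar u_x^2$ from a purely boundary damping, with the quasilinear boundary terms at $x=1$ (involving $g(\bar u_x(1))$) and the higher-order time derivatives handled consistently.
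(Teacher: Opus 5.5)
Your overall plan (boundary-damped energy, the multiplier $x\bar u_x$, time-differentiated energies for $\bar u_t,\bar u_{tt}$, recovery of the pure $x$-derivatives from the equation, continuation, and averaging plus Poincar\'e for $\bar u_*,\bar v_*$) is the paper's. However, the step you single out as the crux is computed with the wrong sign, and the repair you propose rests on that mistake. Since $\int_0^1 x\bar u_t\bar u_{xt}\,dx=\tfrac12\bar u_t^2(1,t)-\tfrac12\int_0^1\bar u_t^2\,dx$, the correct identity, with your $G$ and $Q(s):=s^2g(s)-G(s)$, is
\[
\frac{d}{dt}\int_0^1 x\bar u_x\bar u_t\,dx=\tfrac12\bar u_t^2(1,t)+Q\big(\bar u_x(1,t)\big)-\tfrac12\int_0^1\bar u_t^2\,dx-\int_0^1 Q(\bar u_x)\,dx .
\]
So the interior kinetic term already has the good sign. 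The bad terms sit at $x=1$ and are absorbed by $k\,g(\bar u_x(1,t))\bar u_t^2(1,t)$. Your extra multiplier does the opposite of what you want:
\[
\frac{d}{dt}\int_0^1\bar w\bar u_t\,dx=\int_0^1\bar u_t^2\,dx-\Big(\int_0^1\bar u_t\,dx\Big)^2-\int_0^1 g(\bar u_x)\bar u_x^2\,dx-k\,g\big(\bar u_x(1,t)\big)\bar u_t(1,t)\bar w(1,t).
\]
That is, it trades potential-energy dissipation for kinetic-energy \emph{growth}. With the sign you wrote for $x\bar u_x$, both multipliers (already for $g\equiv c^2$) would produce a positive kinetic and a negative potential contribution in the same ratio. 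No choice of $\epsilon_1,\epsilon_2$ would then dissipate both, so the claimed $\mathcal E'\le-c\,\mathcal E$ does not follow from the computation you describe.

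The fix is simply to correct the sign: the multiplier $x\bar u_x$ alone closes the estimate. This is exactly what the paper does, using $\tfrac12 g(\bar u_x)\bar u_x^2$ instead of $G$ and a coefficient $\bar\lambda\le 2k\delta/(1+k^2\delta)$.

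With your energy, \eqref{h2} gives $Q(s)-G(s)=\int_0^s r^2g'(r)\,dr\ge0$, so $-\int_0^1Q(\bar u_x)\,dx\le-\int_0^1G(\bar u_x)\,dx$. Also $G\ge0$ gives $Q(\bar u_x(1,t))\le k^2g(\bar u_x(1,t))\bar u_t^2(1,t)$. Hence for $0<\epsilon_1\le\min\{2k\delta/(1+2k^2\delta),\sqrt\delta/2\}$, the functional $\mathcal E=E_0+\epsilon_1\int_0^1x\bar u_x\bar u_t\,dx$ is equivalent to $E_0$ and satisfies $\mathcal E'\le-\epsilon_1E_0\le-\tfrac{2\epsilon_1}{3}\mathcal E$. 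The $\bar w$ term can only be kept with $\epsilon_2\ge0$ small compared to $\epsilon_1$, where it is harmless but useless.

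Your choice of $G$ is in fact a small improvement over the paper. The first-order identity becomes exact, so no smallness is needed at that level, whereas the paper's energy generates the cubic term $\tfrac12\int_0^1g'(\bar u_x)\bar u_{xt}\bar u_x^2\,dx$. Smallness is still needed for the $h'$-terms in the $\bar u_t$ and $\bar u_{tt}$ energies. The rest of your plan coincides with the paper's argument.
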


	By Theorem \ref{thm2}, we now exploit the translation invariance of the limiting system \eqref{2a} to construct suitable asymptotic profiles for the original solution as $\tau\to\infty$. The invariance property can be stated as follows: if $(\bar{u},\bar{v})$ is a solution of \eqref{2a}--\eqref{bb} with initial data $(\bar{u}_0,\bar{v}_0)$, then for any constants $h_1,h_2\in\mathbb{R}$, the translated pair $(\bar{u}+h_1,\bar{v}+h_2)$ is again a solution of the same system, provided that the initial data are shifted correspondingly to $(\bar{u}_0+h_1,\bar{v}_0+h_2)$. This observation suggests that the asymptotic profile of $(u^\tau,v^\tau)$ as $\tau\to\infty$ should be obtained by a suitable vertical translation of the limiting solution $(\bar{u},\bar{v})$, with independent shifts for each component.
	
	Guided by this, we define the profiles $(\hat{u},\hat{v})$ by subtracting the  constants $\bar{u}_*,\bar{v}_*$ from the solution $(\bar{u},\bar{v})$. Specifically, set
	\[
	\hat{u}_0(x):=\bar{u}_0(x)-\bar{u}_*,\quad 
	\hat{v}_0(x):=\bar{v}_0(x)-\bar{v}_*,
	\]
	and let $(\hat{u},\hat{v})$ be the unique global solution of \eqref{2a}--\eqref{bb} corresponding to the initial data $(\hat{u}_0,\hat{v}_0)$. Equivalently, $(\hat{u},\hat{v})=(\bar{u}-\bar{u}_*,\bar{v}-\bar{v}_*)$ satisfies the system
	\begin{align}\label{hatuv-equa}
		\begin{cases}
			\hat{u}_t+\hat{v}_x=0, & x\in(0,1),\; t>0,\\[1mm]
			\hat{v}_t+g(\hat{u}_x)\hat{u}_x=0, & x\in(0,1),\; t>0,\\[1mm]
			(\hat{u},\hat{v})(x,0)=(\hat{u}_0(x),\hat{v}_0(x)), & x\in(0,1),\\[1mm]
			\hat{u}_x(0,t)=0, \hat{u}_x(1,t)=-k\hat{u}_t(1,t), &k>0, t>0.
		\end{cases}
	\end{align}
	
	The following corollary summarizes the decay properties of these profiles, which follow directly from Theorem \ref{thm2}.
	
	\begin{Cor}\label{corhat}
		Under the assumptions of Theorem \ref{thm2}, let $\bar{u}_*,\bar{v}_*$ be the constants obtained therein, and define $(\hat{u},\hat{v})$ as above. Then there exist positive constants $C,\gamma$ independent of $t$ such that
		\begin{align}\label{uvhat1}
			\|\hat{u}_x( \cdot,t)\|_2+\|\hat{v}_x( \cdot,t)\|_2 &\le C\left(\|\hat{u}_{0x}\|_2+\|\hat{v}_{0x}\|_2\right)e^{-\gamma t},
		\end{align}
		and 
		\begin{align}\label{uvhat2}
			\|\hat{u}( \cdot,t)\|+\|\hat{v}( \cdot,t)\| &\le C\left(\|\hat{u}_{0x}\|+\|\hat{v}_{0x}\|\right)e^{-\gamma t}.
		\end{align}
	\end{Cor}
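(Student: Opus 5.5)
This is a direct translation argument from Theorem \ref{thm2}. First, I will check that $(\hat{u},\hat{v})=(\bar{u}-\bar{u}_*,\bar{v}-\bar{v}_*)$ really is the solution of \eqref{hatuv-equa} with data $(\hat{u}_0,\hat{v}_0)$. The system \eqref{2a} involves only derivatives of $\bar{u}$ and $\bar{v}$, the boundary conditions in \eqref{bb} involve only $\bar{u}_x$ and $\bar{u}_t$, and the compatibility conditions also involve only derivatives. So subtracting constants preserves all the equations, the boundary conditions and the compatibility conditions. Uniqueness in Theorem \ref{thm2} then identifies the solution launched from $(\hat{u}_0,\hat{v}_0)$ with the translate of $(\bar{u},\bar{v})$.

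Next, I will record that derivatives are unchanged by the translation: $\hat{u}_{0x}=\bar{u}_{0x}$, $\hat{v}_{0x}=\bar{v}_{0x}$, $\hat{u}_x=\bar{u}_x$, $\hat{v}_x=\bar{v}_x$. In particular the smallness hypothesis $\|\hat{u}_{0x}\|_2+\|\hat{v}_{0x}\|_2\le\varepsilon_2$ is exactly that of Theorem \ref{thm2}.

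Substituting these identities into the first estimate of Theorem \ref{thm2} gives \eqref{uvhat1} verbatim, with the same $C$ and $\gamma$. Likewise, \eqref{bar u*} reads $\|\hat{u}(\cdot,t)\|+\|\hat{v}(\cdot,t)\|\le C(\|\hat{u}_{0x}\|+\|\hat{v}_{0x}\|)e^{-\gamma t}$, which is \eqref{uvhat2}.

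No step is a real obstacle. The only point needing care is that the constants $\bar{u}_*,\bar{v}_*$ are determined by the original data $(\bar{u}_0,\bar{v}_0)$ and are finite, which Theorem \ref{thm2} guarantees, so $(\hat{u}_0,\hat{v}_0)\in (H^3(0,1))^2$ is well defined. A consistency check confirms that the shifted profile has zero limiting mean: $\lim_{t\to\infty}\int_0^1\hat{u}\,dx=\bar{u}_*-\bar{u}_*=0$, and likewise for $\hat{v}$. This is why the decay in \eqref{uvhat2} is toward zero rather than toward a nonzero constant.
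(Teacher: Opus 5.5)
Your proposal is correct and follows the paper's route: the paper gives no separate proof and simply notes that the corollary follows directly from Theorem~\ref{thm2}. Your argument is that translation by the constants $\bar u_*,\bar v_*$ leaves all derivatives, the boundary conditions and the compatibility conditions unchanged, so \eqref{uvhat1} and \eqref{uvhat2} are the two estimates of Theorem~\ref{thm2} rewritten for $(\hat u,\hat v)=(\bar u-\bar u_*,\bar v-\bar v_*)$, and your use of uniqueness to identify the solution from $(\hat u_0,\hat v_0)$ with this translate is what makes the decay in \eqref{uvhat2} go to zero.
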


	Finally, we state our main convergence result, which establishes the convergence of the original solutions to the asymptotic profiles in the large-relaxation-time limit.
	
	\begin{The}[Large-relaxation-time limit]\label{thm3}
		Assume that $g$ satisfies \eqref{h1}--\eqref{h2} and $f$ satisfies \eqref{h3}. Then there exists a positive constant $\varepsilon_3\leq \min\{\varepsilon_1,\varepsilon_2\}$ such that, when
		\[\|(u_{0x}^\tau,\hat{u}_{0x})\|_{2} + \| (v_{0x}^\tau,\hat{v}_{0x})\|_{2}\leq \varepsilon_3,\]
		the solutions $(u^\tau,v^\tau)$ of \eqref{a}--\eqref{b} satisfy the
		following uniform-in-time estimate with respect to the
		asymptotic profile $(\hat u,\hat v)$ of \eqref{hatuv-equa}:
		\begin{equation}\label{goal1}
			\| (u^{\tau}-\hat{u},v^{\tau}-\hat{v})( \cdot,t) \| _{L^{\infty}}\le C\left( \| u_{0}^{\tau}-\hat{u}_0\| _1+\| v_{0}^{\tau}-\hat{v}_0\| _1 \right) +C\tau ^{-1}, 
		\end{equation}
		uniformly for all time  $t$,  as $\tau\ge \tau^*$, where $\tau^*$ is defined in Theorem \ref{thm1}.
		
		Moreover, if the initial data satisfy the additional condition
		\[
		\|u_{0}^\tau-\hat{u}_0 \|_1+\|v_{0}^\tau-\hat{v}_0\|_1\leq C\tau^{-1},
		\]
		then the solutions of the original system \eqref{a}--\eqref{b} converge to those of the limiting system \eqref{hatuv-equa} as $\tau\to+\infty$, in the sense that
		\begin{equation}\label{goal2}
			\| (u^{\tau}-\hat{u},v^{\tau}-\hat{v})(\cdot,t)\|_{L^{\infty}}\le C\tau ^{-1}, \ \ \ \mbox{ uniformly for all time } t.
		\end{equation}
	\end{The}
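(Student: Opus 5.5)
We plan to estimate the difference $(U,V):=(u^\tau-\hat u,\,v^\tau-\hat v)$ directly by an energy method. Subtracting \eqref{hatuv-equa} from \eqref{a}, the difference satisfies
\begin{align*}
U_t+V_x=0,\qquad V_t+g(u^\tau_x)u^\tau_x-g(\hat u_x)\hat u_x=\frac{f(u^\tau)-v^\tau}{\tau},
\end{align*}
with $U_x(0,t)=0$ and $U_x(1,t)=-kU_t(1,t)$, since both the boundary conditions and the conservation structure are linear. We write the nonlinear flux difference as $h$-weighted: with $h(s)=g'(s)s+g(s)\ge\delta$ by \eqref{h1}--\eqref{h2}, the mean value theorem gives $g(u^\tau_x)u^\tau_x-g(\hat u_x)\hat u_x=a(x,t)U_x$, where $a=\int_0^1h(\theta u^\tau_x+(1-\theta)\hat u_x)\,d\theta\ge\delta$. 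Theorem~\ref{thm1} and Corollary~\ref{corhat}, under the smallness $\varepsilon_3$, give that $a_t,a_x$ are $O(\varepsilon_3e^{-\min\{\sigma,\gamma\}t})$ in $L^\infty$. The key point is that the right-hand side is a forcing term of size $\tau^{-1}$ that is integrable in time. Writing $f(u^\tau)-v^\tau=[f(u^\tau)-f(u^\tau_*)]+[v^\tau_*-v^\tau]$ and using \eqref{tauest2}--\eqref{tauest3}, we get $\|f(u^\tau)-v^\tau\|\le C e^{-\min\{\sigma,1/\tau\}t}$. Here the rate $1/\tau$ is bad, so instead we will use the structure $\frac{1}{\tau}\int_0^\infty e^{-t/\tau}dt=1$, i.e. the forcing $F:=\tau^{-1}(f(u^\tau)-v^\tau)$ satisfies $\int_0^\infty\|F\|_1\,dt\le C$. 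This uniform $L^1_t$ bound, rather than decay, is what we will exploit.

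\textbf{Step 1 (basic energy).} We multiply the $U$-equation by $aU_x$-compatible multipliers; concretely, we multiply the $V$-equation by $V$ and the $U$-equation (differentiated form $U_{xt}+V_{xx}=0$) by $aU_x$. This gives
\[
\frac{d}{dt}\tfrac12\!\int_0^1(V^2+aU_x^2)\,dx+\text{(boundary damping)}\le C\|a_t\|_{L^\infty}\|U_x\|^2+\|F\|\,\|V\|.
\]
Since $V(1,t)$ relates to $U_t$ via the boundary condition, the boundary term at $x=1$ has a good sign: $k\,a\,U_t^2(1,t)\ge0$. Gronwall's inequality with the integrable coefficient $\|a_t\|_{L^\infty}$ and the $L^1_t$ forcing then gives $\sup_t(\|V\|+\|U_x\|)\le C(\|V(0)\|+\|U_x(0)\|)+C\int_0^\infty\|F\|dt$. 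The last term is where the $\tau^{-1}$ must come from. We bound $\int_0^\infty\|F\|dt\le \tau^{-1}\int_0^\infty\big(\|f(u^\tau)-f(u^\tau_*)\|+\|v^\tau-v^\tau_*\|\big)dt$. The first piece is $O(\tau^{-1}\sigma^{-1})$ by \eqref{tauest2}. For the second piece we avoid \eqref{tauest3} and instead estimate $v^\tau-v^\tau_*$ through $\hat v$: we have $\|v^\tau-v^\tau_*\|\le\|V\|+\|\hat v\|+|v^\tau_*|$. So the plan is to place $\|V\|$ into the Gronwall coefficient (it has the factor $\tau^{-1}$ but is not integrable; this is the main obstacle, see below). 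We also need to show $|v^\tau_*|=|f(u^\tau_*)|$ is small, which follows from \eqref{h3} if $|u^\tau_*|\lesssim$ data$+\tau^{-1}$.

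\textbf{Step 2 (zeroth order and higher order).} To get $L^\infty$ we need $\|U\|$ and $\|V\|_1$. $V_x=-U_t$ is controlled by differentiating the system in $t$ and repeating Step 1 for $(U_t,V_t)$. For $\|U\|$, note that $\int_0^1U\,dx$ evolves by $\frac{d}{dt}\int U=V(0,t)-V(1,t)$, and $U-\int U$ is bounded by Poincaré via $U_x$. Combined with Sobolev embedding, this yields \eqref{goal1}, and \eqref{goal2} follows immediately from it.

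\textbf{Main obstacle.} The main obstacle is the secular term: the relaxation forcing $-V/\tau$ together with the mean drift of $\int U$ could accumulate linearly over times of order $\tau$. We plan to treat the term $-\tau^{-1}V$ as damping rather than forcing, since it contributes $-\tau^{-1}\|V\|^2\le0$ in the energy identity. For the mean of $U$ we use the limits: the asymptotic state of $u^\tau$ is $u^\tau_*$ and that of $\hat u$ is $0$. We therefore must show $|u^\tau_*|\le C(\text{initial difference})+C\tau^{-1}$ by comparing the integral formula \eqref{new1} with the analogous formula for $\hat u$, whose mean tends to $0$. This comparison yields $|u^\tau_*|\le|\int(u_0^\tau-\hat u_0)|+\int_0^\infty|V(0,t)-V(1,t)|dt$. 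Controlling that time integral uniformly in $\tau$, using the boundary dissipation from Step 1 and the exponential decay of $V_x$, $V_t$, is the step I expect to be delicate.
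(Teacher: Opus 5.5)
There is a genuine gap in Step 1, and everything else rests on it. The functional $\frac12\int_0^1(V^2+aU_x^2)\,dx$ does not satisfy the inequality you write. Adding $\int_0^1 V(V_t+aU_x-F)\,dx=0$ and $\int_0^1 aU_x(U_{xt}+V_{xx})\,dx=0$ leaves the term $\int_0^1 aU_x(V+V_{xx})\,dx$. This term neither cancels nor reduces to a boundary term. Moreover, the boundary condition $U_x(1,t)=-kU_t(1,t)=kV_x(1,t)$ involves $V_x$, not $V$. The boundary damping $k\,a(1,t)U_t^2(1,t)$ you quote is what the wave energy $\frac12\int_0^1(V_x^2+aU_x^2)\,dx=\frac12\int_0^1(U_t^2+aU_x^2)\,dx$ produces, i.e.\ after differentiating the $V$-equation in $x$. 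But then the forcing is $F_x$, not $F$, and your analysis of the forcing no longer applies.

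Your treatment of the undifferentiated forcing $F=\tau^{-1}(f(u^\tau)-v^\tau)$ also cannot yield $\tau^{-1}$. In \eqref{tauest3} the rate is only $1/\tau$ and the amplitude involves $\|u_0^\tau\|_1+\|v_0^\tau\|_1$, which is not small, so $\int_0^\infty\|F\|\,dt$ is merely $O(1)$. The refinement through $\|V\|+\|\hat v\|+|f(u^\tau_*)|$ is circular, for two reasons:
\begin{itemize}
\item The non-decaying part $\tau^{-1}\bigl(f(U+\hat u)-f(\hat u)\bigr)=O(\tau^{-1}|U|)$ involves $U$ itself, which your energy does not control. It produces a non-integrable source $C\tau^{-1}\|U\|^2$, which is exactly the secular growth you fear.
\item Making $|f(u^\tau_*)|$ small requires $|u^\tau_*|$ small, which is the step you leave open.
\end{itemize}
Finally, Gronwall with an integrable coefficient gives only uniform bounds, not decay. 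So you have no way to make $\int_0^\infty\|U_t\|\,dt$ finite, and hence no control of the mean of $U$.

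The missing idea is to work with the second-order equation for $U$. Differentiate the $V$-equation in $x$ and use $V_x=-U_t$, $V_{xt}=-U_{tt}$. Then the relaxation term $-\tau^{-1}v^\tau_x$ splits into the damping $\tau^{-1}U_t$ plus $-\tau^{-1}\hat v_x$, and one gets
\[
U_{tt}+\tfrac1\tau U_t-\bigl(aU_x\bigr)_x=\tfrac1\tau\bigl(\hat v_x-f'(u^\tau)u^\tau_x\bigr).
\]
The forcing here decays like $e^{-\min\{\sigma,\gamma\}t}$, at a rate independent of $\tau$. Combine the multiplier $U_t$ with $\lambda xU_xU_t$, which exploits the damping boundary as in Lemma \ref{le3.1}. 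The $a_t,a_x$ terms are absorbed by the smallness $\varepsilon_3$. This gives
\[
\|U_x\|^2+\|U_t\|^2\le Ce^{-\mu t}\bigl(\tau^{-2}+\|U_{0x}\|^2+\|V_{0x}\|^2\bigr).
\]
The remaining steps then follow directly.
\begin{itemize}
\item $\|U(t)\|\le\|U_0\|+\int_0^\infty\|U_s\|\,ds$ bounds $U$. In particular, your ``delicate'' quantity $\int_0^\infty|V(0,t)-V(1,t)|\,dt\le\int_0^\infty\|U_t\|\,dt$ comes for free, so $|u^\tau_*|$ never needs separate treatment.
\item $V$ follows from the Duhamel formula for $V_t+\tau^{-1}V=-aU_x+\tau^{-1}\bigl(f(U+\hat u)-f(\hat u)\bigr)+\tau^{-1}\bigl(f(\hat u)-\hat v\bigr)$. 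The bounded term $O(\tau^{-1}\|U\|)$ is absorbed by $\tau^{-1}\int_0^te^{-(t-s)/\tau}ds\le1$. The last term decays by \eqref{h3} and Corollary \ref{corhat}.
\item $\|V_x\|=\|U_t\|$ finishes the $H^1$ bound.
\end{itemize}
Your instincts to treat $-V/\tau$ as damping and to use \eqref{h3} on the shifted profile are right, but they only work after this reduction.
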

	
	\begin{Rem} We now discuss some technical issues regarding the convergence as $\tau\to\infty$. In fact, 
		we emphasize that the introduction of the shifted profiles \((\hat{u},\hat{v})\) is essential for obtaining global-in-time convergence as \(\tau\to\infty\). Indeed, if one naively takes the unshifted profiles \((\bar{u},\bar{v})\) as the asymptotic limit, the residual term in the perturbed equation would contain
		\[
		\frac{f(\bar{u})-\bar{v}}{\tau}.
		\]
		Since \(\bar{u}\to \bar{u}_*\) and \(\bar{v}\to \bar{v}_*\) as \(t\to\infty\), this residual does not decay exponentially in time. Consequently, in the energy estimates, this term can only be bounded by a constant of order \(1/\tau\). When applying Gr\"onwall's inequality, such a constant source term would yield an estimate of the form
		\[
		E(t)\le e^{-Ct}E(0)+\frac{C}{\tau}\int_0^t e^{-C(t-s)}\,ds,
		\]
		where the integral produces a term of order \(1/\tau\) that does not decay in time. Thus, one cannot obtain uniform-in-time control of the perturbation, and the convergence can only be established locally in time $t$.
		
		In sharp contrast, by choosing the shifted profiles \((\hat{u},\hat{v})=(\bar{u}-\bar{u}_*,\bar{v}-\bar{v}_*)\), Corollary \ref{corhat} ensures that both \(\hat{u}\) and \(\hat{v}\) decay exponentially to zero. Together with the condition \(f(0)=0\), this implies that \(f(\hat{u})\) also decays exponentially. Therefore, the residual term
		\[
		\frac{f(\hat{u})-\hat{v}}{\tau}
		\]
		is exponentially small in time, which allows us to close the energy estimates uniformly for all \(t\ge 0\) and obtain the global convergence result stated in Theorem \ref{thm3}.
	\end{Rem}
	
	\begin{Rem}[The constant-sensitivity case] If the sensitivity is constant, $g\equiv c^2$ with $c>0$, then $g'\equiv0$ and $h=g+g's\equiv c^2$, so that all the terms involving
		$g'$, $h'$ and $h''$ in the energy estimates of Section \ref{sec3} (and of the Appendix) vanish identically, and no smallness of $N(t)$ is required in the proofs. Consequently, the derivative smallness conditions on the initial data can be removed: the conclusions of Theorems \ref{thm1} and \ref{thm2} hold for arbitrary initial data in $H^3(0,1)\times H^3(0,1)$ satisfying the compatibility conditions \eqref{comp-1}--\eqref{comp-2} (with the relaxation terms dropped for Theorem~\ref{thm2}), and Theorem~\ref{thm3} holds without the smallness condition $\varepsilon_3$, provided $\tau\ge\tau^*$ and the well-preparedness condition in \eqref{goal2} is satisfied. In this case, the constants $C$ and $\tau^*$ may depend on the amplitude of the initial data.
	\end{Rem}

	\section{Global Existence and Long-time Behavior for the Original System}\label{sec3}
	\subsection{Reformulation of the Original Problem}\label{sec3.1}
	Differentiating the first equation of \eqref{a} with respect to $t$ and the second equation of \eqref{a} with respect to $x$, respectively, we reduce the system
	\eqref{a}--\eqref{b} to
	\begin{align}\label{sy1}
		u_{tt}^{\tau}+\frac{1}{\tau}u_{t}^{\tau}-(g(u^{\tau}_x) u^{\tau}_x)_x=-\frac{f(u^{\tau})_x}{\tau}, \quad x\in (0,1),\ t>0,
	\end{align}
	subject to the initial-boundary conditions
	\begin{align}\label{sy2}
		\begin{cases}
			u^{\tau}(x,0) =u_{0}^\tau(x), & x\in (0,1),\\[3pt]
			u_{t}^{\tau}(x,0) =u_{1}^\tau(x):=-v_{0x}^\tau, & x\in (0,1),\\[3pt]
			u_{x}^{\tau}(0,t) =0,\quad u_{x}^{\tau}(1,t) =-ku_{t}^{\tau}(1,t),\ k>0, & t>0.
		\end{cases}
	\end{align}
	Furthermore, $v^\tau$ is determined by
	\begin{align*}
		v_t^\tau+\frac{1}{\tau}v^\tau=-g(u^{\tau}_x)u^{\tau}_x+\frac{f(u^{\tau})}{\tau}, \quad x\in (0,1),\ t>0,
	\end{align*}
	with the initial condition
	\begin{align*}
		v^\tau(x,0)=v_0^\tau(x), \quad x\in(0,1).
	\end{align*}
	
	Now, we introduce the solution space for the problem \eqref{sy1} with \eqref{sy2} as follows:
	$$
	X(0,T) :=\left\{u(x,t):u\in C^0\left([0,T);H^3(0,1)\right) ,\left. u_{t}\in C^0\left([0,T);H^2(0,1) \right)\right\} \right.
	$$
	with $0<T\leq+\infty$.

	To investigate the global existence of solutions to the problem \eqref{a}--\eqref{b}, we begin by establishing the global existence result for the reformulated nonlinear wave equations \eqref{sy1} subject to \eqref{sy2}, which serves as a crucial stepping stone toward the original system.

	\begin{The}\label{thm4}
		Under the assumptions of Theorem \ref{thm1}, there exists a positive constant $\epsilon_1$ such that if $\|u_{0x}^\tau\|_2+\|u_1^\tau\|_2\leq \epsilon_1$, then problem \eqref{sy1}--\eqref{sy2} admits a unique global solution 
		$u^{\tau}\in X(0,+\infty)$.
		Moreover, there exists a constant $C_1>0$, independent of $\tau$ and $t$, such that for all $t\ge 0$,
		\begin{align}
			\|u_{x}^{\tau}\|_{2}+\|u_{t}^{\tau}\|_{2} &\leq C_1 e^{-\alpha t}
			\bigl(\|u_{0x}^{\tau}\|_{2}+\|u_{1}^{\tau}\|_{2}\bigr), \label{est}
		\end{align}
		for any $\alpha\in \left(0,\frac{\lambda}{8\left(1+\frac{\lambda}{\sqrt{\delta}}\right)}\right)$, and
		\begin{align}
			\|u^{\tau}\| \leq 2\|u_{0}^{\tau}\|. \label{eeest}
		\end{align}
		Furthermore, it holds that
		\begin{align}
			\|(u^{\tau}-u^\tau_*)(t)\| &\leq C_1 e^{-\alpha t}
			\bigl(\|u_{0x}^{\tau}\|_{2}+\|u_{1}^{\tau}\|_{2}\bigr), \label{eest}
		\end{align}
		for all $t\ge 0$, where $u^\tau_*$ is given by \eqref{new1}.
	\end{The}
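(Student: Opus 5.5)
We argue by the standard continuation scheme: local existence in $X(0,T)$ combined with a priori estimates that do not depend on $T$ or $\tau$. Local well-posedness of the quasilinear damped wave equation \eqref{sy1}--\eqref{sy2} in $X(0,T)$ comes from the classical theory for quasilinear hyperbolic equations with dissipative boundary conditions. Here \eqref{h1} gives strict hyperbolicity, since $h(s)=g(s)+g'(s)s\ge\delta$ by \eqref{h1}--\eqref{h2}. The compatibility conditions \eqref{comp-2} supply the regularity needed up to $H^3$.

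For the a priori step we set
\[
N(T):=\sup_{0\le t\le T}e^{\alpha t}\bigl(\|u_x^\tau(t)\|_2+\|u_t^\tau(t)\|_2\bigr),
\]
and assume $N(T)\le\varepsilon\ll1$. Under this bound, $h(u_x)$ and its derivatives stay close to $h(0)$ in $L^\infty$, and the quasilinear commutator terms are cubic.

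\textbf{Step 1: bound on $u^\tau$ itself.} Integrating \eqref{sy1}, or simply $u_t=-v_x$, in $x$ shows that
\[
\frac{d}{dt}\int_0^1u^\tau dx=v^\tau(0,t)-v^\tau(1,t).
\]
This quantity is controlled by $\|u_t\|_1$ via $v_x=-u_t$, up to the mean of $v$; more directly, $\frac{d}{dt}\int u^\tau=\int u_t^\tau$. Hence
\[
\Bigl|\int_0^1u^\tau(t)\Bigr|\le\Bigl|\int u_0^\tau\Bigr|+\int_0^t\|u_t\|\,ds\le\Bigl|\int u_0^\tau\Bigr|+C\varepsilon/\alpha .
\]
The same computation shows that $\int u^\tau(t)$ converges to $u_*^\tau$ at rate $e^{-\alpha t}$. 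By Poincaré–Wirtinger, $\|u-\int u\|\le C\|u_x\|\le C\varepsilon e^{-\alpha t}$. Together these give \eqref{eeest}, once $\varepsilon$ is small relative to $\|u_0^\tau\|$ (the case of small $\|u_0\|$ being handled through the mean bound), and they give \eqref{eest}. This is precisely where the exponential a priori assumption compensates for the missing Poincaré inequality. It also yields the $L^\infty$ bound on $u^\tau$ needed to bound $f'(u^\tau)$ and higher derivatives of $f$ uniformly.

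\textbf{Step 2: weighted energy estimates.} We differentiate \eqref{sy1} in $t$ up to twice and use the equation to recover the $x$-derivatives. For each level $j=0,1,2$ we multiply $\partial_t^j$\eqref{sy1} by $\partial_t^{j+1}u+\mu\,\partial_t^ju$ and, crucially, also by the Morawetz-type multiplier $x\,\partial_x\partial_t^ju$. Integration by parts with $u_x(0)=0$ and $u_x(1)=-ku_t(1)$ produces:
\begin{itemize}
\item the boundary dissipation $k\,h\,|\partial_t^{j+1}u(1,t)|^2$;
\item from the $x\partial_x$ multiplier, interior terms $\frac12\int(|\partial_t^{j+1}u|^2+h|\partial_x\partial_t^ju|^2)$ with a favorable sign;
\item boundary terms at $x=1$ that are absorbed because $\partial_x\partial_t^ju(1)=-k\partial_t^{j+1}u(1)$.
\end{itemize}
Balancing the weights produces the constant $\lambda=\min\{2k\delta/(1+k^2\delta),\sqrt\delta/2\}$.

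The source terms are handled as follows:
\begin{itemize}
\item the damping $\frac1\tau u_t$ has a good sign;
\item the term $-\frac1\tau f'(u)u_x$ is bounded by $\frac{C}{\tau}\|u_x\|\,\|\cdot\|$, which is absorbed once $\tau\ge\tau^*$;
\item the quasilinear terms are bounded by $C N(T)\cdot$energy.
\end{itemize}
The result is $\frac{d}{dt}E+\lambda' E\le0$ with $E\sim\|u_x\|_2^2+\|u_t\|_2^2$; the equivalence of $E$ with this norm is where the factor $1+\lambda/\sqrt\delta$ in the admissible range of $\alpha$ enters. Hence $E(t)\le CE(0)e^{-2\alpha t}$. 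This gives \eqref{est} and improves the a priori bound to $N(T)\le C_1(\|u_{0x}\|_2+\|u_1\|_2)\le\varepsilon/2$ when $\epsilon_1$ is small, which closes the continuation argument.

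\textbf{Main obstacle.} The main obstacle is the top-order ($j=2$) estimate with the $x\partial_x$ multiplier. The quasilinear boundary terms $h'(u_x)u_{xt}\ldots$ at $x=1$ must be absorbed using only the boundary dissipation and the smallness of $N(T)$. We would handle them by converting spatial derivatives at $x=1$ into time derivatives through the boundary condition and its $t$-derivatives, and by using the Sobolev trace bound $|w(1)|\le C\|w\|_1$ on the cubic terms.
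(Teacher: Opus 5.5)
Your plan is essentially the paper's proof. It uses continuation from local existence plus a priori bounds under $N(T)\le\varepsilon$, and at each level $j=0,1,2$ the multipliers $\partial_t^{j+1}u^\tau$ and $\lambda x\partial_x\partial_t^j u^\tau$ (the paper's $\mathcal F_1,\mathcal F_2,\mathcal F_3$). The $f'$-terms are absorbed for $\tau\ge\tau^*$ and the quasilinear terms by smallness of $N$. The pure $x$-derivatives are recovered from the equation as in \eqref{71}--\eqref{73}, and $u^\tau_*$ comes from the mean of $u^\tau$ plus Poincar\'e--Wirtinger. Three points in your write-up need correcting.

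\emph{The extra multiplier.} You must drop $\mu\,\partial_t^j u^\tau$ at level $j=0$. Multiplying \eqref{sy1} by $\mu u^\tau$ produces three terms: $\frac{\mu}{2\tau}\frac{d}{dt}\|u^\tau\|^2$, the boundary term $\mu k\,g(u^\tau_x(1,t))u^\tau_t(1,t)u^\tau(1,t)$, and the source $-\frac{\mu}{\tau}\int_0^1 f'(u^\tau)u^\tau_x u^\tau\, dx$. All of them involve $u^\tau$ itself, which tends to the possibly large, nonzero constant $u^\tau_*$ rather than to zero. The resulting functional is then neither equivalent to $\|u^\tau_x\|^2+\|u^\tau_t\|^2$ nor exponentially decaying; this is the missing Poincar\'e inequality reappearing. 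Coercivity comes entirely from the $x\partial_x$ multiplier, and the paper uses only $\partial_t^{j+1}u^\tau$ and $\lambda x\partial_x\partial_t^j u^\tau$.

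\emph{The bound \eqref{eeest}.} It follows from $\|u^\tau\|\le\|u_0^\tau\|+C_2(\|u^\tau_{0x}\|+\|u_1^\tau\|)$ only if the smallness threshold is allowed to depend on $\|u_0^\tau\|$; the paper takes $\epsilon^*\le\|u_0^\tau\|/C_2$. The mean bound gives no separate route for small $\|u_0^\tau\|$. For $u_0^\tau\equiv0$ and an admissible $u_1^\tau\not\equiv0$, the inequality is simply false.

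\emph{Your ``main obstacle''.} The top-order boundary terms you single out do not arise. The damping condition is linear, so its second $t$-derivative is exactly $W_x(1,t)=-kW_t(1,t)$ for $W=u^\tau_{tt}$. The paper keeps the commutator $(h'(u^\tau_x)V_x^2)_x$, with $V=u^\tau_t$, as an interior source. It expands this term and eliminates $V_{xx}$ through the $t$-differentiated equation (the terms $K_2$--$K_4$ in \eqref{ww}), so no nonlinear boundary terms appear. Integrating the commutator by parts, as you propose, would instead produce $\int_0^1 h'(u^\tau_x)V_x^2W_{xt}\,dx$. That is one derivative above what $\mathcal F_3$ controls, and you would then have to move the time derivative into the energy functional.
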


	To prove Theorem \ref{thm4}, we employ a standard continuation argument based on the interplay between local existence and a priori estimates. To this end, we define
	\[
	N(t):=\sup_{0\leq s \leq t}\left\{e^{\alpha s }\left\|u_x^\tau(s)\right\|_2+e^{\alpha s}\left\|u_s^\tau(s)\right\|_2\right\}.
	\]
	Then, by the Sobolev embedding theorem, we have
	\begin{align}\label{eng}
		\sup_{s\in [0,t]}\left\{\left\|u_{x}^{\tau}(s)\right\|_{L^{\infty}}+\left\|u_{xx}^{\tau}(s)\right\|_{L^{\infty}}+\left\|u_{s}^{\tau}(s)\right\|_{L^{\infty}}+\left\|u_{sx}^{\tau}(s)\right\|_{L^{\infty}} \right\} \le CN(t).
	\end{align}
	Moreover, for $u^\tau$ itself, we obtain
	\begin{align*}\label{uinfty}
		\|u^\tau\|\le \|u_0^\tau\|+\int_0^t \|u_s^\tau\| ds\le \|u_0^\tau\|+\frac{N(t)}{\alpha}.
	\end{align*}
	Thus, by the Sobolev embedding theorem again
	\begin{align}
		\sup_{s\in[0,t]}\|u(s)\|_{L^\infty}\le \|u_0^\tau\|+CN(t).
	\end{align}
	
	We shall make use of the following two propositions.
	
	\begin{Pro}[Local existence]\label{local}
		For any $\epsilon_0>0$, there exists a positive constant $T_0$ depending on $\epsilon_0$ and $u_0^\tau$ such that if $u_0\in H^3(0,1)$, $u_1\in H^2(0,1)$ with $N(0)\le\epsilon_0$, then problem \eqref{sy1}--\eqref{sy2} admits a unique solution $u^{\tau}\in X(0,T_0)$ satisfying $N(t)\le 2N(0)$ for any $0\leq t\leq T_0$.
	\end{Pro}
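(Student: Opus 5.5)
The plan is the classical Kato-type quasilinear hyperbolic theory: build solutions of linearized problems, iterate, and show the iteration contracts on a short time interval. We work with \eqref{sy1}--\eqref{sy2} as a quasilinear wave equation
\[
u_{tt}+\tfrac{1}{\tau}u_t-h(u_x)u_{xx}=-\tfrac{1}{\tau}f'(u)u_x ,
\]
where $h(s)=g'(s)s+g(s)\ge\delta$ by \eqref{h1}--\eqref{h2}. This equation is strictly hyperbolic uniformly in $u_x$, and the damping boundary condition at $x=1$ is dissipative.

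\textbf{Step 1: the linear problem.} Fix $w\in X(0,T)$ with $\sup_{[0,T]}(\|w_x\|_2+\|w_t\|_2)\le 2\epsilon_0$ and $w(\cdot,0)=u_0^\tau$. Solve the linear problem
\[
u_{tt}+\tfrac{1}{\tau}u_t-h(w_x)u_{xx}=-\tfrac{1}{\tau}f'(w)w_x
\]
with the same data and boundary conditions $u_x(0,t)=0$, $u_x(1,t)=-ku_t(1,t)$. Existence for this linear problem comes from Galerkin approximation, or from semigroup theory in $H^1\times L^2$, where the boundary operator is maximal dissipative. Regularity is lifted to $X(0,T)$ by differentiating in $t$, using the compatibility conditions \eqref{comp-2}, and recovering $x$-derivatives from the equation via $h\ge\delta$.

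\textbf{Step 2: energy estimates for the linear problem.} Define
\[
E_j(t)=\int_0^1\bigl(|\partial_t^{j}u_t|^2+h(w_x)|\partial_t^{j}u_x|^2\bigr)dx ,\qquad j=0,1,2 .
\]
Multiply the $\partial_t^j$-differentiated equation by $\partial_t^{j}u_t$ and integrate.
\begin{itemize}
\item The boundary term at $x=1$ is $-k\,h(w_x)|\partial_t^ju_t(1,t)|^2\le0$, up to commutators involving $h(w_x)_t$.
\item The interior commutators are bounded by $C\epsilon_0\sum_j E_j$ through the $L^\infty$ control \eqref{eng} on $w$.
\item The forcing terms involve $f'(w)$ and $f''(w)$. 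These need $\|w\|_{L^\infty}\le\|u_0^\tau\|+C\epsilon_0 T$, which is why $T_0$ depends on $u_0^\tau$.
\end{itemize}
Gronwall's inequality then gives
\[
\sum_jE_j(t)\le e^{C(\epsilon_0,\|u_0^\tau\|)t}\Bigl(\sum_jE_j(0)+Ct\Bigr),
\]
and the $1/\tau$ factor on the forcing makes the additive term small. The $x$-derivatives $u_{xx},u_{xxx}$ are recovered from the equation. For $T_0$ small, this yields $\sup_{[0,T_0]}(\|u_x\|_2+\|u_t\|_2)\le 2N(0)$. Since $e^{\alpha t}\le1+\alpha T_0$ on $[0,T_0]$, this bound is compatible with $N(t)\le2N(0)$ after shrinking $T_0$ slightly, using a strict inequality to absorb the factor.

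\textbf{Step 3: contraction and uniqueness.} Consider the map $w\mapsto u$. The difference of two images satisfies a linear equation whose source contains $(h(w^1_x)-h(w^2_x))u^2_{xx}$, so the estimate must be done in the lower norm $\|\cdot_x\|+\|\cdot_t\|$. We expect this step to be the main obstacle: the loss of one derivative in the coefficient, together with the boundary commutators coming from the nonlinear coefficient $h(w_x)$ in the damping condition. We handle it as in Kato's scheme. First show the map preserves the high-norm ball. Then show it contracts in the low norm for $T_0$ small. Weak-$*$ compactness and weak lower semicontinuity then give a fixed point in $X(0,T_0)$, and continuity in time in $H^3$ follows from energy equalities. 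Uniqueness follows from the same low-norm difference estimate.
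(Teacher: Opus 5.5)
The paper gives no proof of this proposition; it only cites Nishida's notes. Your Kato-type scheme is the standard argument behind such a citation, but two of its steps fail as written.

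First, the conclusion $N(t)\le 2N(0)$ does not follow from Step 2. In fact it cannot be proved with $T_0$ depending only on $\epsilon_0$ and $u_0^\tau$. Gronwall controls $\sum_jE_j$, but passing between $\sum_jE_j$ and $(\|u_x\|_2+\|u_t\|_2)^2$ costs constants depending on $\delta$ and $\sup h$. Indeed, $E_1(0),E_2(0)$ contain $u_{tt}(0),u_{ttx}(0),u_{ttt}(0)$, which are bounded by $N(0)$ only through the equation. Likewise, $u_{xxx},u_{xxt}$ are recovered from the $E_j(t)$ only after dividing by $h\ge\delta$. Shrinking $T_0$ therefore gives $\sup_{[0,T_0]}(\|u_x\|_2+\|u_t\|_2)\le C_*N(0)$ with $C_*\not\to1$; the factor $e^{\alpha T_0}$ is not the issue.

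Here is a concrete obstruction. Take $g\equiv c^2$ with $c<1/2$, $f\equiv0$, $u_0^\tau\equiv0$, and $u_1^\tau=b\chi(x)\cos(nx)$ with $\chi\in C_c^\infty(0.4,0.6)$; all compatibility conditions then hold trivially. For $t<0.4/c$ the solution coincides with the whole-line one. At $t_n=\pi/(2cn)$ one finds $\|\partial_x^3u^\tau(t_n)\|\approx c^{-1}\|\partial_x^2u_1^\tau\|$, i.e. $N(t_n)>2N(0)$. Meanwhile $t_n\to0$, and $b$ can be tuned so that $N(0)=\epsilon_0$.

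What you can prove is $N(t)\le C_*N(0)$ on a uniform interval $[0,T_0]$. For this, the iteration ball should have radius proportional to $N(0)$, not $2\epsilon_0$, since the forcing $\tau^{-1}f'(w)w_x$ is controlled by the size of $w_x$ rather than by a constant. This weaker bound suffices for the paper's continuation argument once $\epsilon^*$ is chosen relative to $C_*$. Obtaining the factor $2$ from time-continuity of the solution instead would make $T_0$ depend on $u_1^\tau$ itself. That would destroy the uniform step length the continuation argument relies on.

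Second, your ball only bounds $\sup(\|w_x\|_2+\|w_t\|_2)$, which says nothing about $w_{tt}$. At level $j=2$, however, $\partial_t^2[h(w_x)u_{xx}]$ and $\tau^{-1}\partial_t^2[f'(w)w_x]$ contain $h'(w_x)w_{xtt}u_{xx}$ and $\tau^{-1}f'(w)w_{xtt}$. These are controlled neither by \eqref{eng} nor by the ball norm, so your claim that all commutators are bounded by $C\epsilon_0\sum_jE_j$ fails there. The damping condition forces you to use time-differentiated energies, because spatial derivatives do not inherit $u_x(1,t)=-ku_t(1,t)$. Hence the iteration has to be run in a space such as $\bigcap_{j=0}^{3}C^j([0,T];H^{3-j}(0,1))$, whose norm includes $\|w_{tt}\|_1+\|w_{ttt}\|$. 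The ball is still invariant, since $E_1$ and $E_2$ of the image control exactly $u_{tt}$ in $H^1$ and $u_{ttt}$ in $L^2$.

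With these two corrections, the low-norm contraction, the fixed point, and uniqueness in Step 3 go through as you describe.
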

	
	\begin{Pro}[A priori estimate]\label{priori}
		Let $u^{\tau}\in X(0,T)$ be a solution for some $T>0$. Then there exists a positive constant $\epsilon$ independent of $T$ such that if
		\begin{align}\label{pri}
			N(t)\le\epsilon, \quad t\in[0,T],
		\end{align}
		then $u^{\tau}$ satisfies \eqref{est}, \eqref{eest} and
		\begin{align}\label{3.11}
			\|u^{\tau}\|\leq \|u_0^\tau\|+C_2\left( \left\| u_{0x}^{\tau} \right\|+\| u_{1}^{\tau}\| \right),
		\end{align}
		for any $0\leq t\leq T$, where $C_2>0$ is a constant independent of $T$ and $\tau$.
	\end{Pro}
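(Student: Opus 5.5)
We prove Proposition \ref{priori} by a weighted energy method for the damped quasilinear wave equation \eqref{sy1}. The key point is to exploit the boundary dissipation at $x=1$ rather than the weak interior damping $\frac1\tau u_t$, which degenerates as $\tau\to\infty$.

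\textbf{Step 1: basic energy and boundary dissipation.} Set $G(s):=\int_0^s g(r)r\,dr$; by \eqref{h1} we have $G(s)\ge \frac{\delta}{2}s^2$. Multiplying \eqref{sy1} by $u_t^\tau$ and integrating over $(0,1)$, the boundary terms become
\[
g(u_x^\tau)u_x^\tau u_t^\tau\big|_0^1=-k\,g(u_x^\tau(1,t))(u_t^\tau(1,t))^2\le -k\delta (u_t^\tau(1,t))^2 .
\]
This gives
\[
\frac{d}{dt}\int_0^1\Big(\tfrac12 (u_t^\tau)^2+G(u_x^\tau)\Big)dx+\frac1\tau\|u_t^\tau\|^2+k\delta\, u_t^\tau(1,t)^2=-\frac1\tau\int_0^1 f'(u^\tau)u_x^\tau u_t^\tau\,dx .
\]
The right-hand side is bounded by $\frac{C}{\tau}(\|u^\tau\|_{L^\infty})(\|u_x^\tau\|^2+\|u_t^\tau\|^2)$. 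Here $\|u^\tau\|_{L^\infty}\le \|u_0^\tau\|_{1}+CN(t)$, as noted after $N(t)$ was defined, and this is exactly where $\tau\ge\tau^*$ is used to make the term small.

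\textbf{Step 2: dissipation of the full energy.} Boundary damping alone controls only $u_t(1,t)$, so we add a multiplier of the form $\eta\, x\, u_x^\tau\cdot(\text{eq})$ (equivalently, a Morawetz/observability functional $\int_0^1 x u_x^\tau u_t^\tau dx$). Integrating by parts gives
\[
-\frac{d}{dt}\int_0^1 x u_x u_t\,dx=\tfrac12\|u_t\|^2+\tfrac12\int_0^1 h\text{-type}\,u_x^2\,dx-\tfrac12 u_t(1)^2-\tfrac12 g\,u_x(1)^2+\dots .
\]
The boundary terms at $x=1$ are expressed via $u_x(1)=-ku_t(1)$ and absorbed by $k\delta u_t(1)^2$, while the terms at $x=0$ vanish. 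Choosing $\eta$ small relative to $\lambda=\min\{\frac{2k\delta}{1+k^2\delta},\frac{\sqrt\delta}2\}$ yields $\frac{d}{dt}E_0+c\lambda E_0\le C(N(t)+\tau^{-1})E_0$. The terms involving $g'$ are cubic and controlled by \eqref{eng} under \eqref{pri}. Here \eqref{h2} ensures $h\ge g\ge\delta$, so the multiplier estimate holds with a positive coefficient.

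\textbf{Step 3: higher-order estimates.} We repeat Steps 1--2 for $u_t^\tau$ and $u_{tt}^\tau$ by differentiating \eqref{sy1} in $t$; these satisfy the same boundary conditions. Spatial derivatives are recovered from the equation: $h(u_x)u_{xx}=u_{tt}+\frac1\tau u_t+\frac1\tau f'(u)u_x$, and $u_{xxx}$ is obtained similarly. This produces $E(t)\sim\|u_x^\tau\|_2^2+\|u_t^\tau\|_2^2$ satisfying $E'+2\alpha' E\le 0$ once $\epsilon$ is small and $\tau\ge\tau^*$. Then \eqref{est} follows with any $\alpha$ below $\frac{\lambda}{8(1+\lambda/\sqrt\delta)}$, the loss coming from the equivalence constants between $E$ and the modified energy.

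\textbf{Step 4: bounds on $u^\tau$ itself.} For \eqref{3.11}, write $u^\tau(t)=u_0^\tau+\int_0^t u_s^\tau\,ds$ and use \eqref{est}, which gives $\|u^\tau\|\le \|u_0^\tau\|+C_2(\|u_{0x}^\tau\|+\|u_1^\tau\|)$; the lower-order norms suffice by redoing the $L^2$-level estimate. For \eqref{eest}, integrate $u_t=-v_x$ to get $\int_0^1u^\tau\,dx\to u_*^\tau$, with an exponential tail $\int_t^\infty|v(0)-v(1)|\,ds\le Ce^{-\alpha t}$ since $v_x=-u_t$ decays. Then Poincar\'e–Wirtinger gives $\|u^\tau-\bar{u^\tau}\|\le\|u_x^\tau\|$.

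\textbf{Main obstacle.} The hardest step is the third-order energy. The nonlinear boundary terms arising from $u_{tt}$ and from differentiating $g(u_x)u_x$ at $x=1$ must be absorbed by the boundary dissipation uniformly in $\tau$. This has to be done using the compatibility conditions and smallness of $N(t)$, without ever invoking the vanishing interior damping $\frac1\tau$.
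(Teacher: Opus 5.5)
Your proposal is correct and follows essentially the same route as the paper: the $u_t$-energy plus the multiplier $\lambda\int_0^1 x u_x u_t\,dx$, with the boundary dissipation $k g(u_x(1,t))u_t(1,t)^2$ absorbing the $x=1$ terms, then the same construction for $V=u_t$ and $W=u_{tt}$, recovery of the pure spatial derivatives from the equation, time integration of $u_t$ using the first-order $L^2$ decay for \eqref{3.11}, and the spatial mean plus Poincar\'e--Wirtinger for \eqref{eest}; using the primitive $G(u_x)$ instead of $\frac12 g(u_x)u_x^2$, or one combined higher-order energy instead of the sequential Gr\"onwall steps of Lemmas \ref{le3.2}--\ref{le3.3}, are only cosmetic variations. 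The third-order boundary obstacle you anticipate does not actually arise: $V$ and $W$ inherit the same linear condition $W_x(1,t)=-kW_t(1,t)$, and the extra term $(h'(u_x)V_x^2)_x$ is expanded in the interior rather than integrated by parts, so the compatibility conditions enter only through the regularity of the local solution.
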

	
	The proof of local existence is quite standard and can be found in \cite{Nishida-1978}, so we omit it here. The {\it a priori} estimates, which constitute the core of our analysis, will be established in the following subsection.

	\subsection{The {\it a priori} estimates}\label{sec3.2}
	To prove Proposition \ref{priori}, we first assume that \eqref{pri} holds for some positive constant $\epsilon$ to be determined later. Our goal is to derive the a priori estimates \eqref{est}, \eqref{eest} and \eqref{3.11} under this assumption, and then verify that the solution indeed satisfies \eqref{pri} with a possibly smaller $\epsilon$, thereby closing the argument. In other words, we may assume $N(t)$ to be sufficiently small throughout this subsection.
	
	To begin with, we introduce several constants that depend on neither $t$ nor $\tau$, which will be used frequently throughout this subsection. Define
	\begin{align}\label{lambda}
		\lambda:=\min\left\{\frac{2k\delta}{1+k^2\delta},\frac{\sqrt{\delta}}{2}\right\}
	\end{align}
	and
	\begin{align}\label{be}
		\beta:=\frac{\lambda^2}{4\left(1+\frac{\lambda}{\sqrt{\delta}}\right)},
	\end{align}
	where $k>0$ and $\delta>0$ are the constants given in \eqref{bb} and the hypothesis \eqref{h1}, respectively.
	
	We now establish the first-order energy estimates, which form the foundation for the subsequent higher-order analysis.
	\begin{Lem}\label{le3.1}
		Under the assumptions of Theorem \ref{thm4}, there exist positive constants $C$ and $\tau^*$, independent of both $\tau$ and $t$, such that the following estimates hold whenever $\tau\geq\tau^*$ and $N(t)$ is sufficiently small:
		\begin{align}\label{es1}
			\|u_{x}^{\tau}\|+\|u_{t}^{\tau}\|\leq Ce^{-\alpha_1 t}\left(\|u_{0x}^{\tau}\|+\|u_{1}^{\tau}\|\right),
		\end{align}
		and
		\begin{align}\label{es2}
			\|u^{\tau}\|\leq \|u_0^\tau\|+C\left( \left\| u_{0x}^{\tau} \right\|+\| u_{1}^{\tau}\| \right),
		\end{align}
		where $\alpha_1:=\frac{\lambda}{8\left(1+\frac{\lambda}{\sqrt{\delta}}\right)}$ is a positive constant independent of $\tau$ and $t$, and $\lambda$ is defined in \eqref{lambda}.
	\end{Lem}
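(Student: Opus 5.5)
We prove Lemma \ref{le3.1} with a weighted energy functional for the wave equation \eqref{sy1}. Write $u=u^\tau$ and $G(s):=\int_0^s g(r)r\,dr$. By \eqref{h1}--\eqref{h2}, $G(s)\ge \frac{\delta}{2}s^2$, and $G(s)\le \frac12 g(s)s^2$ since $(g(s)s^2/2-G(s))' = \frac12 g'(s)s^2$ has the sign of $s$. When $N(t)$ is small, $g(u_x)\le C$ by \eqref{eng}, so $G(u_x)$ is comparable to $u_x^2$.

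First, multiply \eqref{sy1} by $u_t$ and integrate over $(0,1)$. The boundary term is $g(u_x)u_xu_t|_0^1=-k g(u_x(1,t))u_t(1,t)^2$, so
\[
\frac{d}{dt}\int_0^1\Big(\tfrac12u_t^2+G(u_x)\Big)dx+\frac1\tau\|u_t\|^2+k g(u_x(1,t))u_t^2(1,t)=-\frac1\tau\int_0^1 f'(u)u_xu_t\,dx .
\]
Second, to produce dissipation of $u_x$, multiply \eqref{sy1} by a multiplier of the form $\mu(x)u_x$ with $\mu(x)=x$ (the standard boundary-damping multiplier), together with possibly a small multiple of $u$. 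With $\mu=x$, the bulk terms give $-\frac12\int_0^1(u_t^2+h\text{-type }u_x^2)\,dx$, where $\int g(u_x)u_x u_{xx}x\,dx$ is handled through the primitive of $g(s)s\cdot s$ up to $O(N)\|u_x\|^2$ errors. The boundary terms at $x=1$ are $\frac12u_t^2(1)+\frac12 g u_x^2(1)$, and using $u_x(1)=-ku_t(1)$ they are controlled by $C\,u_t^2(1)$, which is absorbed by the damping term. Adding a multiple $\eta$ of this identity, with $\eta$ tied to $\lambda=\min\{2k\delta/(1+k^2\delta),\sqrt\delta/2\}$, gives a functional
\[
E(t)=\int_0^1\Big(\tfrac12u_t^2+G(u_x)+\eta\, x u_tu_x\Big)dx,
\]
which is equivalent to $\|u_t\|^2+\|u_x\|^2$ (this is where $\lambda/\sqrt\delta$ enters, via Cauchy--Schwarz). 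It satisfies
\[
E'(t)+2\beta' E(t)\le \frac{C}{\tau}\int_0^1 |f'(u)||u_x|(|u_t|+|u_x|)\,dx+CN(t)(\|u_x\|^2+\|u_t\|^2).
\]
The rate bookkeeping yields $\beta$ of \eqref{be} and hence $\alpha_1=\lambda/(8(1+\lambda/\sqrt\delta))$. Note that the good term $\frac1\tau\|u_t\|^2$ is not needed; it is simply dropped, so the decay comes from the boundary.

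The main obstacle is the source term $\frac1\tau f'(u)u_x u_t$. Here $|f'(u)|\le \sup_{|s|\le M}|f'(s)|$ with $M=\sup\|u\|_{L^\infty}$, and no Poincaré inequality is available for $u$. We use the bound after \eqref{eng}: $\|u\|_{L^\infty}\le\|u_0\|+\|u_0\|_{?}+CN(t)$, where the $L^\infty$ bound is $\|u\|_{L^\infty}\le \|u\|+\|u_x\|$. This bound is independent of $\tau$ but depends on the large datum $u_0$, giving a constant $C_f$. We then choose $\tau^*$ so large that $C_f/\tau^*\le \beta'$, absorbing the term into $\beta' E$; this is the reason for the condition $\tau\ge\tau^*$. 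Smallness of $N(t)$ absorbs the quasilinear errors. Gronwall then gives $E(t)\le E(0)e^{-2\alpha_1 t}$, which is \eqref{es1}.

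Finally, \eqref{es2} follows from $\|u(t)\|\le\|u_0\|+\int_0^t\|u_t\|\,ds\le \|u_0\|+\frac{C}{\alpha_1}(\|u_{0x}\|+\|u_1\|)$. This is consistent with the a priori bound, so the argument closes.
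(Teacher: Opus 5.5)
Your proposal follows the paper's proof essentially step for step: the $u_t$-energy identity plus $\lambda$ times the $x u_x u_t$ multiplier identity, a nonnegative boundary term from $\lambda\le 2k\delta/(1+k^2\delta)$, smallness of $N(t)$ for the $g'$-terms, a $t$-uniform bound on $f'(u^\tau)$ via $\|u^\tau\|\le\|u_0^\tau\|+N(t)/\alpha$, largeness of $\tau$ for the $1/\tau$ terms, Gr\"onwall, and integration of $u_t$ in time for \eqref{es2}. The deviations are cosmetic. You use the primitive $G(u_x)$ instead of $\tfrac12 g(u_x)u_x^2$, which removes the $g'(u_x)u_{xt}u_x^2$ error. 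You absorb $\frac1\tau f'(u)u_xu_t$ by taking $\tau$ large, whereas the paper uses the $\frac1\tau\|u_t\|^2$ dissipation for it. Note also that your sketch only asserts the exact value of $\alpha_1$; the paper obtains it from the specific choice of $\beta$ satisfying $\frac{\lambda}{4}-\frac{\beta}{\sqrt\delta}=\frac{\beta}{\lambda}$.
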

	\begin{proof}
		Multiplying \eqref{sy1} by $u^{\tau}_t$ and integrating the resulting equation with respect to $x$ over $(0,1)$, we obtain
		\begin{align*}
			\frac{1}{2}\frac{d}{dt}\int_0^1{(u_{t}^{\tau}) ^2dx}+\frac{1}{\tau}\int_0^1{(u_{t}^{\tau}) ^2dx}-\int_0^1{(g(u_{x}^{\tau} ) u_{x}^{\tau} ) _x}u_{t}^{\tau}dx=-\frac{1}{\tau}\int_0^1{f'(u^{\tau})}u_{x}^{\tau}u_{t}^{\tau}dx.
		\end{align*}
		Then, integrating by parts and making use of \eqref{sy2}, we have 
		\begin{align*}
			\begin{aligned}
				&-\int_0^1{(g(u_{x}^{\tau} ) u_{x}^{\tau} ) _x}u_{t}^{\tau}dx\\
				&=kg(u_{x}^{\tau}(1,t) ) ( u_{t}^{\tau}(1,t) ) ^2+\int_0^1{g(u_{x}^{\tau} ) u_{x}^{\tau}}u_{tx}^{\tau}dx
				\\
				&=kg(u_{x}^{\tau}(1,t) ) ( u_{t}^{\tau}(1,t) ) ^2+\frac{1}{2}\frac{d}{dt}\int_0^1{g(u_{x}^{\tau} )}(u_{x}^{\tau} ) ^2dx-\frac{1}{2}\int_0^1{g'(u_{x}^{\tau} ) u_{xt}^{\tau}}(u_{x}^{\tau} ) ^2.
			\end{aligned}
		\end{align*}
		Consequently, substituting this back yields:
		\begin{align}\label{1q1}
			&\frac{d}{dt}\left( \frac{1}{2}\int_0^1{(u_{t}^{\tau}) ^2dx}+\frac{1}{2}\int_0^1{g(u_{x}^{\tau} )}(u_{x}^{\tau} ) ^2dx \right) +\frac{1}{\tau}\int_0^1{(u_{t}^{\tau}) ^2dx}+kg(u_{x}^{\tau}(1,t) ) ( u_{t}^{\tau}(1,t) ) ^2
			\nonumber\\
			&=\frac{1}{2}\int_0^1{g'(u_{x}^{\tau} ) u_{xt}^{\tau}}(u_{x}^{\tau} ) ^2-\frac{1}{\tau}\int_0^1{f' (u^{\tau})u_{x}^{\tau}u_{t}^{\tau}dx.}
		\end{align}
		Now let us consider the modified energy $\int_0^1 xu_x^{\tau}u_t^{\tau} d x$. Differentiating it and integrating by parts, applying \eqref{sy1} again, we obtain
		\begin{align}\label{xxx}
			\frac{d}{dt}\int_0^1 xu_x^{\tau}u_t^{\tau} d x=&\int_0^1{xu_{xt}^{\tau}u_{t}^{\tau}dx}+\int_0^1{xu_{x}^{\tau}u_{tt}^{\tau}dx}\nonumber\\
			=&\frac{1}{2}\int_0^1{x\left( (u_{t}^{\tau}) ^2 \right) _xdx}+\int_0^1{xu_{x}^{\tau}\left( -\frac{1}{\tau}u_{t}^{\tau}+\left( g(u_{x}^{\tau})u_{x}^{\tau} \right) _x-\frac{f'(u^{\tau} ) u_{x}^{\tau}}{\tau} \right) dx}\nonumber\\
			=&\frac{1}{2}( u_{t}^{\tau}(1,t) ) ^2-\frac{1}{2}\int_0^1{(u_{t}^{\tau}) ^2dx}-\frac{1}{\tau}\int_0^1{xu_{x}^{\tau}u_{t}^{\tau}dx}\nonumber\\
			& +\int_0^1{xu_{x}^{\tau}\left( g(u_{x}^{\tau})u_{x}^{\tau} \right) _xdx}-\frac{1}{\tau}\int_0^1{xf'(u^{\tau} ) (u_{x}^{\tau})^2dx},
		\end{align}
		where 
		\begin{align*}
			&\int_0^1{xu_{x}^{\tau}\left( g(u_{x}^{\tau})u_{x}^{\tau} \right) _xdx}\\
			&=k^2g(u_{x}^{\tau}(1,t) ) ( u_{t}^{\tau}(1,t) ) ^2-\int_0^1{g(u_{x}^{\tau})(u_{x}^{\tau} ) ^2dx}-\frac{1}{2}\int_0^1{xg(u_{x}^{\tau})\left( (u_{x}^{\tau} ) ^2 \right) _xdx}\\
			&=k^2g(u_{x}^{\tau}(1,t) ) ( u_{t}^{\tau}(1,t) ) ^2-\int_0^1{g(u_{x}^{\tau})(u_{x}^{\tau} ) ^2dx}
			-\frac{k^2}{2}g(u_{x}^{\tau}(1,t) ) ( u_{t}^{\tau}(1,t) ) ^2\\
			&\ \ \ +\frac{1}{2}\int_0^1{\left( xg(u_{x}^{\tau}) \right) _x(u_{x}^{\tau} ) ^2dx}\\
			&=\frac{k^2}{2}g(u_{x}^{\tau}(1,t) ) ( u_{t}^{\tau}(1,t) ) ^2-\frac{1}{2}\int_0^1{g(u_{x}^{\tau})(u_{x}^{\tau} ) ^2dx}+\frac{1}{2}\int_0^1{xg' (u_{x}^{\tau})u_{xx}^{\tau}(u_{x}^{\tau} ) ^2dx}.
		\end{align*}
		Substituting the above identity into \eqref{xxx}, we obtain
		\begin{align}\label{1q2}
			&\frac{d}{dt}\int_0^1{xu_{x}^{\tau}u_{t}^{\tau}dx}+\frac{1}{\tau}\int_0^1{xu_{x}^{\tau}u_{t}^{\tau}dx}+\frac{1}{2}\int_0^1{(u_{t}^{\tau}) ^2dx}+\frac{1}{2}\int_0^1{g(u_{x}^{\tau})(u_{x}^{\tau} ) ^2dx}\nonumber
			\\
			&=\left[ \frac{1}{2}+\frac{k^2}{2}g(u_{x}^{\tau}(1,t) ) \right] ( u_{t}^{\tau}(1,t) ) ^2+\frac{1}{2}\int_0^1{xg' (u_{x}^{\tau})u_{xx}^{\tau}(u_{x}^{\tau} ) ^2dx}-\frac{1}{\tau}\int_0^1{xf'(u^{\tau} ) (u_{x}^{\tau})^2dx.}
		\end{align}
		Multiplying \eqref{1q2} by $\lambda$ defined in \eqref{lambda} and adding the result to \eqref{1q1}, we obtain
		\begin{align}\label{p}
			&\frac{d}{dt}\left( \frac{1}{2}\int_0^1{(u_{t}^{\tau}) ^2dx}+\frac{1}{2}\int_0^1{g(u_{x}^{\tau} ) (u_{x}^{\tau} ) ^2dx}+\lambda \int_0^1{xu_{x}^{\tau}u_{t}^{\tau}dx} \right) +\frac{\lambda}{\tau}\int_0^1{xu_{x}^{\tau}u_{t}^{\tau}dx}\nonumber\\
			&\quad +\left( \frac{1}{\tau}+\frac{\lambda}{2} \right) \int_0^1{(u_{t}^{\tau}) ^2dx}+\frac{\lambda}{2}\int_0^1{g(u_{x}^{\tau})(u_{x}^{\tau} ) ^2dx}+B_1(t)\nonumber\\
			&=\underset{I_1}{\underbrace{\frac{1}{2}\int_0^1{g'(u_{x}^{\tau})u_{xt}^{\tau}(u_{x}^{\tau})^2}+\frac{\lambda}{2}\int_0^1{xg'(u_{x}^{\tau})u_{xx}^{\tau}(u_{x}^{\tau})^2dx}}}\nonumber\\
			&\quad\underset{I_2}{\underbrace{-\frac{1}{\tau}\int_0^1{f'(u^{\tau})u_{x}^{\tau}u_{t}^{\tau}dx}}}\underset{I_3}{\underbrace{-\frac{\lambda}{\tau}\int_0^1{xf'(u^{\tau})(u_{x}^{\tau})^2dx}}},
		\end{align}
		where
		\begin{align}\label{B1}
			B_1(t):=\left[ kg(u_{x}^{\tau}(1,t) ) -\left( \frac{\lambda}{2}+\frac{\lambda k^2}{2}g(u_{x}^{\tau}(1,t) ) \right) \right] ( u_{t}^{\tau}(1,t) ) ^2\geq 0,
		\end{align}
		since by \eqref{h1} and \eqref{lambda}, we have
		$$\lambda\le\frac{2k\delta}{1+k^2\delta}\le \frac{2kg(u_{x}^{\tau}(1,t))}{1+k^2g(u_{x}^{\tau}(1,t))}.$$ 
		For the term $I_1$ on the right-hand side of \eqref{p}, in view of \eqref{eng} and \eqref{h1}, we have 
		\begin{align}\label{I1}
			I_1\le \frac{CN(t)(1+\lambda)}{\delta}\int_0^1{g(u_{x}^{\tau} ) (u_{x}^{\tau} ) ^2dx}\leq \frac{\lambda}{8}\int_0^1{g(u_{x}^{\tau} ) (u_{x}^{\tau} ) ^2dx},
		\end{align}
		provided $N(t)\le \frac{C\lambda\delta}{1+\lambda}$ for some constant $C>0$ independent of $\tau$ and $t$.
		For the terms $I_2$ and $I_3$,   using \eqref{uinfty} and  Young's inequality, we find that
		\begin{align}\label{I21}
			I_2&\le \frac{C}{\tau \sqrt{\delta}}\int_0^1{\sqrt{g(u_{x}^{\tau})}\left| u_{x}^{\tau} \right|\left| u_{t}^{\tau} \right|dx}\nonumber\\
			&\le \frac{\lambda}{8}\int_0^1{g(u_{x}^{\tau})(u_{x}^{\tau})^2dx}+\frac{C}{\tau ^2\delta \lambda}\int_0^1{(u_{t}^{\tau})^2dx}\nonumber\\
			&\le \frac{\lambda}{8}\int_0^1{g(u_{x}^{\tau})(u_{x}^{\tau})^2dx}+\frac{1}{\tau}\int_0^1{(u_{t}^{\tau})^2dx},
		\end{align}
		and 
		\begin{align}\label{I22}
			I_3\le \frac{C\lambda}{\tau \delta} \int_0^1{g(u_{x}^{\tau} ) (u_{x}^{\tau} ) ^2dx}\le \frac{\lambda}{8}\int_0^1g(u_{x}^{\tau} ) (u_{x}^{\tau} ) ^2dx,
		\end{align}
		provided $\tau\geq C\max\{\frac{1}{\lambda\delta},\frac{1}{\delta}\}$ for some constant $C>0$ independent of $\tau$ and $t$.
		
		Now, let us define 
		\[\mathcal{F}_1(t):=\frac{1}{2}\int_0^1{(u_{t}^{\tau}) ^2dx}+\frac{1}{2}\int_0^1{g(u_{x}^{\tau} ) (u_{x}^{\tau} ) ^2dx}+\lambda \int_0^1{xu_{x}^{\tau}u_{t}^{\tau}dx}.
		\]
		Substituting \eqref{B1}--\eqref{I22} into \eqref{p}, we  obtain
		\begin{align*}
			\frac{d}{dt}\mathcal{F}_1(t)+\frac{\lambda}{8}\int_0^1{(u_{t}^{\tau}) ^2dx}+\frac{\lambda}{8}\int_0^1{g(u_{x}^{\tau} ) (u_{x}^{\tau} ) ^2dx}+\frac{\lambda}{\tau}\int_0^1{xu_{x}^{\tau}u_{t}^{\tau}dx}\le 0,
		\end{align*}
		provided $N(t)\le \frac{C\lambda\delta}{1+\lambda}$ and $\tau\geq C\max\{\frac{1}{\lambda\delta},\frac{1}{\delta}\}$.
		Hence, applying Young's inequality and recalling the defination of $\beta$ in \eqref{be}, we obtain
		\begin{align}\label{ppp}
			&\frac{d}{dt}\mathcal{F}_1(t)+\frac{\lambda}{8}\int_0^1{(u_{t}^{\tau}) ^2dx}+\frac{\lambda}{8}\int_0^1{g(u_{x}^{\tau} ) (u_{x}^{\tau} ) ^2dx}+\beta \int_0^1{xu_{x}^{\tau}u_{t}^{\tau}dx}\nonumber
			\\
			&\le\left(\beta -\frac{\lambda}{\tau} \right) \int_0^1{xu_{x}^{\tau}u_{t}^{\tau}dx}
			\nonumber\\
			&\le \beta \int_0^1{\left| u_{x}^{\tau} \right|\left| u_{t}^{\tau} \right|dx}
			\nonumber\\
			&\le \frac{\beta}{2\sqrt{\delta}}\int_0^1{(u_{t}^{\tau}) ^2dx}+\frac{\beta}{2\sqrt{\delta}}\int_0^1{g(u_{x}^{\tau} ) (u_{x}^{\tau} ) ^2dx},
		\end{align}
		provided $N(t)\le \frac{C\lambda\delta}{1+\lambda}$ and $\tau\geq \max\{\frac{C}{\lambda\delta},\frac{C}{\delta},\frac{\beta}{\lambda}\}$.
		Thus, it follows from \eqref{ppp} that 
		\begin{align*}
			&\frac{d}{dt}\mathcal{F}_1(t)+\left( \frac{\lambda}{4}-\frac{\beta}{\sqrt{\delta}} \right) \frac{1}{2}\int_0^1{(u_{t}^{\tau}) ^2dx}\\&+\left( \frac{\lambda}{4}-\frac{\beta}{\sqrt{\delta}} \right) \frac{1}{2}\int_0^1{g(u_{x}^{\tau} ) (u_{x}^{\tau} ) ^2dx}
			+\frac{\beta}{\lambda}\left( \lambda \int_0^1{xu_{x}^{\tau}u_{t}^{\tau}dx} \right) \le 0.
		\end{align*}
		Since $\frac{\lambda}{4}-\frac{\beta}{\sqrt{\delta}}= \frac{\beta}{\lambda}$ by \eqref{be}, it follows that 
		\begin{align*}
			\frac{d}{dt}\mathcal{F}_1(t)+\frac{\beta}{\lambda} \mathcal{F}_1(t)\le 0.
		\end{align*}
		Thus, Gr\"onwall's inequality yields $\mathcal{F}_1(t)\leq Ce^{-\frac{\beta}{\lambda}t}\mathcal{F}_1(0)$, which implies
		\begin{align}\label{111}
			&\frac{1}{2}\int_0^1{(u_{t}^{\tau}) ^2dx}+\frac{1}{2}\int_0^1{g(u_{x}^{\tau} ) (u_{x}^{\tau} ) ^2dx}+\lambda \int_0^1{xu_{x}^{\tau}u_{t}^{\tau}dx}
			\nonumber\\
			&\le Ce^{-\frac{\beta}{\lambda} t}\left( \frac{1}{2}\int_0^1{(u_{1}^{\tau}) ^2dx}+\frac{1}{2}\int_0^1{g(u_{0x}^{\tau}) (u_{0x}^{\tau})^2}+\lambda \int_0^1{xu_{0x}^{\tau}u_{1}^{\tau}dx} \right) 
			\nonumber\\
			&\le Ce^{-\frac{\beta}{\lambda} t}\left( \int_0^1{(u_{1}^{\tau}) ^2dx}+\int_0^1{(u_{0x}^{\tau})^2dx} \right),
		\end{align}
		provided $N(t)\le \frac{C\lambda\delta}{1+\lambda}$ and $\tau\geq \max\{\frac{C}{\lambda\delta},\frac{C}{\delta},\frac{\beta}{\lambda}\}$, where we have used Young's inequality. By using Young's inequality again and taking advantage of \eqref{h1} and \eqref{lambda}, we see that 
		\begin{align*}
			\begin{aligned}
				-\lambda \int_0^1{xu_{x}^{\tau}u_{t}^{\tau}dx}&\le \frac{\lambda}{\sqrt{\delta}}\int_0^1{\sqrt{g(u_{x}^{\tau})}\left| u_{x}^{\tau} \right|\left| u_{t}^{\tau} \right|dx}\\
				&\le \frac{\lambda}{2\sqrt{\delta}}\int_0^1{(u_{t}^{\tau})^2dx}+\frac{\lambda}{2\sqrt{\delta}}\int_0^1{g(u_{x}^{\tau})(u_{x}^{\tau})^2dx}\\
				&\le \frac{1}{4}\int_0^1{(u_{t}^{\tau})^2dx}+\frac{1}{4}\int_0^1{g(u_{x}^{\tau})(u_{x}^{\tau})^2dx}.
			\end{aligned}
		\end{align*}
		Substituting the above inequality into \eqref{111}, we have
		\begin{align}\label{jie}
			\int_0^1{(u_{t}^{\tau}) ^2dx}+\int_0^1{g(u_{x}^{\tau} ) (u_{x}^{\tau} ) ^2dx}\le Ce^{-\frac{\beta}{\lambda} t}\left( \int_0^1(u_{1}^{\tau})^2dx+\int_0^1{(u_{0x}^{\tau})^2dx} \right)
		\end{align}
		provided $N(t)\le \frac{C\lambda\delta}{1+\lambda}$ and $\tau\geq \max\{\frac{C}{\lambda\delta},\frac{C}{\delta},\frac{\beta}{\lambda}\}$.
		Now set $\tau^*:= \max\{\frac{C}{\lambda\delta},\frac{C}{\delta},\frac{\beta}{\lambda}\}$ and $\alpha_1:=\frac{\beta}{2\lambda}=\frac{\lambda}{8(1+\frac{\lambda}{\sqrt{\delta}})}  $. Then \eqref{es1} follows from \eqref{jie} and \eqref{h1}.  
		
		On the other hand, to estimate $u^\tau$ itself, noting that $u^{\tau}-u_{0}^{\tau}=\int_0^t u_{s}^{\tau}(x,s)ds$, and applying \eqref{es1}, we obtain 
		\begin{align*}
			\begin{aligned}
				\left\| u^{\tau} \right\|&\le \left\| u_{0}^{\tau} \right\|+\int_0^t{\left\| u_{s}^{\tau}\left( \cdot,s \right) \right\|}ds
				\\
				&\le \left\| u_{0}^{\tau} \right\|+C\int_0^t{e^{-\alpha s}\left( \left\| u_{0x}^{\tau} \right\|+\| u_{1}^{\tau}\| \right) ds}
				\\
				&\le \left\| u_{0}^{\tau} \right\|+\frac{C}{\alpha}\left( 1-e^{-\alpha t} \right) \left( \left\| u_{0x}^{\tau} \right\|+\| u_{1}^{\tau}\| \right) 
				\\
				&\le \|u_0^\tau\|+C\left( \left\| u_{0x}^{\tau} \right\|+\| u_{1}^{\tau}\| \right),
			\end{aligned}
		\end{align*}
		which implies \eqref{es2}. This completes the proof of Lemma \ref{le3.1}.
	\end{proof}
	
	Having established the first-order energy estimates, we now proceed to derive the second-order estimates.
	\begin{Lem}\label{le3.2}
		Under the assumptions of Theorem \ref{thm4}, there exist positive constants $C$ and $\tau^*$, independent of both $\tau$  and $t$, such that the following estimate holds whenever $\tau\geq\tau^*$ and $N(t)$  is sufficiently small: 
		\begin{align}\label{es3}
			\|u_{xt}^{\tau}\|+\|u_{tt}^{\tau}\|\le Ce^{-\alpha_2 t}\left( \left\| u_{0x}^\tau \right\|_1+\| u_{1}^\tau\|_1 \right),
		\end{align}
		for any $\alpha_2\in (0, \alpha_1)  $, where $\alpha_1=\frac{\lambda }{8\left( 1+\frac{\lambda}{\sqrt{\delta}} \right)}$ is defined in Lemma \ref{le3.1}.
	\end{Lem}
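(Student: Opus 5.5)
We differentiate \eqref{sy1} in $t$ and repeat the two-multiplier argument of Lemma \ref{le3.1} at the level of $w:=u^\tau_t$. Setting $h(s)=g'(s)s+g(s)$ as in \eqref{comp-1}, $w$ satisfies
\begin{align*}
w_{tt}+\frac{1}{\tau}w_t-\big(h(u^\tau_x)w_x\big)_x=-\frac{1}{\tau}\big(f'(u^\tau)w\big)_x ,
\end{align*}
with $w_x(0,t)=0$ and $w_x(1,t)=-kw_t(1,t)$, obtained by differentiating \eqref{sy2} in time. The compatibility conditions \eqref{comp-2} make this legitimate at $t=0$. By \eqref{h1}--\eqref{h2} we have $h\ge g\ge\delta$, so the linearized coefficient keeps the same lower bound $\delta$ that produced the constant $\lambda$ in \eqref{lambda}.

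\textbf{Step 1 (energy identity).} Multiply by $w_t$ and integrate by parts. This gives
\[
\frac{d}{dt}\Big(\tfrac12\|w_t\|^2+\tfrac12\int_0^1 h(u^\tau_x)w_x^2\,dx\Big)+\frac1\tau\|w_t\|^2+kh(u^\tau_x(1,t))w_t(1,t)^2
\]
on the left. The right side contains $\tfrac12\int_0^1 h'(u^\tau_x)u^\tau_{xt}w_x^2\,dx$ and the relaxation term $-\frac1\tau\int_0^1 (f'(u^\tau)w)_x w_t\,dx$.

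\textbf{Step 2 (multiplier identity).} Multiply by $x w_x$. This reproduces \eqref{1q2} with $g$ replaced by $h$: the interior terms $\tfrac12\|w_t\|^2+\tfrac12\int_0^1 h w_x^2\,dx$ appear on the left, and the boundary terms are $\big[\tfrac12+\tfrac{k^2}{2}h\big]w_t(1,t)^2$. Adding $\lambda$ times this to Step 1, the boundary remainder $B_2(t)$ is nonnegative for the same reason as \eqref{B1}, namely $\lambda\le 2k\delta/(1+k^2\delta)\le 2kh/(1+k^2h)$.

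\textbf{Step 3 (error terms).} The cubic terms involving $h'(u^\tau_x)u^\tau_{xt}w_x^2$ and $x h' u^\tau_{xx}w_x^2$ are bounded by $CN(t)\int_0^1 hw_x^2\,dx$ using \eqref{eng}, and are absorbed for $N(t)$ small.

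The relaxation terms involve $f'(u^\tau)w_x$ and $f''(u^\tau)u^\tau_x w$. Their coefficients are bounded because $\|u^\tau\|_{L^\infty}\le \|u_0^\tau\|+CN(t)$ by \eqref{uinfty}. Hence the terms with $w_x w_t$ and $x w_x^2$ are absorbed for $\tau\ge\tau^*$ exactly as in \eqref{I21}--\eqref{I22}.

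The genuinely new term is $\frac1\tau\int_0^1 f''(u^\tau)u^\tau_x w\,(w_t+\lambda x w_x)\,dx$. It contains $w=u^\tau_t$ itself, which is not part of the second-order energy. We do not absorb it. Instead we bound it by Young's inequality as
\[
\frac{\lambda}{16}\big(\|w_t\|^2+\|w_x\|^2\big)+\frac{C}{\tau^2}\|u^\tau_t\|^2,
\]
and treat the last piece as a forcing term. Lemma \ref{le3.1} gives $\|u^\tau_t\|^2\le Ce^{-2\alpha_1 t}(\|u_{0x}^\tau\|+\|u_1^\tau\|)^2$.

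\textbf{Step 4 (Gronwall).} With $\mathcal F_2$ the analogue of $\mathcal F_1$, handled through $\beta$ exactly as in \eqref{ppp}, we get
\[
\mathcal F_2'+\frac{\beta}{\lambda}\mathcal F_2\le Ce^{-2\alpha_1 t}\big(\|u_{0x}^\tau\|+\|u_1^\tau\|\big)^2 .
\]
Since $\beta/\lambda=2\alpha_1$, the forcing decays at exactly the same rate as the homogeneous part. Gronwall therefore produces a factor $te^{-2\alpha_1 t}$, which is at most $C_{\alpha_2}e^{-2\alpha_2 t}$ for any $\alpha_2<\alpha_1$. This resonance is why the lemma only claims rates $\alpha_2\in(0,\alpha_1)$.

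Finally, $\mathcal F_2(0)$ is controlled by $\|u_{0x}^\tau\|_1^2+\|u_1^\tau\|_1^2$. Here $w_t(0)=u^{(2)}$ is read off from \eqref{sy1}, which requires the smallness of $N(0)$ and the bound $\|f'(u_0^\tau)\|_{L^\infty}\le C$. Coercivity of $\mathcal F_2$ then gives \eqref{es3}.

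The main obstacle is Step 3: getting constants independent of $\tau$ while $u^\tau$ itself may be large, and keeping the low-order term $\tau^{-1}f''u_xw$ under control. Both are handled by the lower-order decay from Lemma \ref{le3.1} and by the factor $1/\tau$, possibly after enlarging $\tau^*$.
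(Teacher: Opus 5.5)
Your proposal is correct and follows essentially the same route as the paper: differentiate \eqref{sy1} in $t$, apply the $V_t^\tau$ and $xV_x^\tau$ multipliers with the same $\lambda$ and $\beta$, and treat the $\tau^{-1}f''(u^\tau)u^\tau_x u^\tau_t$ contributions as a forcing that decays like $e^{-2\alpha_1 t}$ by Lemma~\ref{le3.1}, so that the resonant factor $te^{-2\alpha_1 t}$ costs an arbitrarily small amount of rate. The only cosmetic difference is in that low-order term: you absorb it with part of the $\lambda$-dissipation and are left with a $C\tau^{-2}\|u_t^\tau\|^2$ forcing, whereas the paper absorbs it with the $1/\tau$ damping and keeps $C\|V^\tau\|^2$ as the forcing.
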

	
	\begin{proof}
		Differentiating \eqref{sy1} with respect to $t$ and setting $V^{\tau}:=u^\tau_t$, we obtain
		\begin{align}\label{sy3}
			V_{tt}^{\tau}+\frac{1}{\tau}V_{t}^{\tau}-(h(u^\tau_x) V_{x}^{\tau}) _x=-\frac{1}{\tau}(f'(u^{\tau} ) V^{\tau}  ) _x,
		\end{align}
		where $h(s):= g'(s)s+g(s) $.
		The corresponding boundary conditions are
		\begin{align}\label{e}
			V_{x}^{\tau}\left( 0,t \right) =0,~ V_{x}^{\tau}(1,t) =-kV_{t}^{\tau}(1,t),
		\end{align}
		and the initial data are
		\begin{align}\label{Vini}
			\left\{
			\begin{aligned}
				V^\tau(x,0)=V_0^\tau(x):=& u_1^\tau, \\
				V_t^\tau(x,0)=V_1^\tau(x):=& -\frac{1}{\tau}(u_1^\tau+f'(u_0^\tau)u_{0x})+h(u_{0x}^\tau)u_{0xx}^\tau
			\end{aligned}
			\right.
		\end{align}
		By the assumptions \eqref{h1} and \eqref{h2}, we have
		\begin{align}\label{hs}
			h(s)\ge \delta>0, \quad \text{for all } s\in \mathbb{R}.
		\end{align}
		
		Multiplying \eqref{sy3} by $V^{\tau}_t$, and integrating the result with respect to $x$ over $(0,1)$, we derive
		\begin{align}\label{m1}
			\frac{1}{2}\frac{d}{dt}\int_0^1{(V_{t}^{\tau}  ) ^2dx}+\frac{1}{\tau}\int_0^1{(V_{t}^{\tau}  ) ^2dx}-\int_0^1{(h(u_{x}^{\tau} ) V_{x}^{\tau} ) _xV_{t}^{\tau}dx}=-\frac{1}{\tau}\int_0^1{( f'(u^{\tau} ) V^{\tau}) _xV_{t}^{\tau}dx.}
		\end{align}
		For the last term on the left-hand side of \eqref{m1}, integrating by parts and using \eqref{e}, we obtain 
		\begin{align*}
			&-\int_0^1{(h(u_{x}^{\tau} ) V_{x}^{\tau} ) _xV_{t}^{\tau}dx}\\
			&=kh(u_{x}^{\tau}(1,t) ) (V_{t}^{\tau}(1,t)) ^2+\int_0^1{h(u_{x}^{\tau} ) V_{x}^{\tau}V_{xt}^{\tau}dx}\\
			&=kh(u_{x}^{\tau}(1,t) ) (V_{t}^{\tau}(1,t)) ^2+\frac{1}{2}\frac{d}{dt}\int_0^1{h(u_{x}^{\tau} ) (V_{x}^{\tau} ) ^2dx}-\frac{1}{2}\int_0^1{h'(u_{x}^{\tau} ) V_{x}^{\tau}(V_{x}^{\tau} ) ^2.}
		\end{align*}
		Consequently,
		\begin{align}\label{2q1}
			&\frac{d}{dt}\left( \frac{1}{2}\int_0^1{(V_{t}^{\tau}  ) ^2dx}+\frac{1}{2}\int_0^1{h(u_{x}^{\tau} ) (V_{x}^{\tau} ) ^2dx} \right) +\frac{1}{\tau}\int_0^1{(V_{t}^{\tau}  ) ^2dx}+kh(u_{x}^{\tau}(1,t) ) (V_{t}^{\tau}(1,t)) ^2\nonumber\\
			&=\frac{1}{2}\int_0^1{h'(u_{x}^{\tau} ) V_{x}^{\tau}(V_{x}^{\tau} ) ^2}-\frac{1}{\tau}\int_0^1{f''(u^{\tau} ) u_{x}^{\tau}V^{\tau}V_{t}^{\tau}dx}-\frac{1}{\tau}\int_0^1{f'(u^{\tau} ) V_{x}^{\tau}}V_{t}^{\tau}dx.
		\end{align}
		Proceeding as in Lemma \ref{le3.1}, we have
		\begin{align}\label{m2}
			\frac{d}{dt}\int_0^1{xV_{x}^{\tau}V_{t}^{\tau}dx}
			=&\int_0^1{xV_{xt}^{\tau}V_{t}^{\tau}dx}+\int_0^1{xV_{x}^{\tau}V_{tt}^{\tau}dx}\nonumber\\
			=&\frac{1}{2}\int_0^1{x( (V_{t}^{\tau}  ) ^2) _xdx}+\int_0^1{xV_{x}^{\tau}\left( -\frac{1}{\tau}V_{t}^{\tau}+(h(u_{x}^{\tau})V_{x}^{\tau}) _x-\frac{(f'(u^{\tau} ) V^{\tau}  ) _x}{\tau} \right) dx}\nonumber\\
			=&\frac{1}{2}(V_{t}^{\tau}(1,t)) ^2-\frac{1}{2}\int_0^1{(V_{t}^{\tau}  ) ^2dx}-\frac{1}{\tau}\int_0^1{xV_{x}^{\tau}V_{t}^{\tau}dx}\nonumber\\
			& +\int_0^1{xV_{x}^{\tau}(h(u_{x}^{\tau})V_{x}^{\tau}) _xdx}-\frac{1}{\tau}\int_0^1{x(f'(u^{\tau} ) V^{\tau}  ) _xV_{x}^{\tau}dx.}
		\end{align}
		Next, we simplify the last two terms on the right-hand side of \eqref{m2}. One has
		\begin{align}\label{qq}
			\int_0^1{xV_{x}^{\tau}(h(u_{x}^{\tau})V_{x}^{\tau}) _xdx}=&k^2h(u_{x}^{\tau}(1,t) ) (V_{t}^{\tau}(1,t)) ^2-\int_0^1{h(u_{x}^{\tau})(V_{x}^{\tau} ) ^2dx}-\frac{1}{2}\int_0^1{xh(u_{x}^{\tau})((V_{x}^{\tau} ) ^2) _xdx}\nonumber\\
			=&k^2h(u_{x}^{\tau}(1,t) ) (V_{t}^{\tau}(1,t)) ^2-\int_0^1{h(u_{x}^{\tau})(V_{x}^{\tau} ) ^2dx}
			-\frac{k^2}{2}h(u_{x}^{\tau}(1,t) ) (V_{t}^{\tau}(1,t)) ^2\nonumber\\
			&+\frac{1}{2}\int_0^1{\left( xh(u_{x}^{\tau}) \right) _x(V_{x}^{\tau} ) ^2dx}\nonumber\\
			=&\frac{k^2}{2}h(u_{x}^{\tau}(1,t) ) (V_{t}^{\tau}(1,t)) ^2-\frac{1}{2}\int_0^1{h(u_{x}^{\tau})(V_{x}^{\tau} ) ^2dx}\nonumber\\
			&+\frac{1}{2}\int_0^1 xh'(u_{x}^{\tau})u_{xx}^{\tau}(V_{x}^{\tau} ) ^2dx,
		\end{align}
		and 
		\begin{align}\label{qqq}
			-\frac{1}{\tau}\int_0^1{x(f'(u^{\tau} ) V^{\tau}  ) _xV_{x}^{\tau}dx}
			=-\frac{1}{\tau}\int_0^1{xf'' (u^{\tau} ) u_{x}^{\tau}V^{\tau}V_{x}^{\tau}dx}-\frac{1}{\tau}\int_0^1{xf'(u^{\tau} ) (V_{x}^{\tau} ) ^2}.
		\end{align}
		Hence, substituting \eqref{qq} and \eqref{qqq} into \eqref{m2} yields
		\begin{align}\label{2q2}
			&\frac{d}{dt}\int_0^1{xV_{x}^{\tau}V_{t}^{\tau}dx}+\frac{1}{\tau}\int_0^1{xV_{x}^{\tau}V_{t}^{\tau}dx}+\frac{1}{2}\int_0^1{(V_{t}^{\tau}  ) ^2dx}+\frac{1}{2}\int_0^1{h(u_{x}^{\tau})(V_{x}^{\tau} ) ^2dx}\nonumber\\
			&=\left[ \frac{1}{2}+\frac{k^2}{2}h(u_{x}^{\tau}(1,t) ) \right] (V_{t}^{\tau}(1,t)) ^2+\frac{1}{2}\int_0^1{xh' (u_{x}^{\tau})u_{xx}^{\tau}(V_{x}^{\tau} ) ^2dx}\nonumber\\
			&\quad-\frac{1}{\tau}\int_0^1{xf'' (u^{\tau} ) u_{x}^{\tau}V^{\tau}V_{x}^{\tau}dx}-\frac{1}{\tau}\int_0^1{xf'(u^{\tau} ) (V_{x}^{\tau} ) ^2}.
		\end{align}
		Thus, multiplying \eqref{2q2} by $\lambda$ defined in \eqref{lambda} and adding the result to \eqref{2q1}, we obtain 
		\begin{align}\label{p1}
			&\frac{d}{dt}\left( \frac{1}{2}\int_0^1{(V_{t}^{\tau})^2dx}+\frac{1}{2}\int_0^1{h(u_{x}^{\tau})(V_{x}^{\tau})^2dx}+\lambda \int_0^1{xV_{x}^{\tau}V_{t}^{\tau}dx} \right) +\frac{\lambda}{\tau}\int_0^1{xV_{x}^{\tau}V_{t}^{\tau}dx}\nonumber\\
			&\quad +\left( \frac{1}{\tau}+\frac{\lambda}{2} \right) \int_0^1{(V_{t}^{\tau})^2dx}+\frac{\lambda}{2}\int_0^1{h(u_{x}^{\tau})(V_{x}^{\tau})^2dx}+B_2(t)\nonumber\\
			&=\underset{J_1}{\underbrace{\frac{1}{2}\int_0^1{h'(u_{x}^{\tau})V_{x}^{\tau}(V_{x}^{\tau})^2}+\frac{\lambda}{2}\int_0^1{xh'(u_{x}^{\tau})u_{xx}^{\tau}(V_{x}^{\tau})^2dx}}}\nonumber\\
			&\quad \underset{J_2}{\underbrace{-\frac{1}{\tau}\int_0^1{f'' (u^{\tau})u_{x}^{\tau}V^{\tau}V_{t}^{\tau}dx}-\frac{1}{\tau}\int_0^1{f' (u^{\tau})V_{x}^{\tau}V_{t}^{\tau}dx}-\frac{\lambda}{\tau}\int_0^1{xf'' (u^{\tau})u_{x}^{\tau}V^{\tau}V_{x}^{\tau}dx}}}\nonumber\\
			&\quad \underset{J_3}{\underbrace{-\frac{\lambda}{\tau}\int_0^1{xf' (u^{\tau})(V_{x}^{\tau})^2}}},
		\end{align}
		where
		\[
		B_2(t) :=\left[ kh(u_{x}^{\tau}(1,t) ) -\left( \frac{\lambda}{2}+\frac{\lambda k^2}{2}h(u_{x}^{\tau}(1,t) ) \right) \right] (V_{t}^{\tau}(1,t)) ^2\ge 0,
		\]
		follows from \eqref{lambda} and \eqref{hs}.
		
		For the term $J_1$ on the right-hand side of \eqref{p1}, in view of \eqref{eng} and \eqref{hs}, one has
		\begin{align}\label{J1}
			J_1\le \frac{CN(t)(1+\lambda)}{\delta}\int_0^1{h(u_{x}^{\tau} ) (V_{x}^{\tau} ) ^2dx}\le \frac{\lambda}{8}\int_0^1{h(u_{x}^{\tau} ) (V_{x}^{\tau} ) ^2dx},
		\end{align}
		provided that $N(t)$ is choosen sufficiently small such that $\frac{CN(t)(1+\lambda)}{\delta}\le \frac{\lambda}{8}$.
		
		For the terms $J_2$ and $J_3$, using Young's inequality and \eqref{hs},  we find that
		\begin{align}\label{J21}
			J_2\le & \frac{C}{\tau}\int_0^1{\left| V^{\tau} \right|\left| V_{t}^{\tau} \right|dx}+\frac{C}{\tau \sqrt{\delta}}\int_0^1{\sqrt{h(u_{x}^{\tau})}\left| V_{x}^{\tau} \right|\left| V_{t}^{\tau} \right|dx}+\frac{C}{\tau \sqrt{\delta}}\int_0^1{\sqrt{h(u_{x}^{\tau})}\left| V^{\tau} \right|\left| V_{x}^{\tau} \right|dx}\nonumber\\
			\le & \frac{\lambda}{8}\int_0^1{h(u_{x}^{\tau})(V_{x}^{\tau})^2dx}+\left( \frac{C}{\tau ^2\delta \lambda}+\frac{1}{2\tau} \right) \int_0^1{(V_{t}^{\tau})^2dx}+\left(\frac{C}{\tau ^2\delta \lambda}+ \frac{C}{2\tau} \right) \int_0^1{(V^{\tau})^2dx}\nonumber\\
			\le & \frac{\lambda}{8}\int_0^1{h(u_{x}^{\tau})(V_{x}^{\tau})^2dx}+\left( \frac{C}{\tau ^2\delta \lambda}+\frac{1}{2\tau} \right) \int_0^1{(V_{t}^{\tau})^2dx}+C \int_0^1{(V^{\tau})^2dx}\nonumber\\
			\le & \frac{\lambda}{8}\int_0^1{h(u_{x}^{\tau})(V_{x}^{\tau})^2dx}+\frac{1}{\tau}\int_0^1{(V_{t}^{\tau})^2dx}+C \int_0^1{(V^{\tau})^2dx},
		\end{align}
		and 
		\begin{align}\label{J22}
			J_3\le \frac{C\lambda}{\tau \delta} \int_0^1{h(u_{x}^{\tau} ) (V_{x}^{\tau} ) ^2dx}\le \frac{\lambda}{8}\int_0^1{h(u_{x}^{\tau} ) (V_{x}^{\tau} ) ^2dx},
		\end{align}
		provided $\tau\geq C\max\{\frac{1}{\lambda\delta},\frac{1}{\delta}\}$.
		
		Now let us define 
		\[
		\mathcal{F}_2(t):=\frac{1}{2}\int_0^1{(V_{t}^{\tau}  ) ^2dx}+\frac{1}{2}\int_0^1{h(u_{x}^{\tau} ) (V_{x}^{\tau} ) ^2dx}+\lambda \int_0^1{xV_{x}^{\tau}V_{t}^{\tau}dx}.
		\]
		Substituting \eqref{J1}--\eqref{J22} into \eqref{p1}, we  obtain
		\begin{align*}
			\frac{d}{dt}\mathcal{F}_2(t)+\frac{\lambda}{8}\int_0^1{(V_{t}^{\tau}  ) ^2dx}+\frac{\lambda}{8}\int_0^1{h(u_{x}^{\tau} ) (V_{x}^{\tau} ) ^2dx}+\frac{\lambda}{\tau}\int_0^1{xV_{x}^{\tau}V_{t}^{\tau}dx}\le C \int_0^1{(V^{\tau})^2dx}.
		\end{align*}
		Hence, with $\beta$ defined in \eqref{be} and assuming $\tau>\frac{\lambda}{\beta}$, applying Young's inequality gives
		\begin{align*}
			\begin{aligned}
				&\frac{d}{dt}\mathcal{F}_2(t)+\frac{\lambda}{8}\int_0^1{(V_{t}^{\tau}  ) ^2dx}+\frac{\lambda}{8}\int_0^1{h(u_{x}^{\tau} ) (V_{x}^{\tau} ) ^2dx}+\beta \int_0^1{xV_{x}^{\tau}V_{t}^{\tau}dx}\\
				&\le\left(\beta -\frac{\lambda}{\tau}\right)\int_0^1{xV_{x}^{\tau}V_{t}^{\tau}dx}+C \int_0^1{(V^{\tau})^2dx}\\
				&\le \beta \int_0^1{\left| V_{x}^{\tau} \right|\left| V_{t}^{\tau} \right|dx}+C \int_0^1{(V^{\tau})^2dx}\\
				&\le \frac{\beta}{2\sqrt{\delta}}\int_0^1{(V_{t}^{\tau}  ) ^2dx}+\frac{\beta}{2\sqrt{\delta}}\int_0^1 h(u_{x}^{\tau} ) (V_{x}^{\tau} ) ^2dx+C \int_0^1{(V^{\tau})^2dx}.
			\end{aligned}
		\end{align*}
		This implies that
		\begin{align*}
			\begin{aligned}
				\frac{d}{dt}\mathcal{F}_2(t)&+\left( \frac{\lambda}{4}-\frac{\beta}{\sqrt{\delta}} \right) \frac{1}{2}\int_0^1{(V_{t}^{\tau}  ) ^2dx}+\left( \frac{\lambda}{4}-\frac{\beta}{\sqrt{\delta}} \right) \frac{1}{2}\int_0^1{h(u_{x}^{\tau} ) (V_{x}^{\tau} ) ^2dx}\\
				&+\frac{\beta}{\lambda} \left( \lambda \int_0^1{xV_{x}^{\tau}V_{t}^{\tau}dx} \right) \le C\int_0^1{(V^{\tau})^2dx}.
			\end{aligned}
		\end{align*}
		Thus, from Lemma \ref{le3.1}, it follows that
		\begin{align*}
			\frac{d}{dt}\mathcal{F}_2(t)+2\alpha_1 \mathcal{F}_2(t)\le C\int_0^1{(V^{\tau})^2dx}\le C\left( \left\| u_{0x}^\tau \right\|^{2}+\| u_{1}^\tau\|^{2} \right)e^{-2\alpha_1 t},
		\end{align*}
		provided $N(t)\le \frac{C\lambda\delta}{1+\lambda}$ and $\tau\geq \max\{\frac{C}{\lambda\delta},\frac{C}{\delta},\frac{\beta}{\lambda}\}$. Hence, taking $\tau\ge \tau^*$, where $\tau^*>0$ is a constant independent of $t$ and $\tau$, an application of  Gr\"onwall's inequality together with Lemma \ref{le3.1} yields
		\begin{align}\label{F2est}
			\mathcal{F}_2(t) \le &\left( \mathcal{F}_2(0)+C\left(\left\| u_{0x}^\tau \right\|_1^{2}+\| u_{1}^\tau\|^{2}\right) t \right) e^{-2\alpha_1 t}\nonumber\\
			\le & \left(\frac{1}{2}\int_0^1{(V_{1}^{\tau}  ) ^2dx}+\frac{1}{2}\int_0^1{h(u_{0x}^{\tau} ) (V_{0x}^{\tau} ) ^2dx}+\lambda \int_0^1{xV_{0x}^{\tau}V_{1}^{\tau}dx}\right) e^{-2\alpha_1 t} \nonumber\\
			& + C\left(\left\| u_{0x}^\tau \right\|^{2}+\| u_{1}^\tau\|^{2}\right)e^{-2\alpha_2 t}\nonumber\\
			\le & C\left( \| V_{0x}^{\tau}\|^2+\|V_1^{\tau}\|^2+\left\| u_{0x}^{\tau} \right\|^{2}+\|u_{1}^{\tau}\|^2 \right) e^{-2\alpha _2t},
		\end{align}
		for any $0<\alpha_2< \alpha_1$.
		
		As in Lemma \ref{le3.1}, combining \eqref{F2est} with \eqref{Vini} and \eqref{hs} shows that \eqref{es3} holds. This completes the proof of Lemma \ref{le3.2}.
	\end{proof}
	
	With the first- and second-order estimates in hand, we now turn to the highest-order energy estimates.
	\begin{Lem}\label{le3.3}
		Under the assumptions of Theorem \ref{thm4}, there exist positive constants $C$ and $\tau^*$, independent of both $\tau$  and $t$, such that the following estimate holds whenever $\tau\geq\tau^*$ and $N(t)$  is sufficiently small: 
		\begin{align}\label{es40}
			\|u_{xtt}^{\tau}\|+\|u_{ttt}^{\tau}\|\le Ce^{-\alpha_2 t}\left( \left\| u_{0x}^\tau \right\|_2+\| u_{1}^\tau\|_2 \right),
		\end{align}
		where $\alpha_2$ is  defined in Lemma \ref{le3.2}.
	\end{Lem}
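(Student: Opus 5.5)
We follow the scheme of Lemmas \ref{le3.1} and \ref{le3.2} one level higher. Differentiating \eqref{sy3} in $t$ and setting $W^\tau:=V^\tau_t=u^\tau_{tt}$ gives
\begin{align*}
W_{tt}^\tau+\frac1\tau W_t^\tau-(h(u_x^\tau)W_x^\tau)_x=\big(h'(u_x^\tau)V_x^\tau V_x^\tau\big)_x-\frac1\tau\big(f'(u^\tau)W^\tau+f''(u^\tau)(V^\tau)^2\big)_x,
\end{align*}
with $W_x^\tau(0,t)=0$ and $W_x^\tau(1,t)=-kW_t^\tau(1,t)$, obtained by differentiating \eqref{e} in $t$. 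The initial data $W^\tau(\cdot,0)=V_1^\tau$ and $W^\tau_t(\cdot,0)$ are expressed through the equation in terms of $u_0^\tau,u_1^\tau$. Their norms $\|W^\tau_x(0)\|+\|W^\tau_t(0)\|$ are bounded by $C(\|u_{0x}^\tau\|_2+\|u_1^\tau\|_2)$, using $H^3\times H^2$ regularity and the compatibility conditions \eqref{comp-2}.

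We then form $\mathcal F_3(t):=\frac12\int_0^1 (W_t^\tau)^2+\frac12\int_0^1 h(u_x^\tau)(W_x^\tau)^2+\lambda\int_0^1 xW_x^\tau W_t^\tau$. The first two pieces come from multiplying by $W_t^\tau$. The third comes from the multiplier $xW_x^\tau$, treated exactly as in \eqref{m2}--\eqref{qq}, so the boundary contribution again gives $B_3(t)\ge0$ by \eqref{lambda} and \eqref{hs}.

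The quasilinear commutator terms take three forms:
\begin{itemize}
\item $\frac12\int h'(u^\tau_x)V^\tau_x(W^\tau_x)^2$ and $\frac\lambda2\int x h'u_{xx}^\tau(W^\tau_x)^2$ are absorbed as in \eqref{J1} once $N(t)$ is small.
\item The forcing $(h'(V_x^\tau)^2)_x$, tested against $W_t^\tau$ and $xW_x^\tau$, contains $h'V_x^\tau V_{xx}^\tau$. After rewriting $V_{xx}^\tau$ through \eqref{sy3} as $h^{-1}(W_t^\tau+\tau^{-1}W^\tau-h'u_{xx}^\tau V_x^\tau+\cdots)$, it is bounded by $CN(t)(\|W_t^\tau\|+\|W_x^\tau\|+\|V_x^\tau\|)\|W^\tau_t\|$. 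Part of this is absorbed, and the remainder $C\|V_x^\tau\|^2$ is controlled by Lemma \ref{le3.2}.
\item The $\tau^{-1}$ terms, involving $f'$, $f''$, $f'''$, $W^\tau$ and $V^\tau$, are handled as in \eqref{J21}--\eqref{J22}. This uses the $L^\infty$ bound on $u^\tau$ from \eqref{uinfty}, which fixes the size of $f^{(j)}(u^\tau)$, together with $\tau\ge\tau^*$. The lower-order parts are bounded by $C(\|V^\tau\|^2+\|V_x^\tau\|^2+\|W^\tau\|^2)$.
\end{itemize}

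Proceeding as before with $\beta$ from \eqref{be}, we expect
\[
\frac{d}{dt}\mathcal F_3+2\alpha_1\mathcal F_3\le C\big(\|V^\tau\|^2+\|V^\tau_x\|^2+\|W^\tau\|^2\big)\le C\big(\|u_{0x}^\tau\|_1^2+\|u_1^\tau\|_1^2\big)e^{-2\alpha_2 t},
\]
where the last step uses Lemmas \ref{le3.1} and \ref{le3.2}. Gronwall's inequality then gives $\mathcal F_3(t)\le C(\cdots)e^{-2\alpha_2' t}$. Since $\alpha_2\in(0,\alpha_1)$ is arbitrary, we may first run Lemma \ref{le3.2} with an intermediate exponent in $(\alpha_2,\alpha_1)$; the factor $te^{-2\alpha t}$ is then absorbed and the final rate is exactly $\alpha_2$. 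The equivalence $\mathcal F_3\simeq\|W_t^\tau\|^2+\|W_x^\tau\|^2$ follows as in \eqref{jie}, which yields \eqref{es40}.

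The main obstacle is the term in $V_{xx}^\tau$, that is $u_{txx}^\tau$, coming from the quasilinear forcing. It is not directly part of the energy, so it must be recovered elliptically from \eqref{sy3}, and this is where the smallness of $N(t)$ is essential. A secondary technical point is verifying that the time-differentiated boundary condition and the initial value $W_t^\tau(0)$ are legitimate, which is where \eqref{comp-2} is used.
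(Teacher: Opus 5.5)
Your proposal is correct and follows essentially the paper's argument: the same $W^\tau$-equation and boundary conditions, the same functional $\mathcal{F}_3$ with the $xW_x^\tau$ multiplier, the same elimination of $V_{xx}^\tau$ through \eqref{sy3} (the paper's terms $K_3$, $K_4$), and the same Gr\"onwall step with forcing $C(\|V^\tau\|^2+\|V_x^\tau\|^2+\|W^\tau\|^2)$. The intermediate-exponent detour is unnecessary, because $\alpha_2<\alpha_1$ strictly. Gr\"onwall with damping $2\alpha_1$ and forcing of order $e^{-2\alpha_2 t}$ (plus the $e^{-2\alpha_1 t}$ part, which gives $te^{-2\alpha_1 t}\le Ce^{-2\alpha_2 t}$) already yields the rate $\alpha_2$ directly, as in \eqref{121}.
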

	\begin{proof}
		Taking the second derivative of \eqref{sy1} with respect to $t$ and setting $W^{\tau}:= V_t^\tau=u^\tau_{tt}$, we obtain
		\begin{align*}
			W_{tt}^{\tau}+\frac{1}{\tau}W_{t}^{\tau}-\left( h(u_{x}^{\tau})W_{x}^{\tau} \right) _x=\left( h'(u_{x}^{\tau})V_{x}^{2} \right) _x-\frac{1}{\tau}\left( f'' (u^{\tau})\left( V^{\tau} \right) ^2+f'(u^{\tau})W^{\tau} \right) _x,
		\end{align*}
		where $h(s)= g'(s)s+g(s)$ satisfies \eqref{hs}.
		The corresponding boundary conditions are
		\begin{align*}
			W_{x}^{\tau}\left( 0,t \right) =0,~ W_{x}^{\tau}(1,t) =-kW_{t}^{\tau}(1,t),
		\end{align*}
		and the initial data are given by
		\begin{align}\label{Wini}
			\left\{
			\begin{aligned}
				W^\tau(x,0)=W_0^\tau(x):=& V_1^\tau, \\
				W_t^\tau(x,0)=W_1^\tau(x):=& -\frac{1}{\tau}\left(V_1^\tau+f''(u_{0}^\tau)u_{0x}^\tau V_0^\tau+f'(u_0^\tau)V_{0x}^\tau\right)\\
				&+h'(u_{0x}^\tau)u_{0xx}^\tau V_{0x}^\tau+h(u_{0x}^\tau)V_{0xx}^\tau,
			\end{aligned}
			\right.
		\end{align}
		where $V_0$ and $V_1$ are defined in \eqref{Vini}.
		
		Proceeding as in Lemmas \ref{le3.1} and  \ref{le3.2} and recalling the definition of $\lambda$ in \eqref{lambda}, we obtain
		\begin{align}\label{ww}
			&\frac{d}{dt}\left( \frac{1}{2}\int_0^1{(W_{t}^{\tau}) ^2dx}+\frac{1}{2}\int_0^1{h(u_{x}^{\tau} ) (W_{x}^{\tau}) ^2dx}+\lambda \int_0^1{xW_{x}^{\tau}W_{t}^{\tau}dx} \right) +\frac{\lambda}{\tau}\int_0^1{xW_{x}^{\tau}W_{t}^{\tau}dx}\nonumber
			\\
			&\quad+\left( \frac{1}{\tau}+\frac{\lambda}{2} \right) \int_0^1{(W_{t}^{\tau}) ^2dx}+\frac{\lambda}{2}\int_0^1{h(u_{x}^{\tau})(W_{x}^{\tau}) ^2dx}+B_3(t)\nonumber
			\\
			&=\underset{K_1}{\underbrace{\frac{1}{2}\int_0^1{h' (u_{x}^{\tau} ) V_{x}^{\tau}(W_{x}^{\tau}) ^2dx}+\frac{\lambda}{2}\int_0^1{xh' (u_{x}^{\tau} ) u_{xx}^{\tau}(W_{x}^{\tau}) ^2dx}}}\nonumber
			\\
			&\quad+\underset{K_2}{\underbrace{\int_0^1{h''(u_{x}^{\tau} ) u_{xx}^{\tau}(V_{x}^{\tau} ) ^2W_{t}^{\tau}}dx+\lambda \int_0^1{xh''(u_{x}^{\tau} ) u_{xx}^{\tau}(V_{x}^{\tau} ) ^2W_{x}^{\tau}}dx}}\nonumber
			\\
			&\quad+\underset{K_3}{\underbrace{2\int_0^1{\frac{h' (u_{x}^{\tau} )}{h(u_{x}^{\tau} )}}\left[ W_{t}^{\tau}+\frac{1}{\tau}W^{\tau}-h' (u_{x}^{\tau} ) u_{xx}^{\tau}V_{x}^{\tau}+\frac{1}{\tau}\left( f''(u^{\tau} ) u_{x}^{\tau}V^{\tau}+f'(u^{\tau} ) V_{x}^{\tau} \right) \right] V_{x}^{\tau}W_{t}^{\tau}dx}}\nonumber
			\\
			&\quad\underset{K_4}{\underbrace{-2\lambda \int_0^1{x\frac{h' (u_{x}^{\tau} )}{h(u_{x}^{\tau} )}}\left[ W_{t}^{\tau}+\frac{1}{\tau}W^{\tau}-h'(u_{x}^{\tau} ) u_{xx}^{\tau}V_{x}^{\tau}+\frac{1}{\tau}\left( f''(u^{\tau} ) u_{x}^{\tau}V^{\tau}+f'(u^{\tau} ) V_{x}^{\tau} \right) \right] V_{x}^{\tau}W_{x}^{\tau}dx}}\nonumber
			\\
			&\quad\underset{K_5}{\underbrace{-\frac{1}{\tau}\int_0^1{\left[ f''' (u^{\tau})u_{x}^{\tau}(V^{\tau})^2+2f'' (u^{\tau})V^{\tau}V_{x}^{\tau}+f'' (u^{\tau})u_{x}^{\tau}W^{\tau}+f'(u^{\tau})W_{x}^{\tau} \right] W_{t}^{\tau}dx}}}\nonumber
			\\
			&\quad\underset{K_6}{\underbrace{-\frac{\lambda}{\tau}\int_0^1{x\left[ f''' (u^{\tau})u_{x}^{\tau}(V^{\tau})^2+2f'' (u^{\tau})V^{\tau}V_{x}^{\tau}+f'' (u^{\tau})u_{x}^{\tau}W^{\tau}+f'(u^{\tau})W_{x}^{\tau} \right] W_{x}^{\tau}dx}}},
		\end{align}
		where
		\[
		B_3(t) :=\left[ kh(u_{x}^{\tau}(1,t) ) -\left( \frac{\lambda}{2}+\frac{\lambda k^2}{2}h(u_{x}^{\tau}(1,t) ) \right) \right] (W_{t}^{\tau}(1,t)) ^2\ge 0,
		\]
		by \eqref{lambda} and \eqref{hs}.
		
		Now define 
		\[
		\mathcal{F}_3(t):=\frac{1}{2}\int_0^1{\left(W_{t}^{\tau} \right) ^2dx}+\frac{1}{2}\int_0^1{h(u_{x}^{\tau} ) (W_{x}^{\tau}) ^2dx}+\lambda \int_0^1{xW_{x}^{\tau}W_{t}^{\tau}dx}.
		\]
		The estimates for $K_1$--$K_6$ on the right-hand side of \eqref{ww} are similar to those in Lemma \ref{le3.1} and Lemma \ref{le3.2}. Thus, we obtain 
		\begin{align*}
			\frac{d}{dt}\mathcal{F}_3(t)+2\alpha_1\mathcal{F}_3(t)\le& C\left( \int_0^1{(V^{\tau})^2dx}+\int_0^1{(V_{x}^{\tau})^2dx}+\int_0^1{(W^{\tau})^2dx} \right) \nonumber\\
			\le &C\left( \left\| u_{0x}^{\tau} \right\| ^2+\| u_{1}^{\tau}\| ^2 \right) e^{-2\alpha_1t}+C\left( \left\| u_{0x}^{\tau} \right\| _{1}^{2}+\| u_{1}^{\tau}\| _{1}^{2} \right) e^{-2\alpha_2t},
		\end{align*}
		provided $N(t)$ is sufficiently small and $\tau\geq \tau^*$ for some $\tau^*>0$ independent of $t$ and $\tau$. 
		
		Hence, by Gr\"onwall's inequality, we have 
		\begin{align}\label{121}
			&\mathcal{F}_3(t) \nonumber\\
			\le& \mathcal{F}_3(0)e^{-2\alpha_1t}+C\left( \left\| u_{0x}^{\tau} \right\| ^2+\|u_{1}^{\tau}\|^2 \right) te^{-2\alpha_1t}+C\left( \left\| u_{0x}^{\tau} \right\| _{1}^{2}+\| u_{1}^{\tau}\| _{1}^{2} \right) e^{-2\alpha_1t}\int_0^t{e^{2\left( \alpha_1-\alpha_2 \right) s}}ds\nonumber\\
			=&\mathcal{F}_3(0)e^{-2\alpha_1t}+C\left( \left\| u_{0x}^{\tau} \right\| _{1}^{2}+\| u_{1}^{\tau}\| ^2 \right) te^{-2\alpha_1t}+\frac{C\left( \left\| u_{0x}^{\tau} \right\| _{1}^{2}+\| u_{1}^{\tau}\| _{1}^{2} \right)}{2\alpha_1-2\alpha_2}\left(e^{2\left( \alpha_1-\alpha_2 \right) t}-1 \right) e^{-2\alpha_1t}\nonumber\\
			\le &\mathcal{F}_3(0)e^{-2\alpha_1t}+C\left( \left\| u_{0x}^{\tau} \right\| _{1}^{2}+\| u_{1}^{\tau}\| ^2 \right) te^{-2\alpha_1t}+\frac{1}{2\alpha_1-2\alpha_2}C\left( \left\| u_{0x}^{\tau} \right\| _{1}^{2}+\| u_{1}^{\tau}\| _{1}^{2} \right) e^{-2\alpha_2t}\nonumber\\
			\le& \left( \frac{1}{2}\int_0^1{(W_{1}^{\tau})^2dx}+\frac{1}{2}\int_0^1{h(u_{0x}^{\tau})(W_{0x}^{\tau})^2dx}+\lambda \int_0^1{xW_{0x}^{\tau}W_{1}^{\tau}dx} \right) e^{-2\alpha_1t}\nonumber\\
			&+C\left( \left\| u_{0x}^{\tau} \right\| _{1}^{2}+\| u_{1}^{\tau}\| ^2 \right) e^{-2\alpha_2t}+\frac{1}{2\alpha_1-2\alpha_2}C\left( \left\| u_{0x}^{\tau} \right\| _{1}^{2}+\| u_{1}^{\tau}\| _{1}^{2} \right) e^{-2\alpha_2t}\nonumber\\
			\le &C\left( \| W_{0x}^{\tau}\| ^2+\| W_{1}^{\tau}\| ^2+\left\| u_{0x}^{\tau} \right\| _{1}^{2}+\| u_{1}^{\tau}\| _{1}^{2} \right) e^{-2\alpha_2t},
		\end{align}
		since $\alpha_2<\alpha_1$.
		
		As in the proofs of Lemmas \ref{le3.1} and   \ref{le3.2}, combining \eqref{121} with \eqref{Wini} and \eqref{hs}  shows that \eqref{es40} holds. This completes the proof of Lemma \ref{le3.3}.
	\end{proof}
	
	The energy estimates obtained in Lemmas \ref{le3.1}--\ref{le3.3} involve time derivatives and mixed derivatives, but do not directly provide control over the pure spatial derivatives $\|u_{xx}^{\tau}\|$, $\|u_{xxt}^{\tau}\|$ and $\|u_{xxx}^{\tau}\|$. To close the a priori estimates and complete the proof of Proposition \ref{priori}, we require the following supplementary estimates on the pure spatial derivatives.
	\begin{Lem}\label{le3.4}
		Under the assumptions of Theorem \ref{thm4}, there exist positive constants $C$ and $\tau^*$, independent of both $\tau$  and $t$, such that the following estimates hold whenever $\tau\geq\tau^*$ and $N(t)$  is sufficiently small: 
		\begin{align}\label{es4}
			\|u_{xx}^{\tau}\|\le Ce^{-\alpha_2 t}\left( \left\| u_{0x}^{\tau} \right\| _{1}+\| u_{1}^{\tau}\| _{1} \right) ,
		\end{align}
		and 
		\begin{align}\label{es5}
			\|u_{xxt}^{\tau}\|+\| u_{xxx}^{\tau}\|\le Ce^{-\alpha_2 t}\left( \left\| u_{0x}^{\tau} \right\| _{2}+\|u_{1}^{\tau}\|_{2} \right), 
		\end{align}
		where $\alpha_2$ is defined in Lemma \ref{le3.2}.
	\end{Lem}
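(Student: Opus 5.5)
We read the pure spatial derivatives off the equation itself; no new energy functional is needed. Expanding the divergence in \eqref{sy1} gives
\[
h(u_x^\tau)u_{xx}^\tau=u_{tt}^\tau+\frac{1}{\tau}u_t^\tau+\frac{1}{\tau}f'(u^\tau)u_x^\tau ,
\]
where $h(s)=g'(s)s+g(s)\ge\delta$ by \eqref{hs}. Dividing by $h(u_x^\tau)$ and taking $L^2$ norms yields
\[
\|u_{xx}^\tau\|\le \frac{1}{\delta}\Big(\|u_{tt}^\tau\|+\frac{1}{\tau}\|u_t^\tau\|+\frac{C}{\tau}\|u_x^\tau\|\Big).
\]
The coefficient $f'(u^\tau)$ is bounded uniformly: by \eqref{uinfty}, $\|u^\tau\|_{L^\infty}\le\|u_0^\tau\|+CN(t)$, so $|f'(u^\tau)|$ is controlled by a constant depending only on the initial data and $f$. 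We then apply \eqref{es1} to $\|u_t^\tau\|$ and $\|u_x^\tau\|$, and \eqref{es3} to $\|u_{tt}^\tau\|$. Since $\alpha_2<\alpha_1$, this gives \eqref{es4}.

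For \eqref{es5}, first differentiate the identity in $t$:
\[
h(u_x^\tau)u_{xxt}^\tau=u_{ttt}^\tau+\frac{1}{\tau}u_{tt}^\tau+\frac{1}{\tau}\big(f''(u^\tau)u_t^\tau u_x^\tau+f'(u^\tau)u_{xt}^\tau\big)-h'(u_x^\tau)u_{xt}^\tau u_{xx}^\tau .
\]
Each term on the right is controlled as follows. $\|u_{ttt}^\tau\|$ is bounded by Lemma \ref{le3.3}. $\|u_{tt}^\tau\|$ and $\|u_{xt}^\tau\|$ are bounded by Lemma \ref{le3.2}. The quadratic term satisfies
\[
\|h'(u_x^\tau)u_{xt}^\tau u_{xx}^\tau\|\le C\|u_{xt}^\tau\|_{L^\infty}\|u_{xx}^\tau\|\le CN(t)\|u_{xx}^\tau\|
\]
by \eqref{eng}, and $\|u_{xx}^\tau\|$ was just estimated. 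Next, differentiate the identity in $x$:
\[
h(u_x^\tau)u_{xxx}^\tau=u_{ttx}^\tau+\frac{1}{\tau}u_{tx}^\tau+\frac{1}{\tau}\big(f''(u^\tau)(u_x^\tau)^2+f'(u^\tau)u_{xx}^\tau\big)-h'(u_x^\tau)(u_{xx}^\tau)^2 .
\]
Here $\|u_{xtt}^\tau\|$ comes from \eqref{es40}, and the term $(u_{xx}^\tau)^2$ is bounded by $\|u_{xx}^\tau\|_{L^\infty}\|u_{xx}^\tau\|\le CN(t)\|u_{xx}^\tau\|$. Every right-hand side then decays like $e^{-\alpha_2 t}$ times $\|u_{0x}^\tau\|_2+\|u_1^\tau\|_2$. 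Products of two decaying factors decay at least as fast, and $N(t)\le\epsilon$ is bounded, so nothing is lost.

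The main obstacle is the nonlinear terms involving $h'$. They are quadratic in higher derivatives, and bounding them by $N(t)$ uses the $L^\infty$ control of $u_{xx}^\tau$ and $u_{xt}^\tau$ from \eqref{eng}. This is legitimate under the a priori hypothesis \eqref{pri} and does not cause circularity: the factor $N(t)$ multiplies only lower-order quantities already estimated, such as $\|u_{xx}^\tau\|$. Equivalently, it can be absorbed into the left side once $CN(t)/\delta<1/2$. A secondary point is that the constants must be independent of $\tau$. This holds because every $\tau$-dependence enters as $1/\tau\le 1/\tau^*$, and $\tau\ge\tau^*$ keeps the requirements of Lemmas \ref{le3.1}--\ref{le3.3} in force.
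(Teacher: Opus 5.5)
Your proposal is correct and is essentially the paper's proof: solve \eqref{71} for $u_{xx}^\tau$ using $h\ge\delta$, then solve its $t$- and $x$-derivatives for $u_{xxt}^\tau$ and $u_{xxx}^\tau$, and bound every right-hand-side term by Lemmas \ref{le3.1}--\ref{le3.3} together with \eqref{es4}. Your explicit treatment of the quadratic $h'$-terms through the $L^\infty$ bounds in \eqref{eng} fills in a step the paper leaves implicit; no absorption argument is needed there, since the small factor $N(t)$ only multiplies the already-estimated $\|u_{xx}^\tau\|$.
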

	\begin{proof}
		Rewriting equation \eqref{sy1}, we obtain
		\begin{align}\label{71}
			u_{tt}^{\tau}+\frac{1}{\tau}u_{t}^{\tau}-h(u_{x}^{\tau})u_{xx}^{\tau}=-\frac{f'(u^{\tau})u_{x}^{\tau}}{\tau},
		\end{align}
		which yields
		\begin{align*}
			u_{xx}^{\tau}=\frac{1}{h(u_{x}^{\tau})}\left[ u_{tt}^{\tau}+\frac{1}{\tau}u_{t}^{\tau}+\frac{f'(u^{\tau})u_{x}^{\tau}}{\tau} \right].
		\end{align*}
		Hence, from Lemma \ref{le3.1} and Lemma \ref{le3.2} we immediately obtain \eqref{es4}.
		
		Differentiating \eqref{71} with respect to $t$ gives
		\begin{align*}
			u_{ttt}^{\tau}+\frac{1}{\tau}u_{tt}^{\tau}-h'(u_{x}^{\tau})u_{xt}^{\tau}u_{xx}^{\tau}-h(u_{x}^{\tau})u_{xxt}^{\tau}=-\frac{f''(u^{\tau})u_{t}^{\tau}u_{x}^{\tau}+f'(u^{\tau})u_{xt}^{\tau}}{\tau},
		\end{align*}
		and consequently,
		\begin{align}\label{72}
			u_{xxt}^{\tau}=\frac{1}{h(u_{x}^{\tau})}\left[ u_{ttt}^{\tau}+\frac{1}{\tau}u_{tt}^{\tau}-h'(u_{x}^{\tau})u_{xt}^{\tau}u_{xx}^{\tau}+\frac{f''(u^{\tau})u_{t}^{\tau}u_{x}^{\tau}+f'(u^{\tau})u_{xt}^{\tau}}{\tau} \right].
		\end{align}
		
		Finally, differentiating \eqref{71} with respect to $x$ yields
		\begin{align*}
			u_{xtt}^{\tau}+\frac{1}{\tau}u_{xt}^{\tau}-h'(u_{x}^{\tau})(u_{xx}^{\tau})^{2}-h(u_{x}^{\tau})u_{xxx}^{\tau}=-\frac{f''(u^{\tau})(u_{x}^{\tau})^{2}+f'(u^{\tau})u_{xx}^{\tau}}{\tau},
		\end{align*}
		hence
		\begin{align}\label{73}
			u_{xxx}^{\tau}=\frac{1}{h(u_{x}^{\tau})}\left[ u_{xtt}^{\tau}+\frac{1}{\tau}u_{xt}^{\tau}-h'(u_{x}^{\tau})(u_{xx}^{\tau})^{2}+\frac{f''(u^{\tau})(u_{x}^{\tau})^{2}+f'(u^{\tau})u_{xx}^{\tau}}{\tau} \right].
		\end{align}
		
		From \eqref{72} and \eqref{73}, together with Lemmas \ref{le3.1}--\ref{le3.3} and \eqref{es4}, we conclude that \eqref{es5} holds.
	\end{proof}
	
	With the a priori estimates now fully established in Lemmas \ref{le3.1}--\ref{le3.4}, we proceed to complete the proof of Theorem \ref{thm4} by combining these estimates with the local existence theory.
	
	\begin{proof}[Proof of Theorem \ref{thm4}]
		By virtue of the {\it a priori} bounds obtained in Lemmas \ref{le3.1}--\ref{le3.4}, the proof of Proposition  \ref{priori} reduces to establishing the exponential decay estimate \eqref{eest}. To this end, we first recall from Lemma \ref{le3.1} that
		\begin{align}\label{uxut}
			\|u_x^\tau\|+\|u_t^\tau\|\le Ce^{-\alpha t}(\|u_{0x}^\tau\|+\|u_1^\tau\|). 
		\end{align}
		This immediately implies
		\begin{align*}
			\left|\frac{d}{dt}\int_0^1 u^\tau(x,t)dx\right|
			\le \int_0^1 |u_t^\tau|dx
			\le \|u_t^\tau\|
			\le Ce^{-\alpha t}(\|u_{0x}^\tau\|+\|u_1^\tau\|),
		\end{align*}
		where we have used the Cauchy--Schwarz inequality. Hence the time derivative of the function $t\mapsto \int_0^1 u^\tau(x,t)dx$ is absolutely integrable, and therefore the limit
		\begin{eqnarray*}
			u^\tau_*:&=&\lim_{t\to\infty}\int_0^1 u^\tau(x,t)dx\\
			&=&\lim_{t\to\infty}\Big[\int^1_0 u^\tau_0(x) dx + \int_0^t [v^\tau(0,s)-v^\tau(1,s)]ds\Big]\\
			&=& \int^1_0 u^\tau_0(x) dx + \int_0^\infty [v^\tau(0,t)-v^\tau(1,t)]dt
		\end{eqnarray*}
		exists and is a constant. Here we used the equation $u^\tau_t+v^\tau_x=0$ and integrated it over $[0,1]\times [0,t]$ with respect to $x$ and $t$. The term $\int_0^\infty [v^\tau(0,t)-v^\tau(1,t)]dt$ represents the total net flux of the traffic flow. Furthermore, for any $t\ge0$, it holds
		\begin{align*}
			\left\|\int_0^1u^\tau(x,t)dx-u^\tau_*\right\|=&\left|\int_0^1u^\tau(x,t)dx-u^\tau_*\right|\\
			=&\left|\int_t^\infty\frac{d}{ds}\int_0^1u^\tau(x,s)dxds\right|\\
			\le& C(\|u_{0x}^\tau\|+\|u_1^\tau\|)\int_t^\infty e^{-\alpha s}ds\\
			\le & Ce^{-\alpha t}(\|u_{0x}^\tau\|+\|u_1^\tau\|).
		\end{align*}
		On the other hand, applying the Poincar\'e inequality together with \eqref{uxut} gives
		\begin{align*}
			\left\|u^\tau(\cdot,t)-\int_0^1 u^\tau(x,t)dx\right\|
			\le C\|u^\tau_x(\cdot,t)\|
			\le Ce^{-\alpha t}(\|u_{0x}^\tau\|+\|u_1^\tau\|).
		\end{align*}
		Combining the two estimates above, we obtain
		\begin{align*}
			\|u^\tau(\cdot,t)-u^*\|
			\le& \left\|u^\tau(\cdot,t)-\int_0^1 u^\tau(x,t)dx\right\|
			+\left\|\int_0^1 u^\tau(x,t)dx-u^*\right\| \\
			\le& Ce^{-\alpha t}(\|u_{0x}^\tau\|+\|u_1^\tau\|),
		\end{align*}
		which is precisely the desired estimate \eqref{eest}. Proposition \ref{priori} is therefore proved.
		
		Having established Proposition \ref{priori} and the local existence result from Proposition \ref{local}, we now proceed to prove Theorem \ref{thm4}. To this end, we assume
		\begin{align}\label{yantuo N(0)}
			N(0)\le \epsilon^*:=\min\left\{\frac{\epsilon}{2(1+C_1)},\frac{\epsilon_0}{1+C_1},\frac{\|u_0^\tau\|}{C_2}\right\}\le \epsilon_0.
		\end{align}
		Then, by Proposition \ref{local}, there exists a solution $u^\tau\in X(0,T_0)$ satisfying
		\begin{align}\label{yantuo3.54}
			N(t)\le 2N(0)\le 2\epsilon^* \le \epsilon, \quad \text{ for any } t\in[0,T_0].
		\end{align}
		Applying Proposition \ref{priori} together with \eqref{yantuo3.54}, we obtain that \eqref{est}, \eqref{eest} and \eqref{3.11} hold on $[0,T_0]$. Consequently, combining \eqref{3.11} with \eqref{yantuo N(0)} yields
		\begin{align*}
			\|u^\tau\|\le \|u_0\|+C_2N(t)\le \|u_0\|+C_2\epsilon^*\le 2\|u_0\| \quad \text{ for any } t\in[0,T_0],
		\end{align*}
		which implies that \eqref{eeest} holds on $[0,T_0]$.
		
		Now, setting $t=T_0$ in \eqref{est} and recalling \eqref{yantuo N(0)}, we obtain
		\begin{align*}
			N(T_0)\le C_1N(0)\le C_1\epsilon^*\le \epsilon_0.
		\end{align*}
		
		This, together with Proposition \ref{local} once again, allows us to extend the solution to the interval $[T_0,2T_0]$, yielding $u^\tau\in X(T_0,2T_0)$ satisfying
		\begin{align*}
			N(t)\le 2N(T_0)\le 2C_1\epsilon^* \le \epsilon, \quad \text{ for any } t\in[T_0,2T_0].
		\end{align*}
		Combining this with \eqref{yantuo3.54}, we obtain
		\begin{align*}
			N(t) \le \epsilon, \quad \text{ for any } t\in[0,2T_0].
		\end{align*}
		Applying Proposition \ref{priori} once again, we conclude that \eqref{est}, \eqref{eest}, \eqref{3.11}, and hence \eqref{eeest}, hold on $[0,2T_0]$.
		
		Hence, by repeating this continuation argument, we eventually can derive a unique global solution $u^\tau\in X(0,\infty)$ satisfying \eqref{est}, \eqref{eeest}, and \eqref{eest} for all $t\in[0,\infty)$. This completes the proof of Theorem \ref{thm4}.
	\end{proof}
	
	\subsection{Proof of Theorem \ref{thm1}}\label{sec3.3}
	
	With Theorem \ref{thm4} in hand, we are now in a position to prove our main result.
	
	\begin{proof}[Proof of Theorem \ref{thm1}]
		Although the existence of $u^\tau$ is guaranteed by Theorem \ref{thm4}, Theorem \ref{thm1} does not follow directly, since the component $v^\tau$ must be reconstructed. From \eqref{a} we see that $v^\tau$ satisfies the ODE
		\[
		v_t^\tau + \frac{1}{\tau}v^\tau = -g(u_x^\tau)u_x^\tau + \frac{f(u^\tau)}{\tau},
		\]
		with the initial condition $v^\tau(x,0)=v_0^\tau(x)$. Solving this linear ODE explicitly gives
		\begin{align}\label{vtau}
			v^\tau = e^{-\frac{t}{\tau}}v_0^\tau - \int_0^t e^{-\frac{t-s}{\tau}} g(u_x^\tau)u_x^\tau ds+ \int_0^t e^{-\frac{t-s}{\tau}} \frac{f(u^\tau)}{\tau} ds,
		\end{align}
		where $u^\tau$ is the global solution provided by Theorem \ref{thm4}. Hence the pair $(u^\tau, v^\tau)$ is a global solution to problem \eqref{a}--\eqref{b}.
		
		We next derive the required estimates for $v^\tau$. From the first equation in \eqref{a} we have $v_x^\tau = -u_t^\tau$; combining this with \eqref{est} yields
		\[
		\|v_x^\tau\|_2 = \|u_t^\tau\|_2 \le C e^{-\alpha t} (\|u_{0x}^\tau\|_2+\|u_1^\tau\|_2),
		\]
		which, together with \eqref{est}, implies \eqref{tauest}.
		
		It remains to establish the convergence estimate \eqref{tauest2}. By Theorem \ref{thm4}, we obtain
		\begin{align}\label{pr3.53}
			\|g(u_x^\tau)u_x^\tau\|\le C\|u_x^\tau\|\le Ce^{-\alpha t}(\|u_{0x}^\tau\|+\|u_1^\tau\|),
		\end{align}
		and since $f$ is smooth and hence locally Lipschitz, it follows that
		\begin{align}\label{pr3.54}
			\|f(u^\tau)-f(u^\tau_*)\|\le C\|u^\tau-u^\tau_*\|\le Ce^{-\alpha t}(\|u_{0x}^\tau\|+\|u_1^\tau\|).
		\end{align}
		Now define $v^\tau_*:=f(u^\tau_*)$, and note that
		\begin{align*}
			\frac{1}{\tau}\int_0^te^{-\frac{t-s}{\tau}}ds=1-e^{-\frac{t}{\tau}}.
		\end{align*}
		Using this fact, we may rewrite \eqref{vtau} as
		\begin{align}\label{revtau}
			v^\tau-v^\tau_*=e^{-\frac{t}{\tau}}(v_0^\tau-f(u^\tau_*))-\int_0^t e^{-\frac{t-s}{\tau}} g(u_x^\tau)u_x^\tau ds+ \int_0^t e^{-\frac{t-s}{\tau}} \frac{f(u^\tau)-f(u^\tau_*)}{\tau}ds.
		\end{align}
		Combining \eqref{pr3.53}, \eqref{pr3.54}, and \eqref{revtau}, we obtain
		\begin{align}\label{vtau-v*}
			\|v^\tau-v^\tau_*\|\le& e^{-\frac{t}{\tau}}\|v_0^\tau-f(u^\tau_*)+f(0)-f(0)\|+C\left(1+\frac{1}{\tau}\right)(\|u_{0x}^\tau\|
			+\|u_1^\tau\|)\int_0^te^{-\frac{t-s}{\tau}}e^{-\alpha s}ds\nonumber\\
			\le& Ce^{-\frac{t}{\tau}}\left(\|v_0^\tau\|+u^\tau_*+|f(0)|\right)+ C\left(1+\frac{1}{\tau}\right)(\|u_{0x}^\tau\|+\|u_0\|+\|u_1^\tau\|)\int_0^te^{-\frac{t-s}{\tau}}e^{-\alpha s}ds\nonumber\\
			\le& C\left(\|v_0^\tau\|+\|u_0^\tau\|_1+\|u_1^\tau\|+|f(0)|\right)e^{-\frac{t}{\tau}}\nonumber\\
			&+C\left(1+\frac{1}{\tau}\right)(\|u_{0x}^\tau\|+\|u_1^\tau\|)\int_0^te^{-\frac{t-s}{\tau}}e^{-\alpha s}ds,
		\end{align}
		where we have used \eqref{es2} and the fact that 
		$$u^\tau_*=\lim\limits_{t\to\infty}\int_0^1u^{\tau}dx\le C\|u_0^{\tau}\|+C(\|u_{0x}^\tau\|+\|u_1^\tau\|).$$
		A direct computation yields
		\begin{align}\label{integ}
			\int_0^te^{-\frac{t-s}{\tau}}e^{-\alpha s}ds\le
			\left\{\begin{aligned}
				&C\frac{\tau}{|1-\tau\alpha|}e^{-\min\{\alpha,\frac{1}{\tau}\}t}, \quad &\text{ if }& \tau\neq\frac{1}{\alpha},\\
				&C\frac{\alpha}{\alpha-\alpha'}e^{-\alpha't}, &\text{ if }& \tau=\frac{1}{\alpha},
			\end{aligned}\right.
		\end{align}
		for any $0<\alpha'<\alpha$. Substituting \eqref{integ} into \eqref{vtau-v*}, we obtains  
		\begin{align}\label{v-v*}
			\|v^\tau-v^\tau_*\|\le\left\{\begin{aligned}
				&C\frac{1+\tau}{|1-\tau\alpha|}\left(\|v_0^\tau\|+\|u_0^\tau\|_1+\|u_1^\tau\|+|f(0)|\right)e^{-\min\{\alpha,\frac{1}{\tau}\}t}, \quad &\text{ if }& \tau\neq\frac{1}{\alpha}\\
				&C\frac{1+\alpha}{\alpha-\alpha'}\left(\|v_0^\tau\|+\|u_0^\tau\|_1+\|u_1^\tau\|+|f(0)|\right)e^{-\alpha't}, &\text{ if }& \tau=\frac{1}{\alpha},
			\end{aligned}\right.
		\end{align}
		
		The estimate \eqref{v-v*}, together with \eqref{eest}, yields exactly \eqref{tauest2} and \eqref{tauest3}. Therefore, the proof of Theorem \ref{thm1} is complete.
	\end{proof}

	\section{Convergence in the  Large-Relaxation-Time Limit}\label{sec4}
	
	In this section, we investigate the infinite relaxation limit for the system \eqref{a}--\eqref{b}. We shall show that the solutions $(u^\tau, v^\tau)$ of \eqref{a}--\eqref{b}, obtained in Theorem \ref{thm1}, converge  to $(\hat{u}, \hat{v})$, the solutions of the nonlinear wave  equations  \eqref{hatuv-equa} established in Corollary \ref{corhat}, as $\tau \to \infty$.
	
	To facilitate the subsequent analysis, we introduce the difference functions
	\begin{align*}
		(\phi^\tau, \psi^\tau) := (u^\tau - \hat{u}, v^\tau - \hat{v}),
	\end{align*}
	which measure the deviation between the approximate solution and the limiting solution. Subtracting the  equations \eqref{hatuv-equa} from the original system \eqref{a}--\eqref{b}, we obtain the governing equations for the difference functions:
	\begin{align}\label{33}  
		\begin{cases}
			\phi_t^\tau+\psi_x^\tau=0,\\[1mm]
			\psi _{t}^{\tau}+\bigl( g(\phi _{x}^{\tau}+\hat{u}_x ) (\phi _{x}^{\tau}+\hat{u}_x ) -g(\hat{u}_x) \hat{u}_x \bigr) =\frac{f( \phi ^{\tau}+\hat{u}) -(\psi ^{\tau}+\hat{v})}{\tau}.
		\end{cases}
	\end{align}
	The initial and boundary conditions are now transformed into
	\begin{align*}
		\begin{cases}
			\phi^{\tau}(x,0)=\phi_0^{\tau}(x):=u^\tau_0(x)-\hat{u}_0(x),\\[1mm]
			\psi^{\tau}(x,0)=\psi_0^{\tau}(x):=v^\tau_0(x)-\hat{v}_0(x),
		\end{cases}
	\end{align*}
	and
	\begin{align*}
		\phi _x^{\tau} (0,t)=0, \quad  \phi _x^{\tau}(1,t)=-k\phi _t^{\tau} (1,t).
	\end{align*}
	From the first equation of \eqref{33} we obtain
	\begin{align}\label{psi.}
		\psi^{\tau}_{xt}=-\phi^{\tau}_{tt}.
	\end{align}
	Set $R(s):=sg(s)$. Differentiating the second equation of \eqref{33} with respect to $x$ gives
	\begin{align}\label{phitx}
		\psi _{tx}^{\tau}+\bigl( R(\phi _{x}^{\tau}+\hat{u}_x ) -R(\hat{u}_x) \bigr) _x = \frac{f( \phi ^{\tau}+\hat{u}) _x-(\psi ^{\tau}+\hat{v})_x}{\tau}.
	\end{align}
	Substituting \eqref{psi.} into \eqref{phitx}, we obtain a closed equation for $\phi^{\tau}$:
	\begin{align}\label{34}
		\phi^\tau _{tt}+\frac{1}{\tau}\phi^\tau_t-\bigl( R(\phi _{x}^{\tau}+\hat{u}_x ) -R(\hat{u}_x) \bigr) _x = -\frac{f(\phi ^{\tau}+\hat{u}) _x-\hat{v}_x}{\tau}.
	\end{align}
	Thus, equation \eqref{34} can be rewritten as
	\begin{align}\label{35}
		\phi^\tau _{tt}+\frac{1}{\tau}\phi^\tau_t-\bigl(R'(\hat{u}_x) \phi^\tau_x \bigr)_{x} = \frac{1}{\tau}F_x+G_{x},
	\end{align}
	where 
	\begin{align}\label{F}
		F:=-\bigl[f(\phi ^\tau+ \hat{u} ) -\hat{v}\bigr],
	\end{align}
	and 
	\begin{align}\label{G}
		G:=R(\phi^\tau_x+\hat{u}_x )-R(\hat{u}_x)-R'(\hat{u}_x)\phi^\tau_x.
	\end{align}
	By assumptions \eqref{h1} and \eqref{h2}, we have
	\begin{align}\label{rs}
		R'(s)=g'(s)s+g(s)\ge \delta>0, \quad \text{for all } s\in \mathbb{R}.
	\end{align}
	The corresponding initial and boundary data for the scalar equation \eqref{34} are
	\begin{align}\label{36}
		\begin{cases}
			\phi^\tau(x,0)=\phi^\tau_0(x),\\[0.5mm]
			\phi_t^\tau(x,0)= \phi_1^\tau(x):=-\psi_{0x}^\tau(x), \\[0.5mm]
			\phi^\tau_x(0,t)=0,\\[0.5mm]
			\phi^\tau_x(1,t)=-k\phi^\tau_t(1,t).   
		\end{cases}
	\end{align}

	Before deriving estimates for equation \eqref{35}, we first state the following observation, which follows from the estimates already established.
	
	\begin{Lem}\label{lem20}
		Assume that $\tau\ge \tau^{*}$ with $\tau^{*}>0$ as determined in Theorem \ref{thm1}, and that the initial data satisfy
		\[\|(u_{0x}^\tau,\hat{u}_{0x})\|_{2} + \| (v_{0x}^\tau,\hat{v}_{0x})\|_{2}\leq \varepsilon_3,\]
		where $0<\varepsilon_3\le \min\{\varepsilon_1,\varepsilon_2\}$ is a constant independent of $t$ and $\tau$. Then there exists a constant $C>0$, independent of $t$ and $\tau$, such that
		\begin{align}\label{uxx}
			\|u^\tau_{xx}\|_{L^\infty}+\|\hat{u}_{xt}\|_{L^\infty}+\|\hat{u}_{xx}\|_{L^\infty}+\|\phi^\tau_{xx}\|_{L^\infty}\le C\varepsilon_3.
		\end{align}
	\end{Lem}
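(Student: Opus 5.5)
The plan is to combine the uniform $H^2$-type decay bounds already available for $u^\tau$ and $\hat u$ with the one-dimensional Sobolev embedding $H^1(0,1)\hookrightarrow L^\infty(0,1)$, i.e. $\|w\|_{L^\infty}\le C\|w\|_1$. No new energy estimate is needed.

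For $u^\tau_{xx}$, I invoke Theorem \ref{thm1}. Under the smallness $\|u^\tau_{0x}\|_2+\|v^\tau_{0x}\|_2\le\varepsilon_3\le\varepsilon_1$ and $\tau\ge\tau^*$, estimate \eqref{tauest} gives
\[
\|u^\tau_x(\cdot,t)\|_2\le C(\|u^\tau_{0x}\|_2+\|v^\tau_{0x}\|_2)e^{-\sigma t}\le C\varepsilon_3 .
\]
Hence $\|u^\tau_{xx}\|_1\le C\varepsilon_3$, and $\|u^\tau_{xx}\|_{L^\infty}\le C\|u^\tau_{xx}\|_1\le C\varepsilon_3$ by the embedding. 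The constant $C$ is independent of $\tau$ and $t$ because the constant in \eqref{tauest} is.

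For $\hat u$, I apply Corollary \ref{corhat}, which is applicable since $\|\hat u_{0x}\|_2+\|\hat v_{0x}\|_2\le\varepsilon_3\le\varepsilon_2$. Estimate \eqref{uvhat1} gives $\|\hat u_x\|_2+\|\hat v_x\|_2\le C\varepsilon_3$, so the embedding applied to $\hat u_{xx}$ yields $\|\hat u_{xx}\|_{L^\infty}\le C\varepsilon_3$. For $\hat u_{xt}$ I use the first equation of \eqref{hatuv-equa}: $\hat u_t=-\hat v_x$, hence $\hat u_{xt}=-\hat v_{xx}$. Therefore $\|\hat u_{xt}\|_{L^\infty}\le C\|\hat v_{xx}\|_1\le C\|\hat v_x\|_2\le C\varepsilon_3$. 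This is the only point that needs care: \eqref{uvhat1} controls $\hat v_x$ in $H^2$, not $\hat u_t$ directly, so the conservation law is what converts one into the other.

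Finally, since $\phi^\tau=u^\tau-\hat u$, the triangle inequality gives $\|\phi^\tau_{xx}\|_{L^\infty}\le\|u^\tau_{xx}\|_{L^\infty}+\|\hat u_{xx}\|_{L^\infty}\le C\varepsilon_3$. Summing the four bounds proves \eqref{uxx}. There is no genuine obstacle here. The only thing to check is that the regularity $H^3$ of the solutions in Theorems \ref{thm1}--\ref{thm2} is enough for the second derivatives to lie in $H^1$, which it is.
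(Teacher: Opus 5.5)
Your proposal is correct and follows the paper's proof. The paper likewise combines the uniform bounds from Theorem~\ref{thm1} and Corollary~\ref{corhat} with $H^1\hookrightarrow L^\infty$ and the triangle inequality for $\phi^\tau_{xx}$. Your identity $\hat u_{xt}=-\hat v_{xx}$ simply spells out a step the paper leaves implicit when it asserts the $H^1$ bound on $\hat u_{xt}$.
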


	\begin{proof}
		Since $\varepsilon_3\le\min\{\varepsilon_1,\varepsilon_2\}$ and $\tau\ge\tau^*$, the estimates established in Corollary \ref{corhat} and Theorem \ref{thm1} imply
		\begin{align*}
			\|\hat{u}_{xt}\|_1+\|\hat{u}_{xx}\|_1 &\le C e^{-\gamma t}\bigl(\|\hat{u}_{0x}\|_2+\|\hat{v}_{0x}\|_2\bigr) \le C\varepsilon_3,\\
			\|u^{\tau}_{xx}\|_1 &\le C e^{-\sigma t}\bigl(\|u_{0x}^\tau\|_2+\|v_{0x}^\tau\|_2\bigr) \le C\varepsilon_3.
		\end{align*}
		By the Sobolev embedding $H^1\hookrightarrow L^\infty$, we obtain
		\[
		\|u^\tau_{xx}\|_{L^\infty}+\|\hat{u}_{xt}\|_{L^\infty}+\|\hat{u}_{xx}\|_{L^\infty}\le C\varepsilon_3.
		\]
		The estimate for $\phi^\tau_{xx}$ then follows from $\phi^\tau=u^\tau-\hat{u}$. This completes the proof.
	\end{proof}

	Now we derive basic estimates for $F$ and $G$ defined in \eqref{F} and \eqref{G}, which play a key role in the subsequent analysis.
	
	\begin{Lem}\label{lem21}
		Under the assumptions of Lemma \ref{lem20}, there exist constants $C>0$ and $\hat{\alpha}>0$, independent of $\tau$ and $t$, such that
		\begin{align*}
			|F_x| &\le C e^{-\hat{\alpha} t}, \qquad \text{for all } t>0,\\
			|G_x| &\le C\varepsilon_3 |\phi^\tau_x|, \qquad \text{for all } t>0,
		\end{align*}
		where $F$ and $G$ are given by \eqref{F} and \eqref{G}, respectively.
	\end{Lem}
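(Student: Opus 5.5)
Both bounds come from differentiating $F$ and $G$ in $x$ and substituting the decay estimates already established.

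\textbf{The bound for $F_x$.} Since $\phi^\tau+\hat u=u^\tau$, we have
\[
F_x=-f'(u^\tau)u^\tau_x+\hat v_x .
\]
Theorem \ref{thm4} gives $\|u^\tau\|\le 2\|u_0^\tau\|$ and $\|u^\tau_x\|_2\le C\varepsilon_3$. By Sobolev embedding, $\|u^\tau\|_{L^\infty}\le C$ with $C$ independent of $\tau$ and $t$, so $|f'(u^\tau)|\le C$. The derivative estimates \eqref{tauest} together with $H^1\hookrightarrow L^\infty$ give
\[
|u^\tau_x|\le C\|u^\tau_x\|_1\le Ce^{-\sigma t}.
\]
Corollary \ref{corhat} and the same embedding give $|\hat v_x|\le Ce^{-\gamma t}$. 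Setting $\hat\alpha:=\min\{\sigma,\gamma\}$ then yields $|F_x|\le Ce^{-\hat\alpha t}$.

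This step does not use (H$_3$). Note that $F$ itself need not decay, whereas $F_x$ does, and only $F_x$ enters \eqref{35}.

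\textbf{The bound for $G_x$.} Write $a:=\hat u_x$ and $b:=\phi^\tau_x$, so that $a+b=u^\tau_x$ and
\[
G=R(a+b)-R(a)-R'(a)b .
\]
Differentiating in $x$:
\[
G_x=\bigl[R'(u^\tau_x)-R'(\hat u_x)\bigr]u^\tau_{xx}+R'(\hat u_x)\hat u_{xx}-R'(\hat u_x)\hat u_{xx}-R''(\hat u_x)\hat u_{xx}\phi^\tau_x-R'(\hat u_x)\phi^\tau_{xx}.
\]
The two middle terms cancel. I then expand $u^\tau_{xx}=\hat u_{xx}+\phi^\tau_{xx}$, which splits $G_x$ into:
\begin{itemize}
\item[(i)] $\bigl[R'(a+b)-R'(a)-R''(a)b\bigr]\hat u_{xx}$;
\item[(ii)] $\bigl[R'(a+b)-R'(a)\bigr]\phi^\tau_{xx}$.
\end{itemize}
Both $a$ and $b$ are bounded in $L^\infty$ by $C\varepsilon_3$, using Theorem \ref{thm1}, Corollary \ref{corhat} and Sobolev embedding. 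So by the mean value theorem on this compact range, $|R'(a+b)-R'(a)|\le C|b|$ and $|R'(a+b)-R'(a)-R''(a)b|\le C|b|^2\le C|b|$. Lemma \ref{lem20} gives $\|\hat u_{xx}\|_{L^\infty}+\|\phi^\tau_{xx}\|_{L^\infty}\le C\varepsilon_3$. Combining,
\[
|G_x|\le C\varepsilon_3|\phi^\tau_x| .
\]

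\textbf{Main obstacle.} The delicate point is the cancellation in $G_x$. A naive product-rule expansion leaves a term like $R'(u^\tau_x)\phi^\tau_{xx}$, which is not pointwise controlled by $|\phi^\tau_x|$. It is essential to group it against $R'(\hat u_x)\phi^\tau_{xx}$, so that every surviving term carries a factor of either $b=\phi^\tau_x$ or $b^2$. All the smallness then comes from the second-derivative bounds of Lemma \ref{lem20}.

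The constants must also be uniform in $\tau\ge\tau^*$. For this I will rely only on estimates from Theorems \ref{thm1} and \ref{thm2}, whose constants are independent of $\tau$ and $t$.
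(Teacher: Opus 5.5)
Your proposal is correct and follows the paper's proof essentially line for line: you use the same pointwise bound $|F_x|\le C(|u^\tau_x|+|\hat v_x|)$ with $\hat\alpha=\min\{\sigma,\gamma\}$, and the same splitting $G_x=[R'(a+b)-R'(a)-R''(a)b]\,\hat u_{xx}+[R'(a+b)-R'(a)]\,\phi^\tau_{xx}$, controlled by Taylor's formula and Lemma~\ref{lem20}. One bookkeeping slip: in your intermediate display the first $R'(\hat u_x)\hat u_{xx}$ should read $R'(\hat u_x)u^\tau_{xx}$, since as written a stray $-R'(\hat u_x)\phi^\tau_{xx}$ survives the cancellation, although your final decomposition (i)+(ii) is the correct one.
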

	
	\begin{proof}
		From the definition of $F$ in \eqref{F}, a direct computation gives
		\[
		|F_x| = \bigl| -f'(\phi^\tau+\hat{u})(\phi^\tau_x+\hat{u}_x) + \hat{v}_x \bigr|
		\le C\bigl(|\phi^\tau_x|+|\hat{u}_x|+|\hat{v}_x|\bigr)
		\le C e^{-\hat{\alpha} t},
		\]
		where we have used Theorems \ref{thm1} and \ref{thm2}, the Sobolev embedding theorem, and set $\hat{\alpha} = \min\{\gamma,\sigma\}$.
		
		For $G$ defined in \eqref{G}, differentiating with respect to $x$ yields
		\begin{align*}
			G_x = \Bigl[ R'\bigl(\phi^\tau_x+\hat{u}_x\bigr) - R'(\hat{u}_x) - R''(\hat{u}_x)\phi^\tau_x \Bigr] \hat{u}_{xx} + \Bigl[ R'\bigl(\phi^\tau_x+\hat{u}_x\bigr) - R'(\hat{u}_x) \Bigr] \phi^\tau_{xx}.
		\end{align*}
		Taking absolute values and applying Taylor's formula, we obtain
		\[
		|G_x| \le C\Bigl( (\phi^\tau_x)^2 |\hat{u}_{xx}| + |\phi^\tau_x||\phi^\tau_{xx}| \Bigr).
		\]
		By Lemma \ref{lem20}, $|\phi^\tau_{xx}|$ and $|\hat{u}_{xx}|$ are bounded by $C\varepsilon_3$, and $|\phi^\tau_x|$ is bounded pointwise via the Sobolev embedding. Consequently,
		\[
		|G_x| \le C\varepsilon_3 |\phi^\tau_x|.
		\]
		This completes the proof.
	\end{proof}
	
	With the above preparations in hand, we now proceed to estimate $\phi^\tau$.
	
	\begin{Lem}\label{1p}
		Under the assumptions of Lemma \ref{lem20}, there exist positive constants $C$ and $\mu$, independent of $\tau$ and $t$, such that the estimates
		\begin{align}\label{phixt}
			\|\phi^\tau_x\|^2 + \|\phi^\tau_t\|^2 &\le \frac{C e^{-\mu t}}{\tau^2} + Ce^{-\mu t}\bigl( \|\phi^\tau_{0x}\|^2 + \|\phi^\tau_1\|^2 \bigr), \\[2mm]
			\|\phi^\tau\|^2 &\le \frac{C}{\tau^2} + C\bigl( \|\phi^\tau_0\|_1^2 + \|\phi^\tau_1\|^2 \bigr) \label{phi},
		\end{align}
		hold for all $t>0$.
	\end{Lem}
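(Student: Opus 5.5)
We follow the energy method of Lemma \ref{le3.1}, now applied to the linearized-plus-remainder equation \eqref{35} with the boundary conditions \eqref{36}. Multiply \eqref{35} by $\phi^\tau_t$ and integrate over $(0,1)$. Using $\phi^\tau_x(0,t)=0$ and $\phi^\tau_x(1,t)=-k\phi^\tau_t(1,t)$, this gives
\[
\frac{d}{dt}\Big(\tfrac12\|\phi^\tau_t\|^2+\tfrac12\int_0^1 R'(\hat u_x)(\phi^\tau_x)^2dx\Big)+\frac1\tau\|\phi^\tau_t\|^2+kR'(\hat u_x(1,t))(\phi^\tau_t(1,t))^2 = \tfrac12\int_0^1 R''(\hat u_x)\hat u_{xt}(\phi^\tau_x)^2dx+\int_0^1\Big(\tfrac1\tau F_x+G_x\Big)\phi^\tau_t\,dx.
\]
There is no boundary contribution from $F_x,G_x$ because we do not integrate them by parts. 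Next, multiply \eqref{35} by $x\phi^\tau_x$ and proceed exactly as in \eqref{xxx}--\eqref{1q2}, with $g$ replaced by the coefficient $R'(\hat u_x)$. The commutator terms now carry $\hat u_{xx}$ and $\hat u_{xt}$, which Lemma \ref{lem20} bounds by $C\varepsilon_3$.

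Add $\lambda$ times the second identity to the first, with $\lambda$ from \eqref{lambda}. Since $R'\ge\delta$ by \eqref{rs}, the boundary term $B(t)$ is nonnegative, just as in \eqref{B1}. Define
\[
\mathcal E(t)=\tfrac12\|\phi^\tau_t\|^2+\tfrac12\int_0^1 R'(\hat u_x)(\phi^\tau_x)^2dx+\lambda\int_0^1 x\phi^\tau_x\phi^\tau_tdx,
\]
which is equivalent to $\|\phi^\tau_x\|^2+\|\phi^\tau_t\|^2$. We then estimate the right-hand side term by term.
\begin{itemize}
\item The terms involving $\hat u_{xt}$, $\hat u_{xx}$ are bounded by $C\varepsilon_3(\|\phi^\tau_x\|^2+\|\phi^\tau_t\|^2)$, using Lemma \ref{lem20}.
\item By Lemma \ref{lem21}, $G_x$ contributes at most $C\varepsilon_3(\|\phi^\tau_x\|^2+\|\phi^\tau_t\|^2)$.
\item For $\varepsilon_3$ small, these are absorbed into the dissipation $\frac{\lambda}{8}(\|\phi^\tau_t\|^2+\int R'(\phi^\tau_x)^2)$.
\item The $1/\tau$ cross terms are handled as in \eqref{I21}--\eqref{ppp}, taking $\tau\ge\tau^*$.
\item The forcing is treated by Young's inequality, using $|F_x|\le Ce^{-\hat\alpha t}$:
\[
\frac1\tau\int_0^1 |F_x|\big(|\phi^\tau_t|+\lambda|\phi^\tau_x|\big)dx\le \eta\,\mathcal E(t)+\frac{C}{\eta\tau^2}e^{-2\hat\alpha t}.
\]
\end{itemize}
Altogether this yields $\mathcal E'+2\mu_0\mathcal E\le C\tau^{-2}e^{-2\hat\alpha t}$. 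Gronwall's inequality then gives $\mathcal E(t)\le e^{-2\mu_0t}\mathcal E(0)+C\tau^{-2}e^{-\mu t}$ for any $\mu<\min\{2\mu_0,2\hat\alpha\}$, handling the resonant case with a slightly smaller rate as in \eqref{integ}. This is \eqref{phixt}.

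For \eqref{phi} we cannot use Poincaré directly, because the mean of $\phi^\tau$ need not vanish. Instead we write $\phi^\tau(t)=\phi^\tau_0+\int_0^t\phi^\tau_s\,ds$. Then
\[
\|\phi^\tau(t)\|\le\|\phi^\tau_0\|+\int_0^t\|\phi^\tau_s\|ds\le \|\phi^\tau_0\|+C\big(\tau^{-1}+\|\phi^\tau_{0x}\|+\|\phi^\tau_1\|\big)\int_0^\infty e^{-\mu s/2}ds .
\]
Squaring this gives \eqref{phi}. This step is exactly where the exponential-in-time factor in \eqref{phixt} is indispensable, and that factor comes from the decay of $F_x$. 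That decay in turn rests on \eqref{h3} and on the shifted profiles $(\hat u,\hat v)$.

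The main obstacle is the bound $|G_x|\le C\varepsilon_3|\phi^\tau_x|$ of Lemma \ref{lem21}, specifically how it enters the $x\phi^\tau_x$ multiplier term. There $\int G_x\, x\phi^\tau_x$ is harmless, but $\int G_x\phi^\tau_t$ must also be absorbed uniformly in $\tau$, which requires the smallness constants to be independent of $\tau$. The second delicate point is making sure the $1/\tau$ damping is never needed for coercivity, since it degenerates as $\tau\to\infty$. All dissipation must come from the boundary damping through the $\lambda$-multiplier, exactly as in Lemma \ref{le3.1}.
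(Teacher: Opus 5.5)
Your proposal is correct and follows the paper's proof essentially step by step. It uses the same $\phi^\tau_t$ and $\lambda x\phi^\tau_x$ multipliers, absorbs the $\hat u_{xt},\hat u_{xx}$ and $G_x$ terms by $\varepsilon_3$-smallness and the $F_x$ forcing by Young's inequality, applies Gr\"onwall, and integrates in time for $\|\phi^\tau\|$ (bounding $\|\phi^\tau\|$ before squaring is in fact cleaner than the paper's squared version). One correction to your commentary: the decay of $F_x=-f'(u^\tau)u^\tau_x+\hat v_x$ does not rest on \eqref{h3} or on the shift, since it follows from the decay of $u^\tau_x$ and $\hat v_x=\bar v_x$ alone; \eqref{h3} and the shift are needed only afterwards, for the undifferentiated source $\bigl(f(\hat u)-\hat v\bigr)/\tau$ in the $\psi^\tau$ estimate of Theorem \ref{thm3}.
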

	
	\begin{proof}
		Multiplying \eqref{35} by $\phi_t^\tau$ ans integrating the resulting equation over $(0,1)$ with respect to $x$,  we obtain
		\begin{align}\label{phit}
			\frac{1}{2}\frac{d}{dt}\int_0^1(\phi_t^\tau)^2dx +\frac{1}{\tau}\int_0^1(\phi_t^\tau)^2dx -\int_0^1\bigl(R'(\hat{u}_x)\phi_x^\tau\bigr)_x\phi_t^\tau dx
			= \frac{1}{\tau}\int_0^1 F_x\phi_t^\tau dx + \int_0^1 G_x\phi_t^\tau dx .
		\end{align}
		Integrating by parts in the third term on the left-hand side of \eqref{phit} and employing the boundary condition \eqref{36} gives
		\begin{align*}
			\begin{aligned}
				-\int_0^1\bigl(R'(\hat{u}_x)\phi_x^\tau\bigr)_x\phi_t^\tau dx
				=& kR'\bigl(\hat{u}_x(1,t)\bigr)\bigl(\phi_t^\tau(1,t)\bigr)^2 + \int_0^1 R'(\hat{u}_x)\phi_x^\tau\phi_{xt}^\tau dx \\[1mm]
				=& kR'\bigl(\hat{u}_x(1,t)\bigr)\bigl(\phi_t^\tau(1,t)\bigr)^2 + \frac{1}{2}\frac{d}{dt}\int_0^1 R'(\hat{u}_x)(\phi_x^\tau)^2dx \\
				& -\frac{1}{2}\int_0^1 R''(\hat{u}_x)\hat{u}_{xt}(\phi_x^\tau)^2dx .
			\end{aligned}
		\end{align*}
		Consequently, it holds
		\begin{align}\label{p2}
			&\frac{d}{dt}\Bigl( \frac{1}{2}\int_0^1(\phi_t^\tau)^2dx + \frac{1}{2}\int_0^1 R'(\hat{u}_x)(\phi_x^\tau)^2dx \Bigr) + \frac{1}{\tau}\int_0^1(\phi_t^\tau)^2dx \nonumber\\
			&\qquad + kR'\bigl(\hat{u}_x(1,t)\bigr)\bigl(\phi_t^\tau(1,t)\bigr)^2 \nonumber\\
			&= \frac{1}{2}\int_0^1 R''(\hat{u}_x)\hat{u}_{xt}(\phi_x^\tau)^2dx + \frac{1}{\tau}\int_0^1 F_x\phi_t^\tau dx + \int_0^1 G_x\phi_t^\tau dx .
		\end{align}
		
		Now let us consider the modified energy $\int_0^1 x\phi_x^\tau\phi_t^\tau dx$. Differentiating with respect to $t$ and integrating by parts yields
		\begin{align*}
			\begin{aligned}
				\frac{d}{dt}\int_0^1 x\phi_x^\tau\phi_t^\tau dx
				=&\int_0^1 x\phi_{xt}^\tau\phi_t^\tau dx + \int_0^1 x\phi_x^\tau\phi_{tt}^\tau dx \\
				=& \frac{1}{2}\int_0^1 x\bigl((\phi_t^\tau)^2\bigr)_xdx + \int_0^1 x\phi_x^\tau\Bigl( -\frac{1}{\tau}\phi_t^\tau + \bigl(R'(\hat{u}_x)\phi_x^\tau\bigr)_x + \frac{1}{\tau}F_x + G_x \Bigr)dx \\
				=& \frac{1}{2}\bigl(\phi_t^\tau(1,t)\bigr)^2 - \frac{1}{2}\int_0^1(\phi_t^\tau)^2dx - \frac{1}{\tau}\int_0^1 x\phi_x^\tau\phi_t^\tau dx \\
				&+ \int_0^1 x\phi_x^\tau\bigl(R'(\hat{u}_x)\phi_x^\tau\bigr)_xdx + \frac{1}{\tau}\int_0^1 xF_x\phi_x^\tau dx + \int_0^1 xG_x\phi_x^\tau dx .
			\end{aligned}
		\end{align*}
		For the term involving $(R'(\hat{u}_x)\phi_x^\tau)_x$, integration by parts together with the boundary condition \eqref{36} gives
		\begin{align*}
			\begin{aligned}
				\int_0^1 x\phi_x^\tau\bigl(R'(\hat{u}_x)\phi_x^\tau\bigr)_xdx
				= &k^2 R'\bigl(\hat{u}_x(1,t)\bigr)\bigl(\phi_t^\tau(1,t)\bigr)^2 - \int_0^1 R'(\hat{u}_x)(\phi_x^\tau)^2dx \\
				& -\frac{1}{2}\int_0^1 xR'(\hat{u}_x)\bigl((\phi_x^\tau)^2\bigr)_xdx \\
				=& \frac{k^2}{2} R'\bigl(\hat{u}_x(1,t)\bigr)\bigl(\phi_t^\tau(1,t)\bigr)^2 - \frac{1}{2}\int_0^1 R'(\hat{u}_x)(\phi_x^\tau)^2dx \\
				& + \frac{1}{2}\int_0^1 xR''(\hat{u}_x)\hat{u}_{xx}(\phi_x^\tau)^2dx .
			\end{aligned}
		\end{align*}
		Inserting this identity, we obtain
		\begin{align}\label{p3}
			&\frac{d}{dt}\int_0^1 x\phi_x^\tau\phi_t^\tau dx + \frac{1}{2}\int_0^1(\phi_t^\tau)^2dx + \frac{1}{2}\int_0^1 R'(\hat{u}_x)(\phi_x^\tau)^2dx + \frac{1}{\tau}\int_0^1 x\phi_x^\tau\phi_t^\tau dx \nonumber\\
			&= \Bigl[ \frac{1}{2} + \frac{k^2}{2} R'\bigl(\hat{u}_x(1,t)\bigr) \Bigr] \bigl(\phi_t^\tau(1,t)\bigr)^2 + \frac{1}{2}\int_0^1 xR''(\hat{u}_x)\hat{u}_{xx}(\phi_x^\tau)^2dx \nonumber\\
			&\quad + \frac{1}{\tau}\int_0^1 xF_x\phi_x^\tau dx + \int_0^1 xG_x\phi_x^\tau dx .
		\end{align}
		
		Now, let $\lambda$ be as defined in \eqref{lambda}. Multiplying \eqref{p3} by $\lambda$ and adding the result to \eqref{p2}, we arrive at
		\begin{align}\label{p4}
			&\frac{d}{dt}\Bigl( \frac{1}{2}\int_0^1(\phi_t^\tau)^2dx + \frac{1}{2}\int_0^1 R'(\hat{u}_x)(\phi_x^\tau)^2dx + \lambda\int_0^1 x\phi_x^\tau\phi_t^\tau dx \Bigr) \nonumber\\
			&\quad + \frac{\lambda}{\tau}\int_0^1 x\phi_x^\tau\phi_t^\tau dx + \frac{\lambda}{2}\int_0^1(\phi_t^\tau)^2dx + \frac{\lambda}{2}\int_0^1 R'(\hat{u}_x)(\phi_x^\tau)^2dx + B(t) \nonumber\\
			&\le \underbrace{\frac{1}{2}\int_0^1 R''(\hat{u}_x)\hat{u}_{xt}(\phi_x^\tau)^2dx + \frac{\lambda}{2}\int_0^1 xR''(\hat{u}_x)\hat{u}_{xx}(\phi_x^\tau)^2dx}_{L_1} \nonumber\\
			&\quad + \underbrace{\frac{1}{\tau}\int_0^1 F_x\phi_t^\tau dx + \frac{\lambda}{\tau}\int_0^1 xF_x\phi_x^\tau dx}_{L_2} + \underbrace{\int_0^1 G_x\phi_t^\tau dx + \lambda\int_0^1 xG_x\phi_x^\tau dx}_{L_3},
		\end{align}
		where the non-negativity of 
		\[
		B(t):=\Bigl[ kR'\bigl(\hat{u}_x(1,t)\bigr) - \Bigl( \frac{\lambda}{2} + \frac{\lambda k^2}{2} R'\bigl(\hat{u}_x(1,t)\bigr) \Bigr) \Bigr] \bigl(\phi_t^\tau(1,t)\bigr)^2 \ge 0,
		\]
		follows from  \eqref{lambda} and \eqref{rs}.
		
		We now estimate $L_1$, $L_2$ and $L_3$. For $L_1$, using Lemma \ref{lem20}, \eqref{lambda} and \eqref{rs}, we obtain
		\begin{align}\label{L1}
			L_1 \le \frac{C\varepsilon_3(1+\lambda)}{\delta}\int_0^1 R'(\hat{u}_x)(\phi_x^\tau)^2dx \le \frac{\lambda}{8}\int_0^1 R'(\hat{u}_x)(\phi_x^\tau)^2dx,
		\end{align}
		provided $\varepsilon_3 \le \frac{C\lambda\delta}{1+\lambda}$.
		
		For $L_2$, we apply Lemma \ref{lem21} and Young's inequality:
		\begin{align}\label{L2}
			L_2 &\le \frac{Ce^{-\hat{\alpha}t}}{\tau}\int_0^1 |\phi_t^\tau|dx + \frac{C\lambda e^{-\hat{\alpha}t}}{\tau\sqrt{\delta}}\int_0^1 \sqrt{R'(\hat{u}_x)}|\phi_x^\tau|dx \nonumber\\
			&\le \frac{\lambda}{4}\int_0^1(\phi_t^\tau)^2dx + \frac{Ce^{-2\hat{\alpha}t}}{\tau^2\lambda} + \frac{\lambda}{8}\int_0^1 R'(\hat{u}_x)(\phi_x^\tau)^2dx + \frac{C\lambda e^{-2\hat{\alpha}t}}{\tau^2\delta} \nonumber\\
			&\le \frac{\lambda}{4}\int_0^1(\phi_t^\tau)^2dx + \frac{\lambda}{8}\int_0^1 R'(\hat{u}_x)(\phi_x^\tau)^2dx + \frac{Ce^{-2\hat{\alpha}t}}{\tau^2}.
		\end{align}
		
		For $L_3$, using Lemma \ref{lem21} together with \eqref{rs} and Young's inequality, we deduce
		\begin{align}\label{L3}
			L_3 &\le C\varepsilon_3\int_0^1 |\phi_x^\tau||\phi_t^\tau|dx + C\lambda\varepsilon_3\int_0^1 |\phi_x^\tau|^2dx \nonumber\\
			&\le \frac{C\varepsilon_3}{\sqrt{\delta}}\int_0^1 \sqrt{R'(\hat{u}_x)}|\phi_x^\tau||\phi_t^\tau|dx + \frac{C\lambda\varepsilon_3}{\delta}\int_0^1 R'(\hat{u}_x)(\phi_x^\tau)^2dx \nonumber\\
			&\le \frac{\lambda}{16}\int_0^1 R'(\hat{u}_x)(\phi_x^\tau)^2dx + \frac{C\varepsilon_3^2}{\lambda\delta}\int_0^1(\phi_t^\tau)^2dx + \frac{\lambda}{16}\int_0^1 R'(\hat{u}_x)(\phi_x^\tau)^2dx \nonumber\\
			&\le \frac{\lambda}{8}\int_0^1 R'(\hat{u}_x)(\phi_x^\tau)^2dx + \frac{\lambda}{8}\int_0^1(\phi_t^\tau)^2dx,
		\end{align}
		provided that $\varepsilon_3 $ is sufficiently small. 
		
		Now define
		\[
		\mathcal{F}(t) := \frac{1}{2}\int_0^1(\phi_t^\tau)^2dx + \frac{1}{2}\int_0^1 R'(\hat{u}_x)(\phi_x^\tau)^2dx + \lambda\int_0^1 x\phi_x^\tau\phi_t^\tau dx .
		\]
		Inserting \eqref{L1}--\eqref{L3} into \eqref{p4} yields
		\begin{align*}
			\frac{d}{dt}\mathcal{F}(t) + \frac{\lambda}{8}\int_0^1(\phi_t^\tau)^2dx + \frac{\lambda}{8}\int_0^1 R'(\hat{u}_x)(\phi_x^\tau)^2dx + \frac{\lambda}{\tau}\int_0^1 x\phi_x^\tau\phi_t^\tau dx \le \frac{Ce^{-2\hat{\alpha}t}}{\tau^2}.
		\end{align*}
		Choosing $\tau \ge \frac{\lambda}{\beta}$ with $\beta$ defined in \eqref{be} and applying Young's inequality, we deduce
		\begin{align*}
			&\frac{d}{dt}\mathcal{F}(t) + \frac{\lambda}{8}\int_0^1(\phi_t^\tau)^2dx + \frac{\lambda}{8}\int_0^1 R'(\hat{u}_x)(\phi_x^\tau)^2dx + \beta\int_0^1 x\phi_x^\tau\phi_t^\tau dx \\
			&\le \Bigl(\beta - \frac{\lambda}{\tau}\Bigr)\int_0^1 x\phi_x^\tau\phi_t^\tau dx + \frac{Ce^{-2\hat{\alpha}t}}{\tau^2} \\
			&\le \beta\int_0^1 |\phi_x^\tau||\phi_t^\tau|dx + \frac{Ce^{-2\hat{\alpha}t}}{\tau^2} \\
			&\le \frac{\beta}{2\sqrt{\delta}}\int_0^1(\phi_t^\tau)^2dx + \frac{\beta}{2\sqrt{\delta}}\int_0^1 R'(\hat{u}_x)(\phi_x^\tau)^2dx + \frac{Ce^{-2\hat{\alpha}t}}{\tau^2}.
		\end{align*}
		Rearranging and using the definition of $\mathcal{F}(t)$, we obtain
		\begin{align}\label{diffF}
			\frac{d}{dt}\mathcal{F}(t) + \alpha_1\mathcal{F}(t) \le \frac{Ce^{-2\hat{\alpha}t}}{\tau^2},
		\end{align}
		where $\alpha_1 = \frac{\lambda}{4(1+\lambda/\sqrt{\delta})}$ is exactly the constant appearing in Lemma \ref{le3.1}.
		
		Applying Gr\"onwall's inequality to \eqref{diffF} gives
		\[
		\mathcal{F}(t) \le \mathcal{F}(0)e^{-\alpha_1 t} + \frac{C}{\tau^2} e^{-\alpha_1 t}\int_0^t e^{(\alpha_1-2\hat{\alpha})s}ds .
		\]
		If $\alpha_1 \neq 2\hat{\alpha}$, the integral can be bounded by a constant times $e^{-\mu t}$ with $\mu = \min\{\alpha_1,2\hat{\alpha}\}$; the case $\alpha_1 = 2\hat{\alpha}$ only produces an additional factor $t$, which can be absorbed into the exponential at the cost of a slightly smaller exponent. In any case, there exists $\mu>0$ (depending only on $\alpha_1,\hat{\alpha}$) such that
		\begin{align}\label{Fe}
			\mathcal{F}(t) \le \mathcal{F}(0)e^{-\mu t} + \frac{Ce^{-\mu t}}{\tau^2}.
		\end{align}
		By the same arguments as in Section \ref{sec3}, using \eqref{rs} and the initial conditions \eqref{36}, we see that the estimate \eqref{Fe} directly implies \eqref{phixt}.
		
		To estimate $\phi^\tau$ itself, we write $\phi^\tau(x,t) - \phi_0^\tau(x) = \int_0^t \phi_s^\tau(x,s)ds$. Then, by \eqref{phixt},
		\begin{align*}
			\begin{aligned}
				\|\phi^\tau\|^2 &\le C\|\phi_0^\tau\|^2 + C\int_0^t \|\phi_s^\tau\|^2ds \\
				&\le C\|\phi_0^\tau\|^2 + C\int_0^t \Bigl( \frac{e^{-\mu s}}{\tau^2} + e^{-\mu s}\bigl(\|\phi_{0x}^\tau\|^2 + \|\phi_1^\tau\|^2\bigr) \Bigr)ds \\
				&\le C\|\phi_0^\tau\|^2 + \frac{C}{\mu\tau^2}(1-e^{-\mu t}) + \frac{C}{\mu}(1-e^{-\mu t})\bigl(\|\phi_{0x}^\tau\|^2 + \|\phi_1^\tau\|^2\bigr) \\
				&\le \frac{C}{\tau^2} + C\bigl(\|\phi_0^\tau\|_1^2 + \|\phi_1^\tau\|^2\bigr),
			\end{aligned}
		\end{align*}
		which is precisely \eqref{phi}. This completes the proof of Lemma \ref{1p}.
	\end{proof}

	\begin{proof}[Proof of Theorem \ref{thm3}]
		By  Lemma \ref{1p} and the Sobolev embedding theorem, we obtain
		\begin{align}\label{ph}
			\|\phi^\tau\|_{L^\infty} \le C\|\phi^\tau\|_{1} \le C\bigl(\|\phi_0^\tau\|_1 + \|\phi_1^\tau\|\bigr) + \frac{C}{\tau}.
		\end{align}
		Next, from \eqref{33} we see that $\psi^\tau$ satisfies the ODE
		\[
		\psi _{t}^{\tau}+\frac{1}{\tau}\psi ^{\tau}=-\bigl[ g(\phi _{x}^{\tau}+\hat{u}_x)(\phi _{x}^{\tau}+\hat{u}_x)-g(\hat{u}_x)\hat{u}_x \bigr] +\frac{f(\phi ^{\tau}+\hat{u})-\hat{v}}{\tau}.
		\]
		Setting $R(s)=s g(s)$, we can  rewrite this equation as
		\begin{align*}
			\psi _{t}^{\tau}+\frac{1}{\tau}\psi ^{\tau}&=-\bigl( R(\phi _{x}^{\tau}+\hat{u}_x)-R(\hat{u}_x) \bigr) +\frac{f(\phi ^{\tau}+\hat{u})-\hat{v}}{\tau}\\
			&=-\bigl( R(\phi _{x}^{\tau}+\hat{u}_x)-R(\hat{u}_x) \bigr) +\frac{f(\phi ^{\tau}+\hat{u})-f\left( \hat{u}\right)}{\tau}+\frac{f\left( \hat{u} \right)}{\tau}-\frac{\hat{v}}{\tau},
		\end{align*}
		with the initial condition $\psi^\tau(x,0)=\psi_0^\tau(x)$. Solving the linear ODE explicitly gives
		\begin{align}\label{psi}
			\psi ^{\tau}&=e^{-\frac{t}{\tau}}\psi _{0}^{\tau}\nonumber\\
			&\quad+\int_0^t e^{-\frac{t-s}{\tau}}\Bigl[ -\bigl( R(\phi _{x}^{\tau}+\hat{u}_x)-R(\hat{u}_x) \bigr) +\frac{f(\phi ^{\tau}+\hat{u})-f\left( \hat{u}\right)}{\tau}+\frac{f\left( \hat{u} \right)}{\tau}-\frac{\hat{v}}{\tau} \Bigr] ds.
		\end{align}
		
		We now estimate $\psi^\tau$ and its derivatives. From the first equation of \eqref{33} and  \eqref{phixt}, we have 
		\begin{align}\label{r.11}
			\|\psi_x^\tau\| \le \frac{C e^{-\mu t/2}}{\tau} + C e^{-\mu t/2}\bigl(\|\phi_{0x}^\tau\| + \|\phi_1^\tau\|\bigr).
		\end{align}

		Moreover, using Corollary \ref{corhat}, Lemma \ref{1p} and \eqref{h3},  we obtain 
		\begin{align*}
			\|R(\phi_x^\tau+\hat{u}_x)-R(\hat{u}_x)\| &\le C\|\phi_x^\tau\| \le \frac{C e^{-\mu t/2}}{\tau} + C e^{-\mu t/2}\left( \|\phi_{0x}^\tau\| + \|\phi_1^\tau\|\right),\\
			\Bigl\| \frac{f(\phi ^{\tau}+ \hat{u})-f\left(  \hat{u}\right)}{\tau} \Bigr\| &\le \frac{C}{\tau}\|\phi ^{\tau}\| \le \frac{C}{\tau ^2}+\frac{C}{\tau}\left( \|\phi _{0}^{\tau}\| _1+\| \phi _{1}^{\tau}\| \right) ,\\
			\Bigl\| \frac{f\left(  \hat{u} \right)}{\tau} \Bigr\| =\Bigl\| \frac{f\left(  \hat{u}\right) -f\left( 0 \right)}{\tau} \Bigr\| &\le \frac{C}{\tau}\left\|  \hat{u} \right\| \le \frac{Ce^{-\mu t/2}}{\tau},\\
			\left\| \frac{ \hat{v}}{\tau} \right\| &\le \frac{1}{\tau}\left\|  \hat{v} \right\| \le \frac{Ce^{-\mu t/2}}{\tau}.
		\end{align*}
		Inserting these estimates into \eqref{psi} yields
		\begin{align*}
			\| \psi ^{\tau}\| \le &\| \psi _{0}^{\tau}\| +C\int_0^t{e^{-\frac{t-s}{\tau}}\Bigl( \frac{e^{-\mu s/2}}{\tau}+e^{-\mu s/2}(\| \phi _{0x}^{\tau}\| +\| \phi _{1}^{\tau}\| ) \Bigr) ds}\\
			&+C\int_0^t{e^{-\frac{t-s}{\tau}}\left( \frac{1}{\tau ^2}+\frac{1}{\tau}\left( \| \phi _{0}^{\tau}\| _1+\| \phi _{1}^{\tau}\| \right) \right) ds}\\
			&+\frac{C}{\tau}\int_0^t{e^{-\frac{t-s}{\tau}}e^{-\mu s/2}ds}\\
			\le &\| \psi _{0}^{\tau}\| +C\left( \frac{1}{\tau}+(\| \phi _{0x}^{\tau}\| +\| \phi _{1}^{\tau}\|) \right) \int_0^t{e^{-\frac{t-s}{\tau}}e^{-\mu s/2}ds}\\
			&+C\left( \frac{1}{\tau ^2}+\frac{1}{\tau}(\| \phi _{0}^{\tau}\| _1+\| \phi _{1}^{\tau})\| ) \right) \tau \left( 1-e^{-\frac{t}{\tau}} \right)\\
			&+\frac{C}{\tau}\int_0^t{e^{-\frac{t-s}{\tau}}e^{-\mu s/2}ds}\\
			\le &C(\bigl\| \psi _{0}^{\tau}\| +\| \phi _{0}^{\tau}\| _1+\| \phi _{1}^{\tau}\| \bigr) +\frac{C}{\tau}.
		\end{align*}
		
		Combining this with the estimate \eqref{r.11} for $\psi_x^\tau$  and applying the Sobolev embedding theorem gives
		\begin{align}\label{ps}
			\|\psi^\tau\|_{L^\infty} \le C\|\psi^\tau\|_{1} \le C\bigl(\|\psi_0^\tau\|_1 + \|\phi_0^\tau\|_1 + \|\phi_{1}^\tau\|\bigr) + \frac{C}{\tau}.
		\end{align}
		
		From \eqref{ph} and \eqref{ps}, we immediately obtain \eqref{goal1}. If, in addition, the initial perturbations satisfy
		\[
		\|\phi_0^\tau\|_1 + \|\psi_0^\tau\|_1 \le \frac{C}{\tau},
		\]
		then the solutions of \eqref{a}--\eqref{b} converge to those  of \eqref{hatuv-equa} as $\tau\to+\infty$ in the sense stated in \eqref{goal2} for any $t>0$.
	\end{proof}
	\section{Numerical Simulations}\label{sec5}
In this section, we present numerical simulations to illustrate and validate the main theoretical findings established in
	 Theorem~\ref{thm3}. To this end, we specify the flux function $f(s)$ and the gradient sensitivity function $g(s)$ in system \eqref{a} with \eqref{b} as follows:
	\[f(s)=4s(1-s),~g(s)=s^2+1.\]
	The initial density and flow profiles $(u^\tau_0(x), v^\tau_0(x))$ are prescribed by
	\begin{align*}
		\begin{cases}
			u_{0}^{\tau}(x)=0.62+0.18B(x)\frac{\cos \theta +0.08\cos\mathrm{(}2\theta )}{1.08}+\frac{1}{\tau},\\[0.5em]
			v_{0}^{\tau}(x)=\frac{f\left( u_0(x) \right) \exp \left[ 0.65B(x)\sin \left( \theta -\frac{\pi}{2} \right) \right]}{1-f\left( u_0(x) \right) +f\left( u_0(x) \right) \exp \left[ 0.65B(x)\sin \left( \theta -\frac{\pi}{2} \right) \right]},\\
		\end{cases}
	\end{align*}
	where $\theta=4\pi(x-0.5),~B(x)=b\!\left(\frac{x-0.5}{0.48}\right)$, 
	with
	\[b(z)=\begin{cases}
		\displaystyle
		\exp\!\left(1-\frac{1}{1-z^2}\right),
		& |z|<1,\\[2mm]
		0,
		& |z|\geq 1.
	\end{cases}\]
	These benchmark setups are retained consistently throughout the subsequent numerical experiments.

The initial data prescribed above incorporate a localized spatial disturbance. In the regime of a large relaxation time $\tau$, the relaxation source term $\frac{f(u^\tau) - v^\tau}{\tau}$ becomes exceedingly weak. Consequently, $v^\tau$ adapts sluggishly toward its local equilibrium profile $f(u^\tau)$, allowing the localized disturbance to persist and evolve into alternating high-- and low--density band--a signature of the classic stop-and-go traffic instability. Under the action of the proposed intelligent-control boundary, this instability is effectively mitigated. Figure~\ref{fig:stop-go-schematic} provides a schematic overview of the underlying physical mechanism:

\begin{itemize} 
\item Free flow stage ($t = t_0$): Vehicles enter from the upstream on-ramp and move smoothly downstream, forming an initially uniform stream.
\item Delayed response regime ($t = t_1$): Due to sluggish driver adaptation under a large relaxation time $\tau$, a localized perturbation grows into a stop-and-go wave structure.
\item Suppression via intelligent control: As density accumulates near the exit boundary, the adaptive damping condition senses the real-time temporal variation and dynamically adjusts the outflow, gradually absorbing the kinetic energy of the density waves and restoring traffic stability.
\end{itemize}
	
	\begin{figure}[H]
		\centering
		\includegraphics[width=0.85\linewidth]{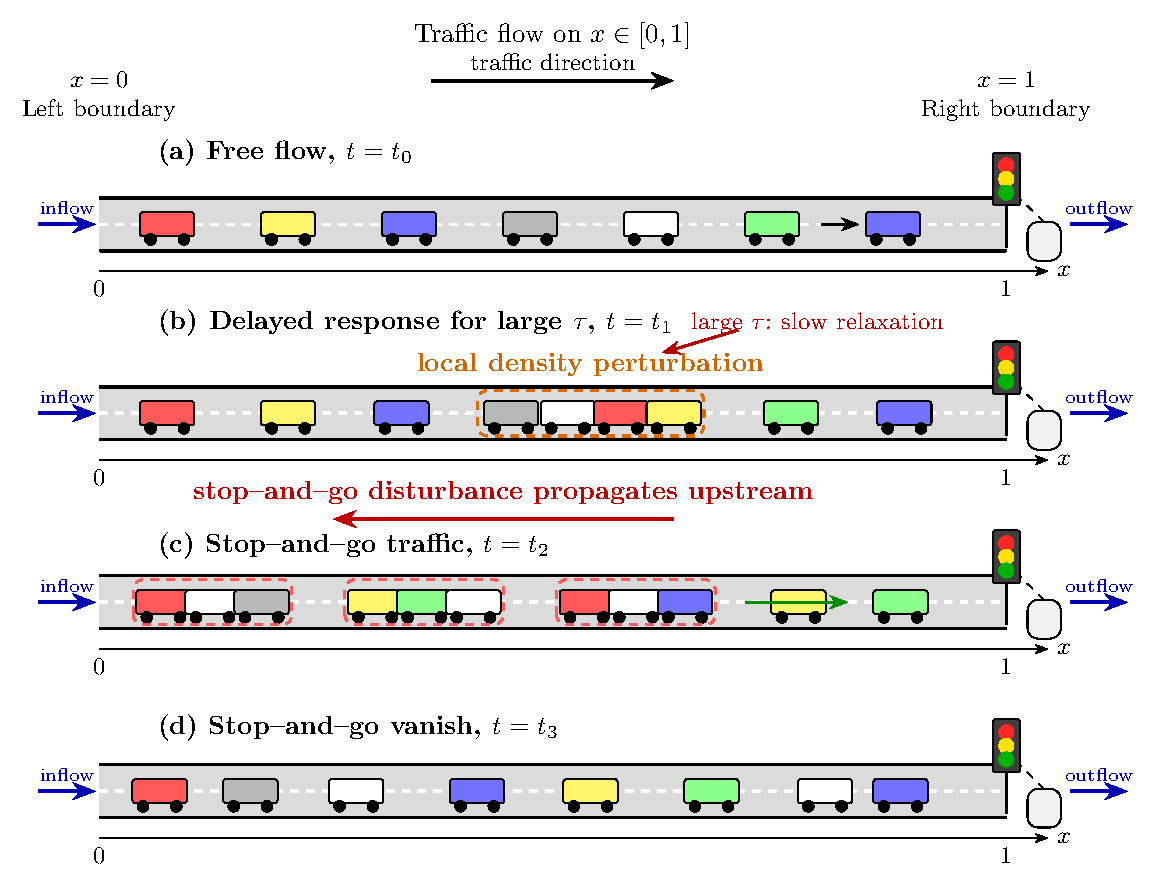}
		\caption{Schematic illustration of the formation of stop-and-go traffic induced by a large relaxation time.}
		\label{fig:stop-go-schematic}
	\end{figure}
	
	First, Theorem \ref{thm3} tells us that the  solution to the problem \eqref{a} with \eqref{b} uniformly con-
	verges toward the one to the problem \eqref{hatuv-equa} as the relaxation time $\tau$ tends to $\infty$. This result holds for the  damping boundary data 
	\[u_x^\tau(0,t)=0,~~u_x^\tau(1,t) = -ku_t^\tau(1,t),\]
	where we choose $k=0.12$.
	
	Next, to numerically verify the theoretical results established in Theorem \ref{thm3},  in this section, we use a fifth-order
	linear upwind-biased finite-difference method with global
	Lax--Friedrichs stabilization and  Kreiss--Oliger-type dissipation. We introduce the uniform nodal grid $x_i=i\Delta x,~ i=0,1,\ldots,N,~\Delta x=\frac{1}{N}$, with $N=401$. Here, we compute the spatial derivatives $(u_x^\tau(t,x_i),v_x^\tau(t,x_i))$ by the sixth-order centered difference, and all nonlinear operations, namely $\frac{f (u^\tau)-v^\tau}{\tau}$ is  evaluated pointwise on
	the spatial grid.  Let $W_h^\tau=\left\{
	\left(u_i^\tau,v_i^\tau\right)^{T}
	\right\}_{i=0}^{N}$ , then the semi-discrete operator of the system \eqref{a} is defined by
	\begin{align*}
		\left[\mathcal L_h^\tau(W_h^\tau)\right]_i
		=
		\begin{pmatrix}
			-\dfrac{1}{2}(D_h^-+D_h^+)v_i^\tau
			-\dfrac{\alpha}{2}(D_h^--D_h^+)u_i^\tau
			+Q(u_i^\tau)
			\\[3mm]
			-g(p_i^\tau)p_i^\tau
			-\dfrac{\alpha}{2}(D_h^--D_h^+)v_i^\tau
			+\dfrac{f(u_i^\tau)-v_i^\tau}{\tau}
			+Q(v_i^\tau)
		\end{pmatrix},
	\end{align*}
	where $D_h^-$ and $D_h^+$ are both the fifth-order left and
	right biased finite-difference operators, $\alpha:=
	\max_{0\leq i\leq N}\sqrt{g(p_i^\tau)}$, $p_i^\tau=u_x^\tau(t,x_i)+O(\Delta x^6)$ is the common global
	Lax--Friedrichs parameter, and  $Q$ is the optional sixth-order Kreiss--Oliger dissipation operator
	$$
	Q(q_i^\tau)=\frac{\varepsilon_{\mathrm{KO}}}{64\Delta x}
	\left(
	q_{i-3}^\tau
	-6q_{i-2}^\tau
	+15q_{i-1}^\tau
	-20q_i^\tau
	+15q_{i+1}^\tau
	-6q_{i+2}^\tau
	+q_{i+3}^\tau
	\right),
	$$
	Here, $\varepsilon_{\mathrm{KO}}$ denotes the numerical dissipation parameter.  Let \(\mathcal P_h\) denote the linear boundary-value correction
	operator. It leaves the interior grid values unchanged and modifies
	only the boundary values so that the discrete boundary conditions \eqref{b} are satisfied using sixth-order one-sided differences. This results in the following generic Euler step,
	$$
	\mathcal{E}_{\Delta t}^{\tau}[W_h^\tau]:=W_h^{\tau,n+1}=\mathcal{P}_h\left[W_h^\tau+\Delta t\left[\mathcal L_h^\tau(W_h^\tau)\right]_i\right],
	$$
	where the time step $\Delta t=\min
	\left\{0.35\frac{\Delta x}{\alpha},
	\;0.8\min_{\tau<\infty}\tau
	\right\}$ for all simulations. The Euler step above is a first-order method. In order to get a higher-order convergence rate in time, we bootstrap the Euler step above into a third-order Runge–Kutta (Strong Stability Preserving) method,
	$$
	W_h^{\tau,n+1}=
	\frac{1}{3}W_h^{\tau,n}
	+
	\frac{2}{3}
	\mathcal{E}_{\Delta t}^{\tau}
	\left[
	\frac{3}{4}W_h^{\tau,n}
	+
	\frac{1}{4}
	\mathcal{E}_{\Delta t}^{\tau}
	\left[
	\mathcal{E}_{\Delta t}^{\tau}
	\left(W_h^{\tau,n}\right)
	\right]
	\right].
	$$
	
	Finally, we consider the three numerical cases of the time $t$ as follows:
	
	Case1: $t=1$; see Figure \ref{fig1}.
	\begin{figure}[H]
		\centering
		\begin{subfigure}[b]{0.48\textwidth}
			\centering
			\includegraphics[width=3.15in]{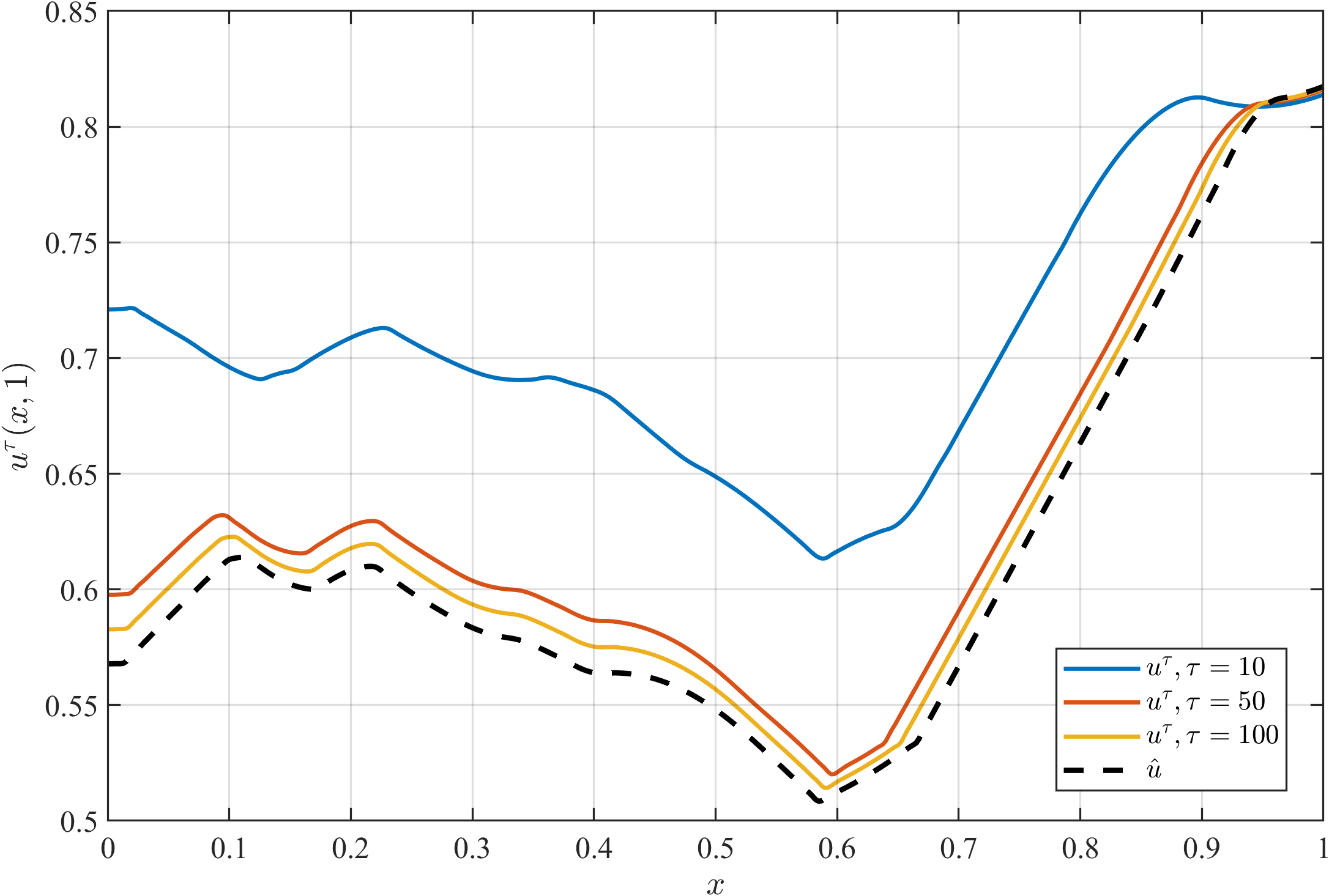}
		\end{subfigure}
		\hfill
		\begin{subfigure}[b]{0.48\textwidth}
			\centering
			\includegraphics[width=3.15in]{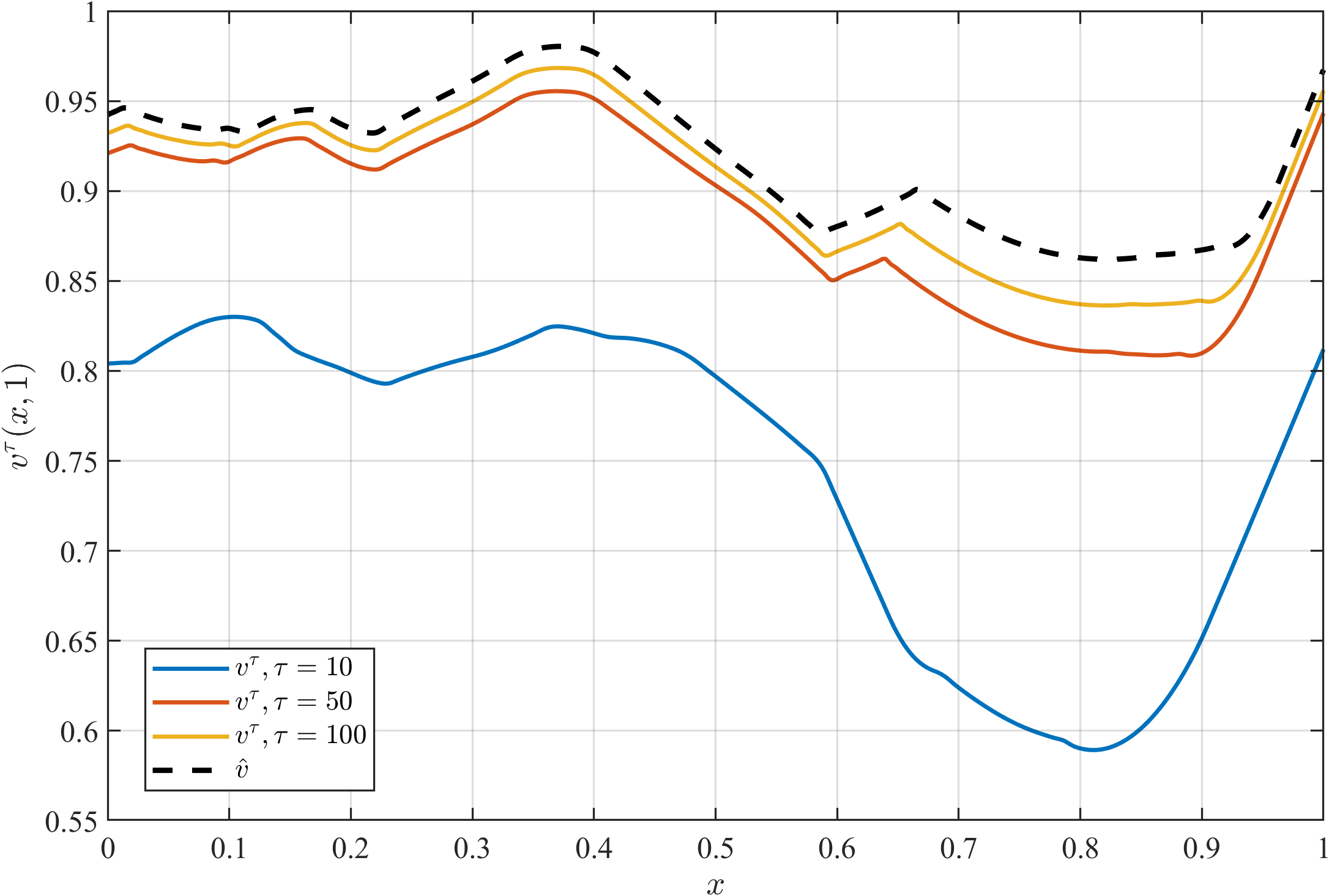}
		\end{subfigure}
		\caption{Case 1 for $(u^{\tau},v^\tau)\to (\hat{u},\hat{v})$ as $\tau\to \infty$ at $t=1$.}
		\label{fig1}
	\end{figure}

	Case 2: $t=10$; see Figure \ref{fig2}.
	
	\begin{figure}[H]
		\centering
		\begin{subfigure}[b]{0.48\textwidth}
			\centering
			\includegraphics[width=3.15in]{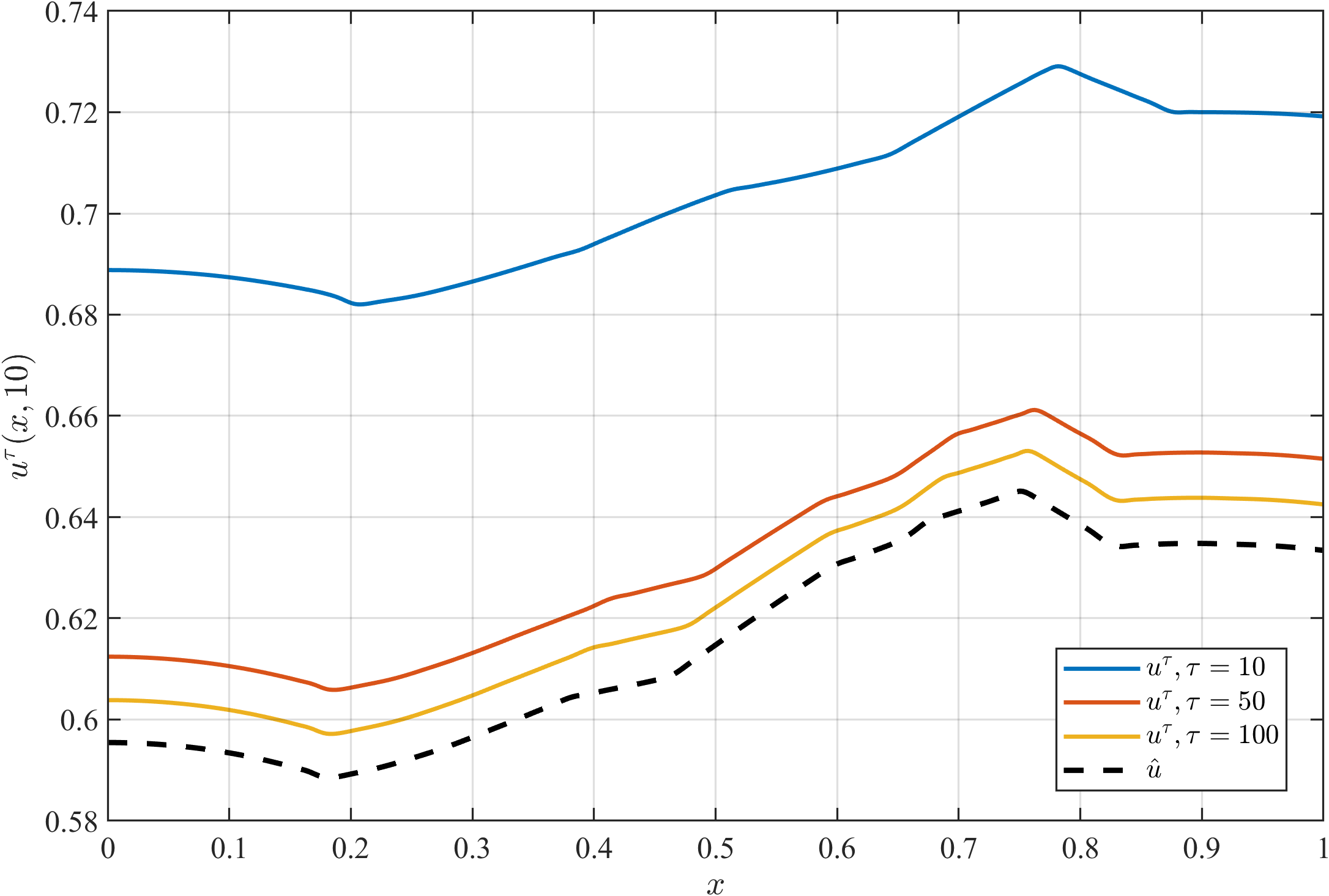}
		\end{subfigure}
		\hfill
		\begin{subfigure}[b]{0.48\textwidth}
			\centering
			\includegraphics[width=3.15in]{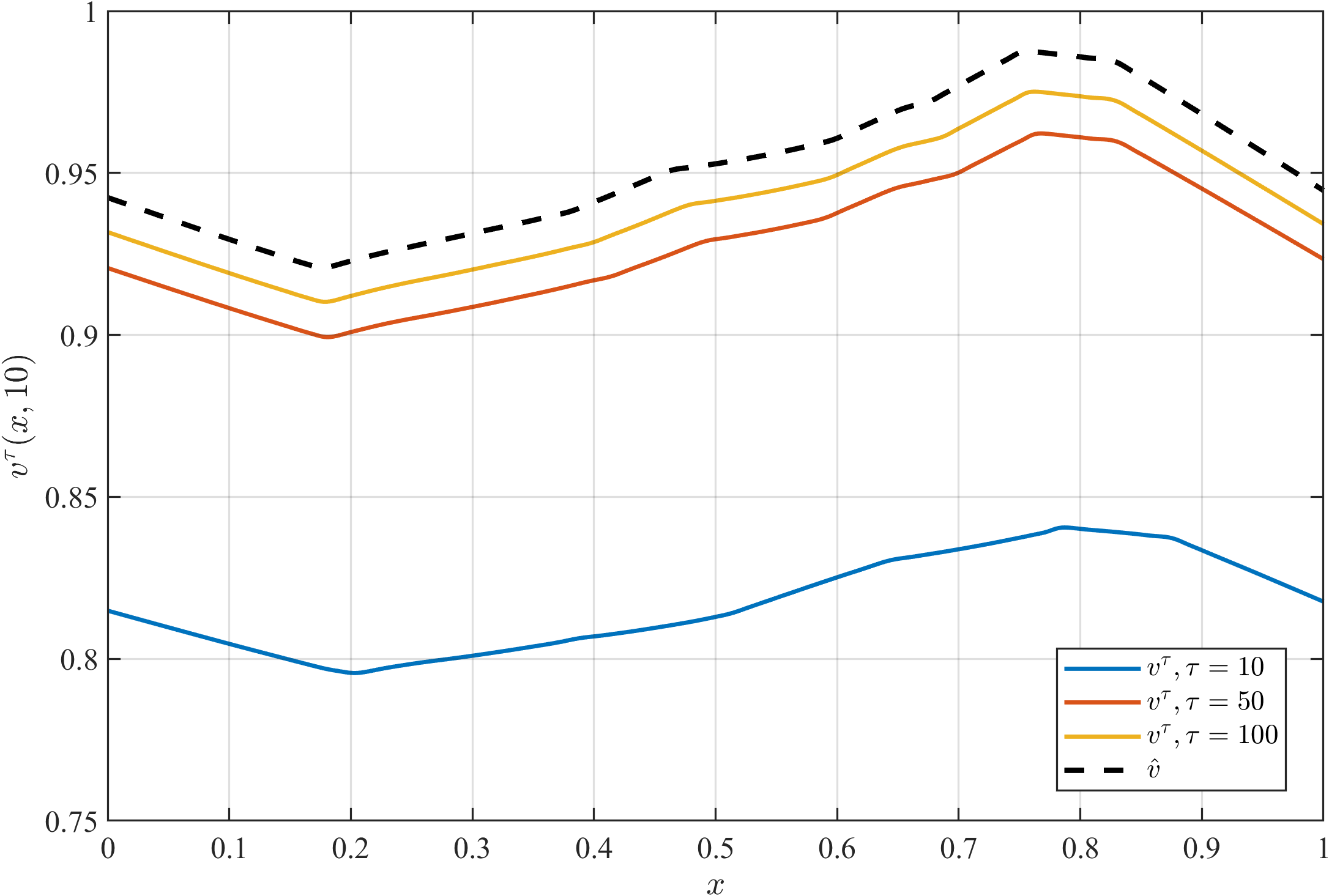}
		\end{subfigure}
		\caption{Case 2 for $(u^{\tau},v^\tau)\to (\hat{u},\hat{v})$ as $\tau\to \infty$ at $t=10$.}
		\label{fig2}
	\end{figure}
	
	Case 3: $t=100$; see Figure \ref{fig3}.
	
	\begin{figure}[H]
		\centering
		\begin{subfigure}[b]{0.48\textwidth}
			\centering
			\includegraphics[width=3.15in]{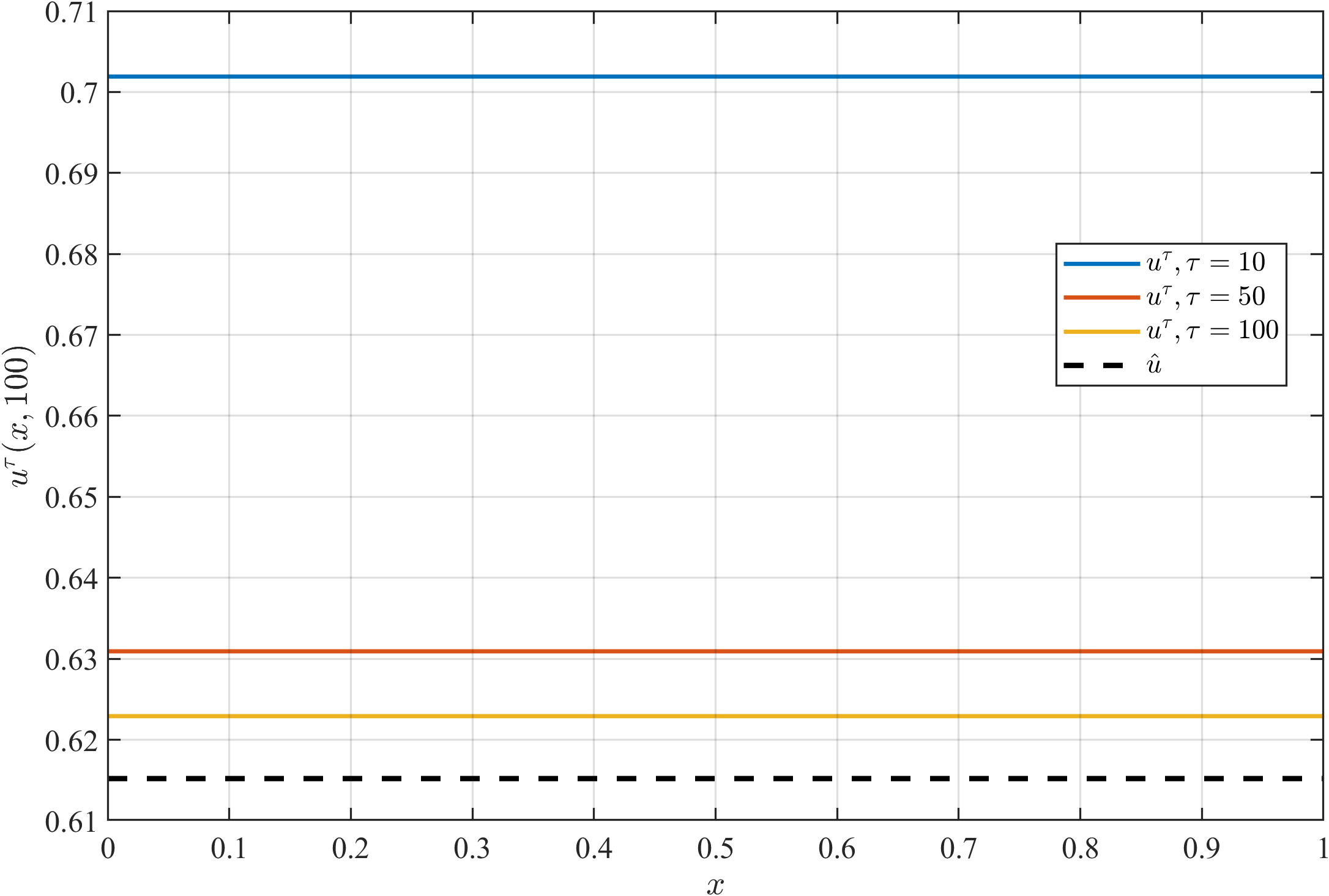}
		\end{subfigure}
		\hfill
		\begin{subfigure}[b]{0.48\textwidth}
			\centering
			\includegraphics[width=3.2in]{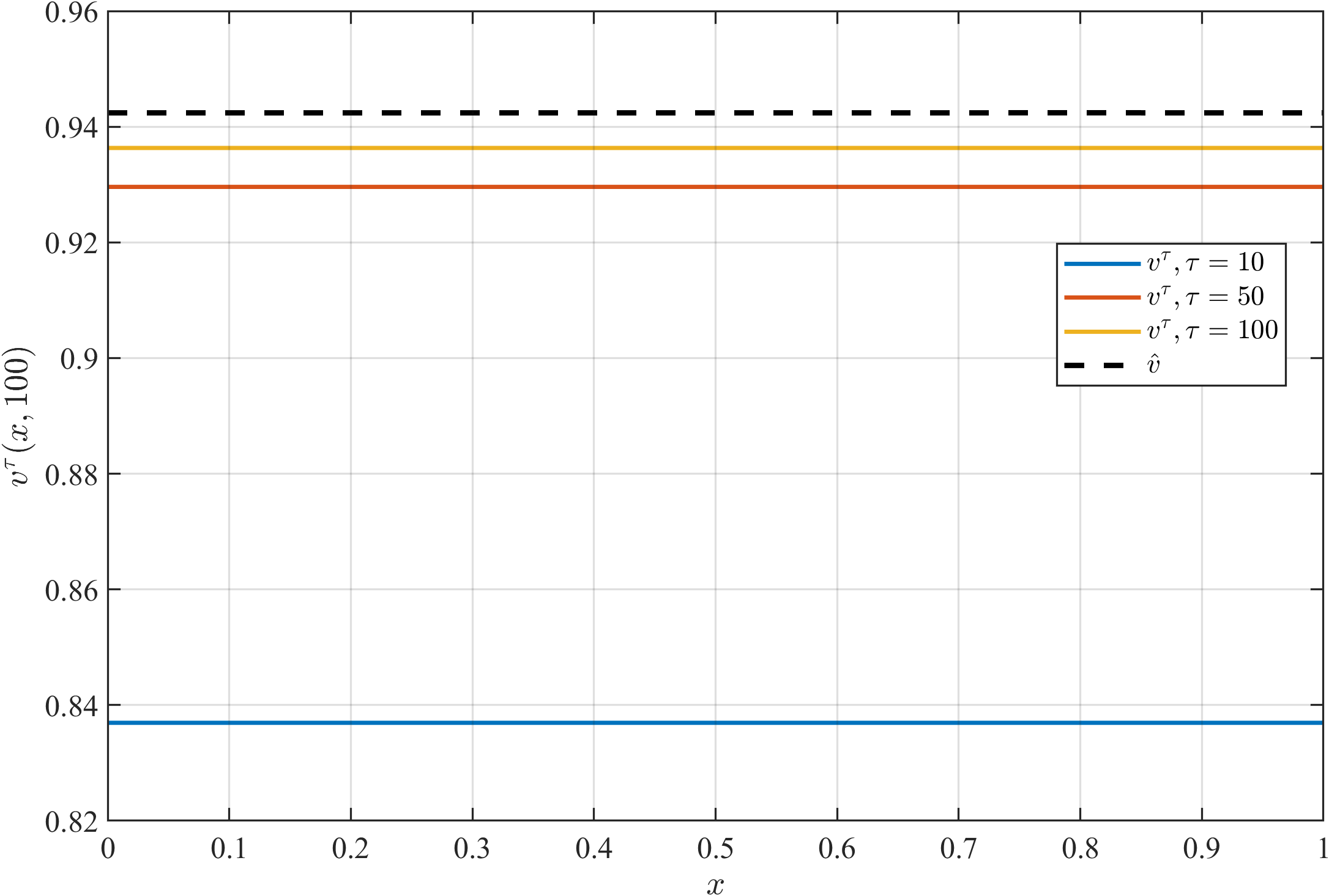}
		\end{subfigure}
		\caption{Case 3 for $(u^{\tau},v^\tau)\to (\hat{u},\hat{v})$ as $\tau\to \infty$ at $t=100$.}
		\label{fig3}
	\end{figure}

	In all the cases above, we choose the finite approximation sequence of relaxation times as $\tau_1=10,\tau_2=50,\tau_3=100$, with the limiting relaxation time $\tau = +\infty$. It can be observed from Figures \ref{fig1}--\ref{fig3} that, as \(\tau\) increases, the  solution \((u^\tau,v^\tau)\) progressively approaches the limiting solution \((\hat u,\hat v)\). In particular, the solution corresponding to \(\tau=100\) is already close to the asymptotic profile. Moreover, as shown in Figure \ref{fig1}, at \(t=1\), the  solutions $(u^\tau,v^{\tau})$ exhibit pronounced spatial variations, characterized by alternating local peaks and troughs in both the vehicle density and vehicle flow rate profiles. Such nonuniform oscillatory structures are characteristic of typical stop-and-go traffic, reflecting the repeated formation and dissipation of locally congested regions.  At \(t=10\), as illustrated in Figure \ref{fig2}, the amplitudes of these spatial oscillations are significantly reduced, and the profiles of both the vehicle density and vehicle flow rate become noticeably smoother, indicating that the stop-and-go waves are gradually becoming more gentle. At  \(t=100\), as shown in Figure \ref{fig3}, the spatial oscillations have essentially disappeared, and both the vehicle density and vehicle flow rate become nearly spatially homogeneous, approaching spatially constant equilibrium states.

	The numerical simulations reveal  two main observations. First, the stop-and-go oscillations gradually diminish over time and eventually vanish, indicating the pronounced stabilizing effect of the intelligent-control boundary, which effectively suppresses such oscillations and promotes the stabilization of the traffic flow. Second, to the best of our knowledge, the formation and suppression of stop-and-go waves in traffic flow models equipped with intelligent-control boundary have not been investigated previously, thereby highlighting a novel feature of the proposed model. All these numerical simulations conducted in this section fully corroborate the theoretical results established in Theorem \ref{thm3}.

	\section{Appendix}
	
	We now provide a brief proof of Theorem \ref{thm2}. To this end, we rewrite \eqref{2a}-\eqref{bb} as the following $p$-Laplacian type nonlinear wave equation:
	\begin{align}\label{A.model}
		\left\{
		\begin{aligned}
			&\bar{u}_{tt}-\left(g(\bar{u}_x)\bar{u}_x\right)_x=0, \quad &x&\in(0,1),\; t>0,\\
			&\bar{u}(x,0)=\bar{u}_0(x),\; \bar{u}_t(x,0)=\bar{u}_1(x), &x&\in(0,1),\\
			&\bar{u}_x(0,t)=0,\; \bar{u}_x(1,t)=-k\bar{u}_t(1,t), &t&>0,
		\end{aligned}
		\right.
	\end{align}
	where $k>0$ and $\bar{u}_1(x):= -\bar{v}_{0x}(x)$. Therefore, to prove Theorem \ref{thm2}, it suffices to consider model \eqref{A.model}.
	
	Similarly to the argument in Section \ref{sec3.1}, we introduce the solution space
	$$
	\bar{X}(0,T) := \left\{ \bar{u}(x,t) : \bar{u}\in C^0([0,T);H^3(0,1)),\; \bar{u}_t\in C^0([0,T);H^2(0,1)) \right\},
	$$
	with $0<T\le+\infty$. 
	
	More specifically, as in the argument in Section \ref{sec3.1}, we need to prove the following proposition.
	
	\begin{Pro}\label{A.pro}
		Let $(\bar{u}_0,\bar{u}_1)\in H^3(0,1)\times H^2(0,1) $. There exists a positive constant $\bar\epsilon$ such that if $\|\bar{u}_{0x}\|_2+\|\bar{u}_1\|_2\le\bar\epsilon$, then problem \eqref{A.model} admits a unique global solution $\bar{u}\in X(0,+\infty)$. Moreover, there exist constants $C>0$ and $\bar\alpha>0$ independent of $t$ such that 
		\begin{align*}
			\|\bar{u}_x\|_2+\|\bar{u}_t\|_2\le Ce^{-\bar\alpha t}\bigl(\|\bar{u}_{0x}\|_2+\|\bar{u}_1\|_2\bigr),
		\end{align*}
		and
		\begin{align*}
			\|\bar{u}\|\le 2\|\bar u_0\|
		\end{align*}
		for any $t>0$.
	\end{Pro}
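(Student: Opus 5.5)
The plan is to repeat the argument of Section~\ref{sec3} with the relaxation terms removed, i.e.\ formally $\tau=\infty$. This only simplifies matters: there is no source $f$, so no bound on $\bar u$ itself is needed to close the higher-order estimates, and the $\tau$-dependent restrictions ($\tau\ge\tau^*$) disappear.

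We combine local existence with a priori estimates through a continuation argument. Local existence in $\bar X(0,T_0)$ is standard, as in Proposition~\ref{local}. We set
\[
\bar N(t):=\sup_{0\le s\le t}e^{\bar\alpha s}\bigl(\|\bar u_x(s)\|_2+\|\bar u_s(s)\|_2\bigr)
\]
and assume a priori that $\bar N(t)\le\bar\epsilon$ on $[0,T]$.

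\textbf{First-order estimate.} Multiply the equation by $\bar u_t$. Using the boundary conditions, this gives
\[
\frac{d}{dt}\Bigl(\tfrac12\|\bar u_t\|^2+\tfrac12\int_0^1 g(\bar u_x)\bar u_x^2\,dx\Bigr)+kg(\bar u_x(1,t))\bar u_t(1,t)^2=\tfrac12\int_0^1 g'(\bar u_x)\bar u_{xt}\bar u_x^2\,dx.
\]
Multiply the equation by $x\bar u_x$, as in \eqref{1q2}, which produces $\frac12\|\bar u_t\|^2+\frac12\int g\bar u_x^2$ on the left. Add $\lambda$ times this second identity to the first, with $\lambda$ from \eqref{lambda}. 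The boundary term $B_1(t)\ge0$ exactly as before, by \eqref{h1}. The cubic terms are bounded by $C\bar N(t)\int g\bar u_x^2$ via \eqref{eng} and absorbed when $\bar\epsilon$ is small. The functional $\mathcal F_1$ is equivalent to $\|\bar u_t\|^2+\|\bar u_x\|^2$ because $\lambda\le\sqrt\delta/2$. So $\frac{d}{dt}\mathcal F_1+c\mathcal F_1\le0$, which yields exponential decay at rate $\bar\alpha_1=\lambda/(8(1+\lambda/\sqrt\delta))$. This step is in fact cleaner than Lemma~\ref{le3.1}, since there is no $\frac1\tau\int x u_xu_t$ term to handle with $\beta$.

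\textbf{Higher orders.} Differentiate once and twice in $t$. The functions $V=\bar u_t$ and $W=\bar u_{tt}$ satisfy $V_{tt}-(h(\bar u_x)V_x)_x=0$ and $W_{tt}-(h(\bar u_x)W_x)_x=(h'(\bar u_x)V_x^2)_x$, with the same damping boundary conditions; here \eqref{hs} guarantees $h\ge\delta$. Repeat the same multiplier pair, multiplying by $V_t$ and $\lambda xV_x$, and likewise for $W$. In the $V$ estimate, all error terms are cubic and carry a factor $\bar N(t)$, so we get decay at rate $\bar\alpha_1$ without any lower-order forcing.

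The main obstacle is the $W$ estimate. The forcing $(h'V_x^2)_x$ contains $V_{xx}=\bar u_{xxt}$, which is not directly part of the $W$-energy. We would handle it as in \eqref{ww}: use the equation for $V$ to write
\[
h V_{xx}=W_t-h'\bar u_{xx}V_x,
\]
which produces the terms $K_3,K_4$ with a small prefactor $\bar N(t)$. These terms are then absorbed after Young's inequality.

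\textbf{Spatial derivatives and closing.} The pure spatial derivatives $\bar u_{xx},\bar u_{xxt},\bar u_{xxx}$ are recovered from the equation solved for $\bar u_{xx}$, as in Lemma~\ref{le3.4}. This gives $\|\bar u_x\|_2+\|\bar u_t\|_2\le Ce^{-\bar\alpha t}(\|\bar u_{0x}\|_2+\|\bar u_1\|_2)$ for any $\bar\alpha<\bar\alpha_1$. The bound on $\bar u$ follows from
\[
\|\bar u\|\le\|\bar u_0\|+\int_0^t\|\bar u_s\|\,ds\le\|\bar u_0\|+C\bar\epsilon\le2\|\bar u_0\|,
\]
where $\bar\epsilon$ is shrunk relative to $\|\bar u_0\|$ exactly as in \eqref{yantuo N(0)}. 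Finally, the continuation argument from the proof of Theorem~\ref{thm4} extends the solution globally.
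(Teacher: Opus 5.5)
Your proposal is correct and follows the paper's argument essentially step for step. Both use continuation from local existence, the multiplier pair $\bar u_t$ and $\lambda x\bar u_x$ with the boundary term kept nonnegative by $\lambda\le 2k\delta/(1+k^2\delta)$, the same multipliers for $V=\bar u_t$ and $W=\bar u_{tt}$ (with $V_{xx}$ eliminated through the $V$-equation, as in $K_3,K_4$), and recovery of $\bar u_{xx},\bar u_{xxt},\bar u_{xxx}$ from the equation. One caveat, which the paper shares: $\|\bar u\|\le 2\|\bar u_0\|$ is obtained only by letting $\bar\epsilon$ depend on $\|\bar u_0\|$ as in the proof of Theorem~\ref{thm4}, so this bound cannot hold with a uniform $\bar\epsilon$ (it fails, for instance, when $\bar u_0\equiv 0$ and $\bar u_1\neq 0$).
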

	
	To prove Proposition \ref{A.pro}, we employ a standard continuation argument based on the interplay between local existence and a priori estimate. To this end, we define
	\[
	\bar{N}(t):=\sup_{0\le s\le t}\left\{e^{\bar{\alpha} s}\|\bar{u}_x(s)\|_2+e^{\bar{\alpha}s}\|\bar{u}_t(s)\|_2\right\},
	\]
	Then, by the Sobolev embedding theorem, we have
	\begin{align}\label{6.2}
		\sup _{s \in[0, t]}\left\{\left\|\bar{u}_x(s)\right\|_{L^{\infty}}+\left\|\bar{u}_{x x}(s)\right\|_{L^{\infty}}+\left\|\bar{u}_s(s)\right\|_{L^{\infty}}+\left\|\bar{u}_{s x}(s)\right\|_{L^{\infty}}\right\} \leq C \bar{N}(t) .
	\end{align}
	
	Moreover, for $\bar{u}$ itself, we obtain
	$$\left\|\bar{u}\right\| \leq\left\|\bar{u}_0\right\|+\int_0^t\left\|\bar{u}_s\right\| d s \leq\left\|\bar{u}_0\right\|+\frac{\bar{N}(t)}{\bar{\alpha}}$$
	Thus, by the Sobolev embedding theorem again
	$$\sup_{s \in[0, t]}\|\bar{u}(s)\|_{L^{\infty}} \leq\left\|\bar{u}_0\right\|+C \bar{N}(t).$$
	
	\begin{proof}[Proof of  Proposition \ref{A.pro}]
		Similar to the argument in Section \ref{sec3},   Proposition \ref{A.pro} can be proved by a continuity argument based on local existence and a priori estimates. Therefore, it suffices to establish the a priori estimate.
		
		We introduce the energy functional
		\begin{align*}
			E_1(t):=\frac12\int_0^1\bigl(\bar{u}_t^2+g(\bar{u}_x)\bar{u}_x^2\bigr)dx,
		\end{align*}
		and the modified energy
		\begin{align*}
			F_1(t):=\bar\lambda\int_0^1 x\bar{u}_x\bar{u}_tdx,
		\end{align*}
		where $\bar\lambda>0$ is a sufficiently small constant to be determined later. Using \eqref{A.model} and \eqref{6.2}, we obtain
		\begin{align}\label{6.8}
			\frac{d}{dt}E_1(t)=&\int_0^1\bar{u}_t\bar{u}_{tt}dx+\int_0^1g(\bar{u}_x)\bar{u}_x\bar{u}_{xt}dx+\frac12\int_0^1g'(\bar{u}_x)\bar{u}_{xx}\bar{u}_x^2dx\nonumber\\
			=&\int_0^1\bar{u}_t\bigl(g(\bar{u}_x)\bar{u}_x\bigr)_xdx+\int_0^1g(\bar{u}_x)\bar{u}_x\bar{u}_{xt}dx+\frac12\int_0^1g'(\bar{u}_x)\bar{u}_{xx}\bar{u}_x^2dx\nonumber\\
			=&-k g(\bar{u}_x(1,t))\bar{u}_t^2(1,t)+\frac12\int_0^1g'(\bar{u}_x)\bar{u}_{xx}\bar{u}_x^2dx\nonumber\\
			\le &- k g(\bar{u}_x(1,t))\bar{u}_t^2(1,t)+\frac{C\bar{N}(t)}{\delta}\int_0^1g(\bar{u}_x)\bar{u}_x^2dx,
		\end{align}
		and
		\begin{align}\label{6.9}
			\frac{d}{dt}F_1(t)=&\bar\lambda\int_0^1x\bar{u}_{xt}\bar{u}_tdx+\bar\lambda\int_0^1x\bar{u}_x\bar{u}_{tt}dx\nonumber\\
			=&\frac{\bar\lambda}{2}\int_0^1x(\bar{u}_t^2)_xdx+\bar\lambda\int_0^1x\bar{u}_x\bigl(g(\bar{u}_x)\bar{u}_x\bigr)_xdx\nonumber\\
			=&\frac{\bar\lambda}{2}\bar{u}_t^2(1,t)-\frac{\bar\lambda}{2}\int_0^1\bar{u}_t^2dx+\bar\lambda\int_0^1xg'(\bar{u}_x)\bar{u}_{xx}\bar{u}_x^2dx+\frac{\bar\lambda}{2}\int_0^1xg(\bar{u}_x)(\bar{u}_x^2)_xdx\nonumber\\
			=&\frac{\bar\lambda}{2}\bigl(1+k^2g(\bar{u}_x(1,t))\bigr)\bar{u}_t^2(1,t)-\frac{\bar\lambda}{2}\int_0^1\bigl(\bar{u}_t^2+g(\bar{u}_x)\bar{u}_x^2\bigr)dx+\frac{\bar\lambda}{2}\int_0^1xg'(\bar{u}_x)\bar{u}_{xx}\bar{u}_x^2dx\nonumber\\
			\le&\frac{\bar\lambda}{2}\bigl(1+k^2g(\bar{u}_x(1,t))\bigr)\bar{u}_t^2(1,t)-\frac{\bar\lambda}{2}E_1(t)+\frac{C\bar\lambda\bar{N}(t)}{\delta}\int_0^1g(\bar{u}_x)\bar{u}_x^2dx,
		\end{align}
		where we have used hypothesis \eqref{h1}. Adding \eqref{6.8} and \eqref{6.9} gives
		\begin{align*}
			&\frac{d}{dt}\bigl(E_1(t)+F_1(t)\bigr)+\frac{\bar\lambda}{2}E_1(t)+\left(kg(\bar{u}_x(1,t))-\frac{\bar\lambda}{2}\bigl(1+k^2g(\bar{u}_x(1,t))\bigr)\right)\bar{u}_t^2(1,t)\\
			&\qquad\le \frac{C(\bar\lambda+1)\bar{N}(t)}{\delta}\int_0^1g(\bar{u}_x)\bar{u}_x^2dx
			\le\frac{\bar\lambda}{4}E_1(t),
		\end{align*}
		provided $\bar{N}(t)$ is sufficiently small. Using \eqref{h1} again,
		\begin{align*}
			F_1(t)=\bar\lambda\int_0^1x\bar{u}_x\bar{u}_tdx\le\frac{\bar\lambda}{\sqrt{\delta}}\int_0^1\sqrt{g(\bar{u}_x)}|\bar{u}_x||\bar{u}_t|dx\le\frac{\bar\lambda}{2\sqrt{\delta}}E_1(t).
		\end{align*}
		Combining the above estimates and choosing
		$$0<\bar\lambda\le\frac{2k\delta}{1+k^2\delta}\le\frac{2kg(\bar{u}_x(1,t))}{1+k^2g(\bar{u}_x(1,t))},$$
		we obtain
		\begin{align*}
			\frac{d}{dt}\bigl(E_1(t)+F_1(t)\bigr)+\frac{\bar\lambda}{4}E_1(t)+\frac{\bar\lambda}{2\bigl(2+\frac{\bar\lambda}{\sqrt{\delta}}\bigr)}F_1(t)
			\le\frac{\bar\lambda}{2\bigl(2+\frac{\bar\lambda}{\sqrt{\delta}}\bigr)}F_1(t)
			\le\frac{\bar\lambda^2}{4\bigl(2\sqrt{\delta}+\bar\lambda\bigr)}E_1(t).
		\end{align*}
		Consequently,
		\begin{align}\label{6.10}
			\frac{d}{dt}\bigl(E_1(t)+F_1(t)\bigr)+\frac{\bar\lambda}{2\bigl(2+\frac{\bar\lambda}{\sqrt{\delta}}\bigr)}\bigl(E_1(t)+F_1(t)\bigr)\le0,
		\end{align}
		since $\frac{\bar\lambda}{4}=\frac{\bar\lambda}{2(2+\frac{\bar\lambda}{\sqrt{\delta}})}+\frac{\bar\lambda^2}{4(2\sqrt{\delta}+\bar\lambda)}$. Applying Gr\"nwall's inequality to \eqref{6.10} yields
		\begin{align*}
			E_1(t)+F_1(t)\le e^{-\alpha^*t}\bigl(E_1(0)+F_1(0)\bigr)\le\Bigl(1+\frac{\bar\lambda}{2\sqrt{\delta}}\Bigr)e^{-\alpha^*t}E_1(0),
		\end{align*}
		where $\alpha^*:=\frac{\bar\lambda}{2(2+\frac{\bar\lambda}{\sqrt{\delta}})}$. Hence,
		\begin{align*}
			E_1(t)\le Ce^{-\alpha^*t}E_1(0)-F_1(t)\le Ce^{-\alpha^*t}E_1(0)+\frac{\bar\lambda}{2\sqrt{\delta}}E_1(t),
		\end{align*}
		which implies
		\begin{align*}
			\Bigl(1-\frac{\bar\lambda}{2\sqrt{\delta}}\Bigr)E_1(t)\le Ce^{-\alpha^*t}E_1(0).
		\end{align*}
		Now set $\bar\lambda=\min\bigl\{\frac{2k\delta}{1+k^2\delta},\sqrt{\delta}\bigr\}$; then
		\begin{align*}
			E_1(t)\le Ce^{-\alpha^*t}E_1(0),
		\end{align*}
		or equivalently,
		\begin{align*}
			\|\bar{u}_x\|^2+\|\bar{u}_t\|^2\le C\bigl(\|\bar{u}_{0x}\|^2+\|\bar{u}_1\|^2\bigr)e^{-\alpha^*t}.
		\end{align*}
		Moreover,
		\begin{align*}
			\|\bar{u}\|^2\le\|\bar{u}_0\|^2+\int_0^t\|\bar{u}_s(\cdot,s)\|^2ds\le \|\bar{u}_0\|^2+C\bigl(\|\bar{u}_{0x}\|^2+\|\bar{u}_1\|^2\bigr).
		\end{align*}
		
		To complete the estimate, we also need higher‑order a priori estimate. As in the argument in Section \ref{sec3.2}, we consider the higher‑order energy functionals
		\begin{align*}
			E_2(t)&:=\frac12\int_0^1\bigl(\bar{V}_t^2+h(\bar{u}_x)\bar{V}_x^2\bigr)dx,\\
			E_3(t)&:=\frac12\int_0^1\bigl(\bar{W}_t^2+h(\bar{u}_x)\bar{W}_x^2\bigr)dx,
		\end{align*}
		and the corresponding modified energies
		\begin{align*}
			F_2(t)&:=\bar\lambda'\int_0^1x\bar{V}_x\bar{V}_tdx,\\
			F_3(t)&:=\bar\lambda''\int_0^1x\bar{W}_x\bar{W}_tdx,
		\end{align*}
		where $\bar{V}:=\bar{u}_t$ and $\bar{W}:=\bar{V}_t=\bar{u}_{tt}$. Repeating the arguments of Lemmas \ref{le3.2}--\ref{le3.3} word by word with $1/\tau=0$ and all the $f$-terms dropped (so that the $1/\tau$-dissipation and the $f$-source terms vanish), we obtain the analogous estimates for $E_2$ and $E_3$.
		
		Combining the first-order bound above with these higher-order estimates and recovering the pure spatial derivatives from the equation $\bar u_{tt}=h(\bar u_x)\bar u_{xx}$ (cf.\ \eqref{71}--\eqref{73} with $1/\tau=0$), we obtain
		\[\|\bar u_{xt}\|+\|\bar u_{tt}\| \le Ce^{-\bar\gamma t}\big(\|\bar u_{0x}\|_1+\|\bar u_1\|_1\big),\]
		\[\|\bar u_{xtt}\|+\|\bar u_{ttt}\|+\|\bar u_{xx}\|+\|\bar u_{xxt}\|+\|\bar u_{xxx}\|\le Ce^{-\bar\gamma t}\big(\|\bar u_{0x}\|_2+\|\bar u_1\|_2\big),\]
		for some $\bar\gamma>0$ (any $\bar\gamma$ smaller than the rate in the first-order estimate). This completes the proof of Proposition \ref{A.pro}.
	\end{proof}

	\begin{proof}[Proof of Theorem \ref{thm2}]
		It suffices to prove the decay estimate \eqref{bar u*}. By Proposition \ref{A.pro}, we have
		\begin{align}\label{A-uxut}
			\|\bar{u}_x\|+\|\bar{u}_t\|\le Ce^{-\bar{\alpha}t}(\|\bar{u}_{0x}\|+\|\bar{u}_1\|).
		\end{align}
		Following the same argument as in the proof of Theorem \ref{thm3}, we define 
		$\bar{u}_*:=\lim\limits_{t\to\infty}\int_0^1\bar{u}dx$, which satisfies
		\begin{align*}
			\left\|\int_0^1\bar{u}dx-\bar{u}_*\right\|\le Ce^{-\bar{\alpha}t}(\|\bar{u}_{0x}\|+\|\bar{u}_1\|).
		\end{align*}
		Then, by Poincar\'e's inequality, we obtain
		\begin{align*}
			\|\bar{u}-\bar{u}_*\|\le \left\|\bar{u}-\int_0^1\bar{u}dx\right\|+\left\|\int_0^1\bar{u}dx-\bar{u^*}\right\|\le Ce^{-\bar{\alpha}t}(\|\bar{u}_{0x}\|+\|\bar{u}_1\|).
		\end{align*}
		
		Finally, for the component $\bar{v}$, combining \eqref{A-uxut} with \eqref{2a} yields
		\begin{align*}
			\|\bar{v}_x\|+\|\bar{v}_t\|\le Ce^{-\bar{\alpha}t}(\|\bar{u}_{0x}\|+\|\bar{u}_1\|).
		\end{align*}
		Similarly, there exists a constant $\bar{v}_*$ such that
		\begin{align*}
			\|\bar{v}-\bar{v}_*\|\le Ce^{-\bar{\alpha}t}(\|\bar{u}_{0x}\|+\|\bar{u}_1\|).
		\end{align*}
		This completes the proof.
	\end{proof}
	
	\medskip
	
	{\bf Acknowledgement} The research of Z. Hu was supported by the Graduate Innovation
	Fund of Jiangxi Province (No. YC2025-B066). The research of S. Li was partially supported by Early-Career Young Scientists and Technologists Project of Jiangxi Province (No.20262BEJ730378) and Science and Technology Research Project of Jiangxi Provincial Department of Education (No.GJJ2500210). The research of M. Mei was partially supported by National Natural Science Foundation of China
(W2431005) and Natural Sciences and Engineering Research
Council of Canada (NSERC RGPIN 2022-03374). The research of Z. Wang was partially supported by the National Natural Science Foundation of China(No.12261047), and Jiangxi Provincial Natural Science Foundation (No.20243BCE51015).

\

{\bf Data Availability Statement} This article has no associated data.

\

{\bf Declarations}

\

{\bf Conﬂict of interest} The authors declare that there are no conﬂicts of interest.

\

{\bf Ethics approval and consent to participate} Not applicable.

\

{\bf Consent for publication} Not applicable.

\

{\bf Materials availability} Not applicable.

\

{\bf Code availability} Not applicable.

\end{document}